\newcommand*{\@old@slash}{}\let\@old@slash\slash
\def\slash{\relax\ifmmode\delimiter"502F30E\mathopen{}\else\@old@slash\fi}
\def\bign#1{\mathclose{\hbox{$\left#1\vbox to8.5\p@{}\right.\n@space$}}\mathopen{}}
\def\Bign#1{\mathclose{\hbox{$\left#1\vbox to11.5\p@{}\right.\n@space$}}\mathopen{}}
\def\biggn#1{\mathclose{\hbox{$\left#1\vbox to14.5\p@{}\right.\n@space$}}\mathopen{}}
\def\Biggn#1{\mathclose{\hbox{$\left#1\vbox to17.5\p@{}\right.\n@space$}}\mathopen{}}
\def\backslash{\delimiter"526E30F\mathopen{}}
\newcommand{\C} {\ensuremath{\mathbb{C}}}
\newcommand{\R} {\ensuremath{\mathbb{R}}}
\newcommand{\Q} {\ensuremath{\mathbb{Q}}}
\newcommand{\Z} {\ensuremath{\mathbb{Z}}}
\newcommand{\I} {\ensuremath{\mathbb{I}}}
\newcommand{\M} {\ensuremath{\mathbb{M}}}
\newcommand{\B} {\ensuremath{\mathbb{B}}}
\renewcommand{\O} {\ensuremath{\textnormal{O}}}
\newcommand{\U} {\ensuremath{\textnormal{U}}}
\renewcommand{\H} {\ensuremath{\mathbf{H}}}
\newcommand{\g} {\ensuremath{\mathfrak{g}}}
\renewcommand{\k} {\ensuremath{\mathfrak{k}}}
\newcommand{\p} {\ensuremath{\mathfrak{p}}}
\renewcommand{\v} {\ensuremath{\mathfrak{v}}}
\newcommand{\su} {\ensuremath{\mathfrak{su}}}
\renewcommand{\sl} {\ensuremath{\mathfrak{sl}}}
\newcommand{\calO} {\ensuremath{\mathcal{O}}}
\newcommand{\calE} {\ensuremath{\mathcal{E}}}
\newcommand{\calL} {\ensuremath{\mathcal{L}}}
\newcommand{\calT} {\ensuremath{\mathcal{T}}}
\newcommand{\calA} {\ensuremath{\mathcal{A}}}
\newcommand{\de} {\ensuremath{\textnormal{d}}}
\newcommand{\Ad} {\ensuremath{\textnormal{Ad}}}
\newcommand{\ad} {\ensuremath{\textnormal{ad}}}
\newcommand{\SL} {\ensuremath{\textnormal{SL}}}
\newcommand{\Sp} {\ensuremath{\textnormal{Sp}}}
\newcommand{\SO} {\ensuremath{\textnormal{SO}}}
\newcommand{\SU} {\ensuremath{\textnormal{SU}}}
\renewcommand{\Sp} {\ensuremath{\textnormal{Sp}}}
\newcommand{\GL} {\ensuremath{\textnormal{GL}}}
\newcommand{\Ric} {\ensuremath{\textnormal{Ric}}}
\newcommand{\trace} {\ensuremath{\textnormal{trace}}}
\newcommand{\rk} {\ensuremath{\textnormal{rk}}}
\newcommand{\Hom} {\ensuremath{\textnormal{Hom}}}
\newcommand{\Isom} {\ensuremath{\textnormal{Isom}}}
\newcommand{\diag} {\ensuremath{\textnormal{diag}}}
\renewcommand{\Re} {\ensuremath{\mathcal{R}e}}
\renewcommand{\Im} {\ensuremath{\mathcal{I}m}}
\newcommand{\Vol} {\ensuremath{\textnormal{Vol}}}
\newcommand{\tX} {\ensuremath{\tilde{X}}}
\newcommand{\tx} {\ensuremath{\tilde{x}}}
\newcommand{\dev}[2] {\ensuremath{\frac{\partial{#1}}{\partial{#2}}}}
\newcommand{\scal}[2] {\ensuremath{\langle #1, #2 \rangle}}
\newcommand{\bigscal}[2] {\ensuremath{\big\langle #1, #2 \big\rangle}}
\newcommand{\Bigscal}[2] {\ensuremath{\Big\langle #1, #2 \Big\rangle}}
\newcommand{\GIT} {\ensuremath{/\!/}}
\newcommand*{\textrightarrow}[1]{\xrightarrow{\mathmakebox[1.5em]{#1}}}
\theoremstyle{plain}
\newtheorem{theorem}{Theorem}[section]
\newtheorem{lemma}[theorem]{Lemma}
\newtheorem{prop}[theorem]{Proposition}
\newtheorem{cor}[theorem]{Corollary}
\newtheorem{conj}[theorem]{Conjecture}
\newtheorem*{theorem*}{Theorem}
\newtheorem*{lemma*}{Lemma}
\newtheorem*{prop*}{Proposition}
\newtheorem*{cor*}{Corollary}
\theoremstyle{remark}
\newtheorem{remark}[theorem]{Remark}
\newtheorem*{remark*}{Remark}
\theoremstyle{definition}
\newtheorem{defn}[theorem]{Definition}
\newtheorem*{defn*}{Definition}
\title[Rigidity of maximal holomorphic representations]{Rigidity of maximal holomorphic representations of Kähler groups.}
\author{Marco Spinaci}
\begin{document}

\begin{abstract}
 We investigate representations of Kähler groups $\Gamma = \pi_1(X)$ to a semisimple non-compact Hermitian Lie group $G$ that are deformable to a representation admitting an (anti)-holomorphic equivariant map. Such representations obey a Milnor--Wood inequality similar to those found by Burger--Iozzi and Koziarz--Maubon. Thanks to the study of the case of equality in Royden's version of the Ahlfors--Schwarz Lemma, we can completely describe the case of maximal holomorphic representations. If $\dim_{\C}X \geq 2$, these appear if and only if $X$ is a ball quotient, and essentially reduce to the diagonal embedding $\Gamma < \SU(n,1) \to \SU(nq,q) \hookrightarrow \SU(p,q)$. If $X$ is a Riemann surface, most representations are deformable to a holomorphic one. In that case, we give a complete classification of the maximal holomorphic representations, that thus appear as preferred elements of the respective maximal connected components.
\end{abstract}

\maketitle

\section{Introduction}

 Consider the character variety of representations of a finitely presented group $\Gamma$ to a Lie group $G$:
 $$
 \M = \Hom(\Gamma, G)^{ss}/G,
 $$
 where $ss$ stands for semisimple, that is, representations $\rho \colon \Gamma \to G$ such that the Zariski closure of the image $\overline{\rho(\Gamma)}$ is reductive. When studying the topology of $\M$, the most basic step is being able to distinguish different connected components; to that aim, one is naturally led to look for characteristic numbers, invariant under continuous deformations of a representation. When $\Gamma = \pi_1(X)$ is the fundamental group of a Kähler manifold $(X, \omega_X)$ and $G$ is of Hermitian type, i.e. $Y = G/K$ has a $G$-invariant Kähler form $\omega_Y$, a natural candidate is the \emph{Toledo invariant}, which is defined as 
 \begin{equation}\label{eqn:toledointro}
  \tau(\rho) = \frac{1}{n!} \int_X f^*\omega_Y \wedge \omega_X^{n-1}.
 \end{equation}
 This has been widely studied in the case $\dim_\C(X) = 1$, i.e., for surface groups. In the case $G = \SL(2,\R)$, this is equivalent to considering the Euler class, and it satisfies the Milnor--Wood inequality, see \cite{Mi58} and \cite{Wo71}. The study of the case of equality in this inequality is due to Goldman, \cite{Go80}. The generalizations of these results for more general Hermitian Lie groups $G$ have formed a very active field of study in the last 30 years: The relevant Milnor--Wood type inequality is due to Domic and Toledo \cite{DoTo87} for most classical groups $G$, and to Clerc and \O rsted in general \cite{ClOr03}. The study of the maximal case has experienced a longer history: Toledo \cite{To89} has analyzed $G = \SU(p,1)$, Hernández \cite{He91} $G = \SU(p,2)$ and Bradlow, García-Prada and Gothen \cite{BrGPGo03} $G = U(p,q)$, where they computed the number of the maximal connected components. They completed the same program considering in a series of consecutive works the other classical groups, see \cite{BrGPGo06} for a survey. More recently, Burger, Iozzi and Wienhard \cite{BuIoWi10} completed the picture with very general results about geometric properties of maximal representations; in particular they proved that maximal representations are faithful, discrete, semisimple and fix a ``tube type'' subdomain.

 When $n = \dim_\C X \geq 2$, the results are much more partial. The Milnor--Wood type inequality has been proved by Burger and Iozzi \cite{BuIo07} in the case where $X$ is locally symmetric, i.e. the universal cover $\tX = G'/K'$ is itself a Hermitian symmetric space, using bounded cohomology techniques. In that case, the Milnor--Wood inequality reads:
 \begin{equation}\label{eqn:MWBI}
  \big|\tau(\rho)\big| \leq \frac{\rk(G)}{\rk(G')} \Vol(X).
 \end{equation}
 In this symmetric case, the study of maximal representations is especially interesting for $G' = \SU(n,1)$, since in higher rank Mostow's superrigidity applies. Because of that, a maximal representation of a higher rank lattice is induced by a ``tight homomorphism'' of $G'$ to $G$, and those have been completely classified (see for example \cite{Ha12}). In the case of rank one, maximal representations are expected to be extremely special, too. Indeed, there is the following conjecture:
 \begin{conj}
  Let $\Gamma < \SU(n,1)$ be a cocompact complex hyperbolic lattice, $n > 1$. Suppose that $\rho \colon \Gamma \to G$ is a maximal representation to a simple Lie group of Hermitian type $G$, i.e. $\tau(\rho) = \rk(G) \Vol(\Gamma \backslash \B^n)$. Then in fact $G = \SU(p,q)$ with $p \geq nq$ and $\rho$ is a ``trivial deformation'' of the standard diagonal embedding
  \begin{equation}\label{eqn:standard}
   \rho_{std} \colon \Gamma < \SU(n,1) \hookrightarrow \SU(nq,q) \hookrightarrow \SU(p,q),
  \end{equation}
  i.e. $\rho(\gamma) = \chi(\gamma) \rho_{std}(\gamma)$ where $\chi \colon \Gamma \to Z_G\big(\rho_{std}(\SU(n,1))\big)$.
 \end{conj}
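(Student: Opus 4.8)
The plan is to attack the conjecture through the harmonic-maps / nonabelian-Hodge circle of ideas, reducing it to the equality case of Royden's version of the Ahlfors--Schwarz lemma; as in the present paper, the step that can be carried out unconditionally requires the equivariant map to be (anti)holomorphic, and that is where I expect the real difficulty to lie.

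\textbf{Producing a holomorphic equivariant map.} Put $\tX=\B^n$ and equip $Y=G/K$ with the Bergman metric normalised so that its Kähler form is the Toledo form $\omega_Y$ of \eqref{eqn:toledointro}. Since the Toledo invariant only depends on the semisimplification, I may assume $\rho$ reductive, so Corlette's theorem gives a harmonic $\rho$-equivariant map $f\colon\B^n\to Y$, which is pluriharmonic by Sampson's theorem because $Y$ has non-positive Hermitian sectional curvature. The crucial point — immediate for $n=1$ by Toledo's argument, but essentially open for $n>1$ — is that maximality should force $f$ to be holomorphic or antiholomorphic. Granting this (equivalently, assuming $\rho$ lies in a component containing a representation with holomorphic equivariant map, which is exactly the standing hypothesis of this paper), and using that $\tau(\rho)$ is a deformation invariant, I may take $f$ itself holomorphic.

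\textbf{Milnor--Wood via Royden's lemma.} With $f$ holomorphic, $\tau(\rho)=\tfrac1{n!}\int_X f^*\omega_Y\wedge\omega_X^{n-1}$, so it suffices to bound the non-negative $(n,n)$-form $f^*\omega_Y\wedge\omega_X^{n-1}$ by $\rk(G)\,\omega_X^{n}$ pointwise. The Bergman metric on the rank-$r$ domain $Y$ has holomorphic sectional curvature pinched between two negative constants whose ratio is $r=\rk(G)$; since $(\B^n,\omega_X)$ has constant holomorphic sectional curvature, Royden's refinement of Ahlfors--Schwarz — which compares holomorphic rather than Riemannian sectional curvatures — yields $f^*\omega_Y\leq\rk(G)\,\omega_X$ as $(1,1)$-forms once the normalisations are matched. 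Wedging with $\omega_X^{n-1}\geq0$ and integrating reproves \eqref{eqn:MWBI} with $G'=\SU(n,1)$.

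\textbf{Rigidity in the equality case.} If $\rho$ is maximal the pointwise inequality is an equality, hence, by continuity of $f$, everywhere. The equality clause of Royden's lemma then forces $f$ to be a totally geodesic holomorphic isometric immersion (up to the fixed scaling) whose differential sends every complex line of $T\B^n$ into a direction along which the holomorphic sectional curvature of $Y$ attains its extremal value; a root-space / Jordan-triple analysis of $T_{f(o)}Y$ — the same mechanism that singles out maximal polydiscs and tube-type subdomains — shows that such totally geodesic holomorphic copies of $\B^n$ occur only for $G=\SU(p,q)$ with $p\geq nq$, and that up to $G$-conjugacy the image is the standard block-diagonal $\B^n\hookrightarrow\SU(nq,q)/S(\U(nq)\times\U(q))\hookrightarrow Y$ underlying $\rho_{std}$ of \eqref{eqn:standard}. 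Finally, $\rho$-equivariance pins $\rho(\Gamma)$ inside the stabiliser of this totally geodesic image, which modulo its pointwise action is, up to isogeny, $\SU(n,1)\times Z_G(\rho_{std}(\SU(n,1)))$; hence $\rho(\gamma)=\chi(\gamma)\,\rho_{std}(\gamma)$ with $\chi$ valued in the centraliser, as asserted. The bottleneck is the holomorphicity step: without it one must control the full pluriharmonic map, and even \eqref{eqn:MWBI} with the sharp constant $\rk(G)$ — let alone its rigidity — becomes substantially harder, which is precisely why the theorem obtained here carries ``holomorphic'' in its hypotheses.
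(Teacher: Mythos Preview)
The statement you are attempting is a \emph{Conjecture} in the paper, not a theorem the paper proves. The paper explicitly says the conjecture is open in general (known only for $\rk(G)\leq 2$ by Koziarz--Maubon and for certain Zariski closures by Pozzetti), and the paper's own contribution is the \emph{conditional} Theorem~\ref{thm:main}, which adds the hypothesis that $\rho$ is deformable to a representation admitting an (anti)holomorphic equivariant map. So there is no ``paper's own proof'' of this statement to compare your proposal against.

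You are aware of this: you say the holomorphicity of the harmonic map is ``essentially open for $n>1$'' and call it ``the bottleneck''. So your proposal is not a proof of the conjecture either; it is an outline of the paper's strategy for the conditional Theorem~\ref{thm:main}, with an honest identification of the missing step. As such, your outline matches the paper closely: Royden's inequality gives the Milnor--Wood bound (Proposition~\ref{prop:MWinequality}); the equality analysis (Lemma~\ref{lemma:roydenequality}) forces $f$ to be an isometric holomorphic immersion whose differential lands in the locus of maximal holomorphic sectional curvature; and a case-by-case matrix analysis of that locus (Lemma~\ref{lemma:matrices}, Section~\ref{sec:prooflemma}) rules out all targets except $\SU(p,q)$ with $p\geq nq$ and pins down the image up to conjugacy, after which equivariance gives $\rho=\chi\cdot\rho_{std}$.

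One technical imprecision: Royden's theorem bounds the \emph{energy density} $e(f)=\sum_\alpha|\lambda_\alpha|^2$, i.e.\ the trace, not the pointwise $(1,1)$-form inequality $f^*\omega_Y\leq\rk(G)\,\omega_X$ (which would bound each $|\lambda_\alpha|^2$ separately). For the Milnor--Wood inequality this does not matter, since for holomorphic $f$ the integrand $f^*\omega_Y\wedge\omega_X^{n-1}$ is proportional to $e(f)\,\omega_X^n$; but the $(1,1)$-form inequality you assert is stronger than what Royden provides and only becomes true \emph{a posteriori} in the equality case, once Lemma~\ref{lemma:roydenequality} forces all the $\lambda_\alpha$ to coincide.
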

 Remark that in the conjecture we restrict without loss of generality to $G$ simple and $\tau(\rho) \geq 0$. Indeed, the Toledo invariant is additive on the factors of a decomposition into irreducible factors of $Y$, and exchanging the complex structure on an irreducible $Y$ changes the sign of $\tau$, hence in fact this covers all the interesting cases. At the present day, the conjecture is only known for $\rk(G) \leq 2$ (and $G \neq \SO^*(10)$), thanks to Koziarz and Maubon \cite{KoMa08}, who used Higgs bundles techniques to reprove \eqref{eqn:MWBI} in this case and to study the equality case. Very recently Pozzetti \cite{Po14} proved the conjecture (without the cocompactness hypotheses) for representations $\rho$ such that the Zariski closure of $\rho(\Gamma)$ does not contain any factor of the form $\SU(k,k)$.
 
 For more general Kähler manifolds, not even a Milnor--Wood inequality is available. The most relevant work here is again due to Koziarz and Maubon \cite{KoMa10}, who considered a slight variation of the Toledo invariant for complex varieties of general type, and were able to replicate the results in \cite{KoMa08} in this broader setting. In this paper we prove that both the inequality and the study of the case of equality can be proved if one knows beforehand that the representation admits a $\rho$-equivariant \emph{holomorphic} (or anti-holomorphic) map:
\begin{theorem}\label{thm:main}
 Let $X$ be a compact Kähler manifold, $n = \dim_\C X$, $\Gamma = \pi_1(X)$ its fundamental group and $\rho \colon \Gamma \to G$ a representation to a noncompact Hermitian Lie group, such that $\rho$ can be deformed to one admitting an (anti)-holomorphic equivariant map. Let $k \leq 0$ be a real number such that the Ricci curvature of $X$ is bounded below by $k$. Then the following Milnor--Wood inequality holds:
 \begin{equation}\label{eqn:MWio}
 \big|\tau(\rho)\big| \leq \tau_{\max} = \frac{-2k}{n+1} \rk(G) \Vol(X).
 \end{equation}
 If, furthermore, $\rho$ is maximal, i.e. equality holds in \eqref{eqn:MWio}, $n > 1$ and $G$ is a simple linear group, then $X$ is a ball quotient, $G = \SU(p,q)$ with $p \geq nq$ and $\rho$ is a trivial deformation of $\rho_{std}$.
\end{theorem}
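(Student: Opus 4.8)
The plan is to reduce to the holomorphic case, run an Ahlfors--Schwarz type estimate to get \eqref{eqn:MWio}, and then exploit the rigidity in the equality case. Since the Toledo number is a continuous deformation invariant (it is the cup product with $[\omega_X]^{n-1}$ of the pullback of a fixed bounded cohomology class of $G$, hence constant along the deformation), I may replace $\rho$ by the representation $\rho'$ in its connected component that admits an equivariant holomorphic or anti-holomorphic map $f\colon\tX\to Y$; composing with complex conjugation on $Y$ when needed I assume $f$ holomorphic, and after possibly exchanging the $G$-invariant complex structure on an irreducible factor I assume $\tau(\rho')\ge 0$. Then $f^*\omega_Y$ is a semipositive $(1,1)$-form, so $f^*\omega_Y\wedge\omega_X^{n-1}\ge 0$ pointwise and $\tau(\rho')=\frac1n\int_X \Lambda_{\omega_X}(f^*\omega_Y)\,\frac{\omega_X^n}{n!}$, where $\Lambda_{\omega_X}$ is the trace with respect to $\omega_X$.

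First I would prove the pointwise estimate. With $\omega_Y$ normalized à la Toledo, the holomorphic sectional curvature of $(Y,\omega_Y)$ is bounded above by a negative constant proportional to $1/\rk(G)$, the extremal (least negative) value being attained along the diagonal of a maximal polydisk; this is exactly the input for Royden's form of the Ahlfors--Schwarz Lemma. Feeding in the hypothesis $\Ric(\omega_X)\ge k$ and applying the maximum principle to the function $\Lambda_{\omega_X}(f^*\omega_Y)$ on the compact $X$ yields the bound $\Lambda_{\omega_X}(f^*\omega_Y)\le \frac{-2kn}{n+1}\rk(G)$ at every point; integrating against $\tfrac{\omega_X^n}{n!}$ gives \eqref{eqn:MWio}, and a direct computation on the diagonal embedding of a ball quotient (equipped with the complex hyperbolic metric rescaled to have $\Ric\equiv k$) checks that the constant $\tau_{\max}$ is sharp.

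Next, the equality case. If $\rho$ — equivalently $\rho'$ — is maximal then the integral inequality is an equality, so the continuous nonnegative function $\frac{-2kn}{n+1}\rk(G)-\Lambda_{\omega_X}(f^*\omega_Y)$ has zero integral and hence vanishes identically. The equality discussion of Royden's lemma then forces $f^*\omega_Y=\lambda\,\omega_X$ with $\lambda$ the extremal constant (so $f$ is a holomorphic homothetic immersion of full rank $n$), forces $f$ to be totally geodesic, and forces $\omega_X$ to have constant holomorphic sectional curvature; since the metric is complete, $\tX=\B^n$ and $X$ is a ball quotient. I would then classify totally geodesic holomorphic homothetic embeddings $\B^n\hookrightarrow Y$ realizing the extremal curvature of $Y$: comparing the curvature tensor of $Y$ restricted to the image with that of $\B^n$ pins down $Y$ as the symmetric space of $\SU(p,q)$ with $p\ge nq$ (here $G$ simple and linear is used to exclude every other irreducible Hermitian $Y$), the embedding being, up to $G$-conjugacy, the one induced by $\C^{n,1}\otimes\C^q\hookrightarrow\C^{p,q}$, i.e.\ $\B^n\hookrightarrow\SU(nq,q)/K\hookrightarrow Y$. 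Finally, since $f$ is totally geodesic and $\rho'$-equivariant, $\rho'(\Gamma)$ lies in the stabilizer of the image, which is the almost direct product of $\rho_{std}(\SU(n,1))$ with its centralizer $Z=Z_G(\rho_{std}(\SU(n,1)))$; projecting gives $\rho'=\chi\cdot\rho_{std}$ with $\chi\colon\Gamma\to Z$.

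It remains to descend from $\rho'$ to $\rho$. The set $R$ of trivial deformations of $\rho_{std}$, i.e.\ the image of $G\times\Hom(\Gamma,Z)\to\Hom(\Gamma,G)$, is closed by a properness argument, and it is open because at such a point the conjugation directions together with the $\chi$-directions already span the Zariski tangent space $H^1(\Gamma,\g_{\Ad\rho})$: this is a Matsushima--Murakami type vanishing computation for the cocompact lattice $\Gamma<\SU(n,1)$, after decomposing $\g$ into $\SU(n,1)$-irreducibles via $\rho_{std}$. Hence $R$ is a union of connected components of $\Hom(\Gamma,G)$; as $\rho$ lies in the component of $\rho'\in R$, also $\rho\in R$, which is the assertion. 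I expect the main obstacle to be the equality case of Royden's Ahlfors--Schwarz Lemma — extracting from pointwise equality that $f$ is totally geodesic of full rank and that the domain metric is genuinely complex hyperbolic — with the curvature-tensor classification identifying $Y=\SU(p,q)$ a close second.
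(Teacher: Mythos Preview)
Your approach to the inequality and to analysing the holomorphic representation $\rho'$ is essentially the paper's: deformation invariance of $\tau$, Royden's Ahlfors--Schwarz bound, and the equality discussion in Royden to get that $f$ is a totally geodesic homothetic immersion landing in the maximal holomorphic sectional curvature locus of $Y$. Your deduction that $\tX=\B^n$ from constant induced holomorphic sectional curvature is fine, and the identification of $Y$ with $\SU(p,q)/K$, $p\ge nq$, indeed comes from classifying complex $n$-planes (with $n\ge 2$) contained in that extremal locus; in the paper this is a case-by-case matrix computation over the classical Hermitian groups (Lemma~\ref{lemma:matrices} and Lemma~\ref{lemma:curvatureequality}), not a curvature-tensor comparison, but your sketch points at the same statement.

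The genuine gap is the last step, the descent from $\rho'$ to $\rho$. You propose to show that the set $R$ of trivial deformations of $\rho_{std}$ is open by a Matsushima--Murakami type vanishing for $H^1(\Gamma,\g_{\Ad\rho})$, with $\Gamma<\SU(n,1)$. This is not available off the shelf: decomposing $\su(p,q)$ under $\SU(n,1)$ via $\rho_{std}$ produces, besides the centraliser and copies of the adjoint, irreducible pieces built from the standard representation of $\SU(n,1)$, and for complex hyperbolic lattices the corresponding $H^1$ does \emph{not} vanish in general. Establishing such a vanishing here is precisely a local superrigidity statement of the kind the theorem is meant to prove, so invoking it is circular or at best an unjustified claim. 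The paper circumvents this entirely with a Higgs-bundle open--closed argument (Lemma~\ref{lemma:holomorphiccomponent}): for maximal representations with $n\ge 2$, holomorphicity of $f$ is equivalent, via a remark of Koziarz--Maubon, to injectivity of the Higgs map $\beta\colon W\otimes T^{1,0}X\to V$; injectivity of $\beta$ is an open condition on semisimple representations, while holomorphicity is the condition $E(\rho)=n\,\tau_{\max}$, a closed condition by properness of $E$. Hence every semisimple representation in the component of $\rho'$ is already holomorphic, and a separate Burger--Iozzi argument rules out non-semisimple maximal representations. That mechanism, not a cohomological vanishing, is what makes the descent work.
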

 The inequality \eqref{eqn:MWio} follows directly from a result by Royden (see \cite{Ro80}, Theorem 1). The study of the case of equality descends from the study of equality in its inequality, together with a careful study of the geometry of the classical Lie groups of Hermitian type. The techniques are not new: They have been used by Koziarz and Maubon in \cite{KoMa08}, in the case where $\tX = \B^n$ and $G = \SU(p,q)$, where an easier version of Royden's theorem applies (see loc. cit. Theorem 2). For manifolds endowed with a Kähler-Einstein metric, the inequality \eqref{eqn:MWio} is compatible with the one in \cite{KoMa10}, but not as strict as \eqref{eqn:MWBI} (indeed, the identity morphism for higher rank groups realizes the equality in \eqref{eqn:MWBI}, but that is clearly not true in \eqref{eqn:MWio}). Also remark that some of the results of this theorem are reminiscent of the ones obtained by Eyssidieux in \cite{Ey99}, pages 84 ff., studying the case of equality in Arakelov inequalities. Also remark that this theorem gives a different proof of a part of the statements in \cite{Ha11}: In particular, together with \cite{Ha12}, this gives
 \begin{cor}\label{cor:standard}
  Let $\Gamma < \SU(n,1)$, $n > 1$, be a complex hyperbolic cocompact lattice. Every maximal representation that can be deformed to one that factors through a representation of $\SU(n,1)$,
  $$
   \rho \colon \Gamma \hookrightarrow \SU(n,1) \to G
  $$
 is a trivial deformation of the standard one \eqref{eqn:standard}.
 \end{cor}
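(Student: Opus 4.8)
The plan is to recognize these representations as an instance of Theorem~\ref{thm:main} by verifying its hypothesis. Let $\rho_0$ be the representation to which $\rho$ deforms, factored as $\Gamma \overset{\iota}{\hookrightarrow} \SU(n,1) \overset{\phi}{\to} G$ for the lattice embedding $\iota$ and a homomorphism of Lie groups $\phi$. Since the Toledo invariant is invariant under continuous deformations, $\tau(\rho) = \tau(\rho_0)$, so $\rho_0$ is maximal too; the hypotheses of Theorem~\ref{thm:main} concerning $X$ are met automatically, $X = \Gamma\backslash\B^n$ being a compact ball quotient with negative Einstein Ricci curvature, and the conclusion that $X$ be a ball quotient is trivially true. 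Hence it suffices to produce an (anti)-holomorphic $\rho_0$-equivariant map $\tX = \B^n \to Y = G/K$: Theorem~\ref{thm:main} then applies to $\rho$ and forces $G = \SU(p,q)$ with $p \geq nq$ and $\rho$ to be a trivial deformation of $\rho_{std}$.

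For the equivariant map I would take the totally geodesic orbit map attached to $\phi$: after conjugating $\phi$ so that $\phi(K') \subset K$ for a maximal compact subgroup $K' < \SU(n,1)$, the assignment $gK' \mapsto \phi(g)K$ is a well-defined, totally geodesic, $\phi$-equivariant — hence $\rho_0$-equivariant — map $f \colon \B^n \to Y$. The only point requiring work is that $f$, possibly after precomposition with the standard antiholomorphic automorphism of $\B^n$, is holomorphic. Here I would invoke the classification of tight homomorphisms of $\SU(n,1)$ into Hermitian Lie groups in~\cite{Ha12}: because $\iota(\Gamma)$ is a lattice, the number $\tau(\rho_0)/\Vol(X)$ depends only on the conjugacy class of $\phi$ — it records the constant $c$ with $f^*\omega_Y = c\,\omega_{\B^n}$, forced by irreducibility of $\B^n$ as a symmetric space — and is extremal precisely when $\phi$ is tight; so maximality of $\rho_0$ amounts to tightness of $\phi$, and the classification shows that every tight homomorphism of $\SU(n,1)$ is of holomorphic type, with $f$ consequently (anti)-holomorphic.

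The step I expect to be the main obstacle is exactly this last translation: that ``maximal on $\Gamma$'' forces ``tight on $\SU(n,1)$'', and that the list in~\cite{Ha12} genuinely yields a complex (or conjugate-complex) totally geodesic image and not merely a totally geodesic one — in a Hermitian symmetric space a totally geodesic subspace need be neither complex nor totally real, so holomorphicity is real input, not bookkeeping. An alternative to quoting~\cite{Ha12} for this would be to observe that $\rho_0$ is reductive (its image is Zariski-dense in the semisimple group $\phi(\SU(n,1))$), hence admits a harmonic and therefore pluriharmonic equivariant map, and to read off holomorphicity in the equality case from the proof of~\eqref{eqn:MWio} behind Theorem~\ref{thm:main}; but the shortest path is the combination with~\cite{Ha12} (and~\cite{Ha11}), which has the added benefit of matching the resulting trivial deformations of $\rho_{std}$ with the explicit ones in that classification.
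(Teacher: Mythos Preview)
Your proposal is correct and follows essentially the same route as the paper: both take the totally geodesic orbit map induced by $\phi$, argue that maximality of $\rho_0$ forces $\phi$ (equivalently $f$) to be tight, invoke \cite{Ha12} to conclude $f$ is (anti)-holomorphic, and then apply Theorem~\ref{thm:main}. The only cosmetic difference is in the ``maximal $\Rightarrow$ tight'' step: the paper phrases it via the Van~Est isomorphism and the one-dimensionality of $H^2_{cb}(\SU(n,1),\R)$ to write $f^*\kappa_Y^b = \lambda\,\kappa_{\B^n}^b$ and then reads off $\tau(\rho)=\lambda\,\Vol(X)$, whereas you use the equivalent fact that $\SU(n,1)$-invariant $2$-forms on $\B^n$ are one-dimensional to get $f^*\omega_Y = c\,\omega_{\B^n}$---the content is the same.
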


 The above techniques also give non-trivial results when $n=1$. In this case, thanks to the series of works by Bradlow, García-Prada and Gothen (see \cite{BrGPGo06} and the references therein), we know that being deformable to an (anti)-holomorphic representation is not very restrictive: For $G = \SU(p,q)$ or $\SO^*(2n)$ or $\SO_0(n,2)$, with $n \geq 4$, this is always the case; for $G = \Sp(2n,\R)$, $n \geq 3$, this is true unless $\rho$ is in the ``Hitchin component''. Since (anti)-holomorphic representations can be characterized as minimizers of the Morse function (see Proposition \ref{prop:energyinequality} for the precise statement), these are always the minima in each connected components. In the Hitchin case for $\Sp(2n, \R)$, such minima are represented by Fuchsian representations, see \cite{GPGoMR13}. The authors have used such results to compute the number of maximal connected components. Using the same techniques as in Theorem \ref{thm:main}, and making use of the above holomorphicity result for the minima in these connected components, we give an alternative way to compute such a number, together with an explicit classification of the representations realizing those minima.

\begin{theorem}\label{thm:classification}
 Let $\Gamma_g = \pi_1(\Sigma_g)$ be a surface group of genus $g \geq 2$. Let $\rho \colon \Gamma \to G$ be a maximal representation, where $G$ is either $\SU(p,q)$, $\SO^*(2n)$ or $\SO_0(n, 2)$, $n \geq 4$. Then $\rho$ can be deformed to one of the holomorphic representations $\rho_{tot}$ in Table \ref{tab:representations} or to one of its ``trivial deformations''. These differ from $\rho_{tot}$ by multiplication for a $\chi \colon \Gamma_g \to Z$, where $Z$ is the centralizer of $\rho_{tot}(\SL_2(\R))$ as in Table \ref{tab:centralizers}. If $G = \Sp(2n,\R)$, $n \geq 3$, either the above is true, or $\rho$ is in the Hitchin component, hence it can be deformed to a Fuchsian representation.
\end{theorem}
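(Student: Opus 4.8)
The plan is to reduce the classification to \emph{holomorphic} maximal representations and then to run the equality case of Royden's lemma exactly as in the $n>1$ half of Theorem~\ref{thm:main}, now specialised to $\dim_\C X = 1$.

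\emph{Reduction to holomorphic representations.} By the works of Bradlow, García-Prada and Gothen (see \cite{BrGPGo06} and the references therein), for $G = \SU(p,q),\ \SO^*(2n)$ or $\SO_0(n,2)$ with $n\geq 4$ every representation to $G$ can be deformed to an (anti)-holomorphic one, and for $G = \Sp(2n,\R)$, $n\geq 3$, the same holds outside the Hitchin component; moreover, by Proposition~\ref{prop:energyinequality} the (anti)-holomorphic representations are precisely the minima of the energy functional on each connected component. Hence, following the gradient flow, a maximal $\rho$ (not in the Hitchin component when $G = \Sp(2n,\R)$) lies in the same component as an (anti)-holomorphic $\rho_{tot}$ realising the minimum of the energy there; replacing the $G$-invariant complex structure on $Y = G/K$ by its conjugate if necessary only changes the sign of $\tau$ (see the remark after the Conjecture), so we may take $\rho_{tot}$ holomorphic, and since $\tau$ is locally constant $\rho_{tot}$ is again maximal. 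If instead $\rho$ is in the Hitchin component of $\Sp(2n,\R)$, by \cite{GPGoMR13} the minima there are the Fuchsian representations $\Gamma_g\to\SL_2(\R)\hookrightarrow\Sp(2n,\R)$ through the principal $\SL_2$, which is the last assertion of the theorem.

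\emph{Analysis of $\rho_{tot}$.} Give $\Sigma_g$ its hyperbolic metric, so that $k = -1$ is the sharp lower Ricci bound; maximality then forces equality in \eqref{eqn:MWio}, hence pointwise equality in Royden's form of the Ahlfors--Schwarz Lemma for a $\rho_{tot}$-equivariant holomorphic map $f\colon\mathbb{H}\to Y$ out of the universal cover of $\Sigma_g$. From the equality case — the same rigidity input used in Theorem~\ref{thm:main} — I expect to deduce that $f$ is a homothetic totally geodesic holomorphic embedding whose image $\Delta := f(\mathbb{H})$ is a diagonal disk inside a maximal polydisk $(\mathbb{H})^{\rk(G)}\subset Y$. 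Equivariance then gives $\rho_{tot}(\Gamma_g)\cdot\Delta = \Delta$, so $\rho_{tot}$ factors as $\iota\circ\rho_{\mathrm{F}}$, where $\rho_{\mathrm{F}}\colon\Gamma_g\to\SL_2(\R)$ is the Fuchsian holonomy of the hyperbolic structure on $\Delta$ and $\iota\colon\SL_2(\R)\hookrightarrow G$ is the diagonal embedding into a maximal polydisk; consequently every minimum of a maximal component has the form $\chi\cdot\rho_{tot}$ with $\chi\colon\Gamma_g\to Z := Z_G(\iota(\SL_2(\R)))$, i.e. a trivial deformation. Carrying this out for each of the bounded symmetric domains attached to $\SU(p,q),\ \SO^*(2n),\ \SO_0(n,2),\ \Sp(2n,\R)$ — using their explicit descriptions and those of their maximal polydisks and tube-type subdomains, distinguishing the relevant rank and parity cases — yields the finite list of models $\rho_{tot}$ of Table~\ref{tab:representations} and, computing the $Z$'s, Table~\ref{tab:centralizers}; comparison with the counts of maximal components recalled in \cite{BrGPGo06} then shows that these are all of them.

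\emph{Main obstacle.} As for Theorem~\ref{thm:main}, the crux is the geometric part of the previous paragraph: turning the \emph{pointwise} equality produced by Royden's estimate into the \emph{global} rigidity statement that $f$ is totally geodesic onto a diagonal disk of a maximal polydisk (this is where the case of equality in the Ahlfors--Schwarz--Royden Lemma is genuinely used), and then the explicit, somewhat lengthy book-keeping — one classical group at a time — that pins down which diagonal holomorphic disks arise, the corresponding embeddings $\SL_2(\R)\hookrightarrow G$, and their centralizers. The reduction to the holomorphic case and the $\Sp(2n,\R)$ Hitchin alternative are, by contrast, essentially formal consequences of the cited results of Bradlow--García-Prada--Gothen and of \cite{GPGoMR13}.
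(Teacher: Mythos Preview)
Your proposal follows essentially the same route as the paper: reduce to holomorphic minima via Bradlow--Garc\'ia-Prada--Gothen, invoke the equality case of Royden (Lemma~\ref{lemma:roydenequality}) to make $f$ totally geodesic with tangent in the locus $\calL$ of maximal holomorphic sectional curvature, and then run a case-by-case analysis. Two points deserve more care than your sketch gives them.

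First, your factorization step is slightly off: from $\rho_{tot}(\Gamma_g)\cdot\Delta=\Delta$ you only get that the holomorphic minimum lands in the \emph{stabilizer} of $\Delta$, which is $\iota(\SL_2(\R))\cdot Z$, not in $\iota(\SL_2(\R))$ itself --- this is precisely where the $\chi$ comes from, so saying ``$\rho_{tot}$ factors as $\iota\circ\rho_{\mathrm F}$'' and then afterwards introducing $\chi\cdot\rho_{tot}$ double-counts. The paper avoids this by letting $\rho_{tot}\colon\SL_2(\R)\to G$ denote the Lie-group homomorphism extending the totally geodesic $f$ and writing the holomorphic minimum as $\rho(\gamma)=\chi(\gamma)\rho_{tot}(\gamma)$ from the start.

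Second, the paper does not invoke maximal polydisks but works directly with the matrix description of $\calL$ furnished by Lemma~\ref{lemma:curvatureequality}; more importantly, a step you gloss over is verifying that $K$ acts \emph{transitively} on the set of admissible tangent lines in $\calL$. This transitivity is what ensures that all such totally geodesic disks are $G$-conjugate to the single model in Table~\ref{tab:representations}, and the paper checks it explicitly in each of the four cases (for $\Sp(2n,\R)$ and $\SO^*(2n)$ this requires a short but non-obvious computation with $\Ad_{\exp}$). Without it your ``finite list'' could a priori contain several non-conjugate models per group.
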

\begin{table}[ht]
 \caption{Canonical representatives of maximal holomorphic representations (convention: $a, b, c, d,$ are real, $\alpha, \beta$ are complex; the use $\SL_2(\R)$ or $\SU(1,1)$ is deduced from the notation).}
 \label{tab:representations}
\begin{adjustwidth}{-0.9in}{-0.9in}
\small
 \begin{tabular}{m{0.4in} m{2.7in} c}
  \centering$G$ & \centering $f_* \colon \sl_2(\R) \cong \su(1,1) \to \g$ & $\rho_{tot} \colon \SL_2(\R) \cong \SU(1,1) \to G$\\[0.2cm]
  \midrule
  \centering $\SU(p,q)$, $p \geq q$ & \centering $\begin{pmatrix}ia&\beta\\\bar\beta&-ia\end{pmatrix} \mapsto \begin{pmatrix} iaI_q&0&\beta I_q\\0&I_{p-q}&0\\
                                                                                  \bar\beta I_q&0&-iaI_q
                                                                                 \end{pmatrix}$
                                                                                 &
                                                                                 $\begin{pmatrix}
                                                                                   \alpha & \beta\\
                                                                                   \bar\beta&\bar\alpha
                                                                                  \end{pmatrix} \mapsto
                                                                                  \begin{pmatrix}
                                                                                   \alpha I_q & 0 & \beta I_q\\
                                                                                   0 & I_{p-q} & 0\\
                                                                                   \bar\beta I_q & 0 & \bar\alpha I_q
                                                                                  \end{pmatrix}$\\[0.7cm]
 \centering $\Sp(2n,\R)$, $n \geq 3$ & \centering $\begin{pmatrix}
                              a&b\\
                              c&-a
                             \end{pmatrix}
                             \mapsto
                             \begin{pmatrix}
                              a I_n & b I_n\\
                              c I_n & -a I_n
                             \end{pmatrix}$
                          & $\begin{pmatrix}
                              a&b\\
                              c&d
                             \end{pmatrix}
                             \mapsto
                             \begin{pmatrix}
                              a I_n & b I_n\\
                              c I_n & d I_n
                             \end{pmatrix}$\\[0.5cm]
 \centering $\SO_0(n,2)$, $n \geq 4$ & \centering $\begin{pmatrix}
                             ia & b-ic\\
                             b+ic & -ia
                            \end{pmatrix}
                            \mapsto
                            \begin{pmatrix}
                             0 & 0 & \dots & 2b & 2c\\
                             0 & 0 & \dots & 0 & 0\\
                             \vdots & \vdots & \ddots & \vdots & \vdots\\
                             2b & 0 & \dots & 0 & 2a\\
                             2c & 0 & \dots & -2a & 0
                            \end{pmatrix}$
                          & $\begin{array}{c}\begin{pmatrix}
                              \alpha & \beta\\
                              \bar\beta & \bar\alpha
                             \end{pmatrix}
                             \mapsto\\
			     \begin{pmatrix}
			      2|\beta|^2+1 & 0 & 2 \Re(\alpha\bar\beta) & 2 \Im(\alpha \bar\beta)\\
			      0 & I_{n-1} & 0 & 0\\
			      2 \Re(\alpha\beta) & 0 & \Re(\alpha^2 + \beta^2) & \Im(\alpha^2 + \beta^2)\\
			      -2 \Im(\alpha\beta) & 0 & -\Im(\alpha^2 + \beta^2) &\Re(\alpha^2-\beta^2)
			     \end{pmatrix}\end{array}$\\[1.2cm]
 \centering $\SO^*(2n)$ & \centering $\begin{pmatrix}
                 ia & b-ic\\
                 b+ic & -ia
                \end{pmatrix}
                \mapsto
		\begin{pmatrix}
		 ibJ & a I_n + ic J\\
		 -aI_n + icJ & -ibJ
		\end{pmatrix}
		$,
		$J = \begin{pmatrix}
		      0 & I_n\\
		      -I_n & 0
		     \end{pmatrix}$
 \end{tabular}
\normalsize
\end{adjustwidth}
\end{table}

\begin{table}[ht]
 \caption{Centralizers and number of connected components}
 \label{tab:centralizers}
\small
  \begin{tabular}{m{0.4in} m{3.2in} p{0.75in}}
   \centering $G$ & \centering $Z_G(\rho_{tot}(\SL_2(\R)))$ & Number of connected components\\
   \midrule
   $\SU(p,p)$ & \centering $\Bigg\{ \begin{pmatrix}U&0\\0&U\end{pmatrix} : U \in U(p), \det(U) = \pm1 \Bigg\} \cong U(p) \rtimes \Z/2\Z$
              & $2^{2g}$\\[0.6cm]
   $\SU(p,q)$, $p > q$ & \centering $\Bigg\{ \begin{pmatrix}U&0&0\\0&F&0\\0&0&U\end{pmatrix} : \begin{array}{l} U \in U(q), F \in U(p-q),\\ \det(U)^2\det F =1 \end{array}\Bigg\}$
              & 1\\[0.8cm]
   $\Sp(2n, \R)$, $n \geq 3$ & \centering $\Bigg\{ \begin{pmatrix} Q & 0\\0& Q\end{pmatrix} : Q \in O(n) \Bigg\}$
              & $\begin{array}{l} 2^{2g+1} \text{(plus }2^{2g}\\ \text{Hitchin ones}\\ \text{\cite{BrGPGo06})}\end{array}$\\[0.4cm]
   $\SO_0(n,2)$, $n \geq 4$ & \centering $\Bigg\{ \begin{pmatrix} \det P\\&P\\&&\det P\\&&&\det P \end{pmatrix} : P \in O(n-1) \Bigg \}$
              & $2^{2g+1}$\\[1cm]
   $\SO^*(2n)$ & \centering $\Sp(n) \subset U(n) \subset \SO^*(2n)$ & 1
  \end{tabular}
\normalsize
\end{table}

We remark here that the ``Cayley correspondence'' discovered by Bradlow, García-Prada and Gothen, relating representations (or Higgs bundles) in the symplectic or orthogonal group (see, for example, \cite{BrGPGo13}) is reflected in Table \ref{tab:centralizers} by the centralizers of these representations.

Finally, remark that Theorem \ref{thm:main} seems to suggest that $\tau(\rho)$ should always vanish if the Ricci curvature of $X$ is non-negative. This is indeed the case, as it follows from a recent theorem of Biswas and Florentino \cite{BiFl14} (together with the fact, due to Milnor \cite{Mi58}, that non-negative Ricci curvature implies virtually nilpotent fundamental group):
\begin{prop}\label{prop:nilpotent}
 Let $X$ be a compact Kähler manifold such that $\Ric(X) \geq 0$ or, more generally, such that $\pi_1(X)$ is virtually nilpotent. Then for every Hermitian Lie group $G$ and any $\rho \colon \Gamma \to G$, $\tau(\rho) = 0$.
\end{prop}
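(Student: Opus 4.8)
The plan is to reduce the statement to the vanishing of a single cohomology class and then to use amenability. If $\Ric(X)\geq 0$, then by Milnor's theorem \cite{Mi58} the group $\Gamma=\pi_1(X)$ is virtually nilpotent, so it is enough to prove the statement under that hypothesis; in particular $\Gamma$ is amenable. Now recall that $\tau(\rho)$ depends on $\rho$ only through the de~Rham class $\kappa_\rho\in H^2(X;\R)$ of the closed $2$-form $f^{*}\omega_Y$, which descends to $X$ because $\omega_Y$ is $G$-invariant and $f$ is $\rho$-equivariant; here $f\colon\tX\to Y$ is any continuous $\rho$-equivariant map, and both the existence of $f$ and the independence of $\kappa_\rho$ from the choice of $f$ come from the contractibility of $Y$. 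Since $\tau(\rho)=\tfrac1{n!}\langle\kappa_\rho\smile[\omega_X]^{n-1},[X]\rangle$ with $n=\dim_\C X$, it suffices to show $\kappa_\rho=0$ whenever $\Gamma$ is amenable — and no semisimplicity hypothesis on $\rho$ will be needed.

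The crucial point is that $\kappa_\rho$ is the pullback, under the classifying map $X\to B\Gamma$, of the class $\rho^{*}\kappa_G\in H^2(\Gamma;\R)$, where $\kappa_G\in H^2_{c}(G;\R)$ is the continuous cohomology class corresponding to the invariant Kähler form $\omega_Y$. Because $Y$ is a bounded symmetric domain, $\kappa_G$ admits a bounded representative and hence lifts to $H^2_{cb}(G;\R)$, compatibly with the comparison maps to ordinary cohomology; pulling back by $\rho$, the class $\rho^{*}\kappa_G$ therefore lies in the image of $H^2_{b}(\Gamma;\R)\to H^2(\Gamma;\R)$. But $H^2_{b}(\Gamma;\R)=0$ since $\Gamma$ is amenable, so $\rho^{*}\kappa_G=0$ in $H^2(\Gamma;\R)$, whence $\kappa_\rho=0$ in $H^2(X;\R)$ and $\tau(\rho)=0$.

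A more hands-on variant — which also exhibits the statement as a consequence of \cite{BiFl14} together with \cite{Mi58} — avoids bounded cohomology: replace $\rho$ by its semisimplification, which has the same Toledo invariant since $\tau$ is conjugation-invariant and continuous on $\Hom(\Gamma,G)$ (the equivariant map may be chosen to vary continuously with $\rho$) and the semisimplification lies in the closure of the $G$-orbit of $\rho$. The Zariski closure $H$ of the new image is a virtually nilpotent reductive subgroup of $G$, so its identity component is a torus, and $H$ preserves a totally geodesic flat $F\subset Y$ (a closed $H$-orbit: the semisimple part of $H$ is trivial). Precomposing the inclusion $F\hookrightarrow Y$ with an equivariant map $\tX\to F$ — which exists because $\tX\times_\Gamma F\to X$ has contractible fibre — produces an $f$ with $f^{*}\omega_Y=0$, since $\omega_Y$ vanishes on every totally geodesic flat of a Hermitian symmetric space of noncompact type (flat $2$-planes are forced to be totally real by the pointwise bound on sectional curvature in terms of the Kähler angle). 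In either approach the delicate step is the bridge between the \emph{integral} defining $\tau(\rho)$ and the \emph{cohomological} input — in the first, verifying that the Toledo class is represented by a bounded cocycle on $\Gamma$ and is functorial in $\rho$; in the second, establishing the existence of the invariant flat $F$ and the vanishing of $\omega_Y$ along it.
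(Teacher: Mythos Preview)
Your bounded-cohomology argument is correct and takes a genuinely different route from the paper. The paper argues entirely inside the Higgs-bundle framework: the theorem of Biswas--Florentino says that for virtually nilpotent $\Gamma$ the $\C^*$-flow $(\calE,\Phi)\mapsto(\calE,t\Phi)$ converges to $(\calE,0)$, retracting $\M(X,G^c)$ onto the unitary locus; since $\M(X,G)\subset\M(X,G^c)$ is closed and $\C^*$-stable, every semisimple $\rho$ with values in $G$ deforms within $G$ to a unitary one, and then $\tau$ vanishes by deformation-invariance. Your approach is shorter, works for any amenable $\Gamma$ (not only virtually nilpotent), and needs no non-abelian Hodge theory; the paper's has the virtue of actually exhibiting the deformation and of using only the apparatus already set up.

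Two cautions on your ``hands-on variant''. First, despite the parenthetical, it is \emph{not} the argument of \cite{BiFl14}: that paper concerns limits of the $\C^*$-action on Higgs bundles, not invariant flats. Second, the existence of an $H$-invariant flat is not as immediate as ``a closed $H$-orbit'': after conjugating $H^0$ to be $\theta$-stable its orbit through $eK$ is indeed a flat, but you must still arrange for the finite group $H/H^0$ to preserve it, and an orbit of a split torus through an arbitrary point need not be flat. One clean fix: the maximal split subtorus $T_s\subset H^0$ is characteristic, hence normalized by a maximal compact $K_H\subset H$; take $o'$ to be a $K_H$-fixed point in the (non-empty, closed, convex, $N_G(T_s)$-invariant) min-set of $T_s$, and then $H\cdot o'=T_s\cdot o'$ is the desired flat. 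Your vanishing of $\omega_Y$ on flats is correct and can be seen in one line: for $X,Y$ in an abelian subspace of $\p$, $\omega_Y(X,Y)=\langle[Z,X],Y\rangle=\langle Z,[X,Y]\rangle=0$.
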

An immediate corollary of this fact (plus a Theorem of Delzant \cite{De10}) is that no cocompact lattice of a Hermitian Lie group is solvable. This fact is trivial in rank 1 (in that case, $\Gamma$ is hyperbolic), and it follows from the Margulis Normal Subgroup in higher rank.

\subsection{Organization of the paper}
We give different definitions of the Toledo invariant $\tau(\rho)$, using the Higgs bundles formalism, in Section \ref{sec:definitions}, together with its relation to the energy $E(\rho)$ and to the existence of an (anti)-holomorphic $\rho$-equivariant map. Section \ref{sec:royden} is devoted to the statement of Royden's version of the Ahlfors-Schwarz-Pick Lemma, and to the study of the case of equality. The geometric part of the proof of Theorem \ref{thm:main} occupies Section \ref{sec:proof}, while all the technical part involving matrix computations is postponed to Section \ref{sec:prooflemma}. The proof of Theorem \ref{thm:classification} occupies Section \ref{sec:proofclass}. Finally, in Section \ref{sec:other} we prove Proposition \ref{prop:nilpotent}, and discuss some future directions aiming to prove deformability to holomorphic representations.

\subsection{Acknowledgements}
I would like to thank Beatrice Pozzetti, Vincent Koziarz and Julien Maubon for some fruitful exchanges by email as well as Oskar Hamlet for explaining his results to me.

\section{Definitions and generalities}\label{sec:definitions}

Let $(X, \omega_X)$ be an $n$-dimensional compact Kähler manifold, $\Gamma = \pi_1(X, x_0)$ its fundamental group and $G$ a connected semisimple Lie group of non-compact Hermitian type. Denote by $K$ a maximal compact subgroup of $G$ and by $(Y, \omega_Y)$ the associated symmetric space, together with the standard Kähler form.

\begin{defn}
 The \emph{Toledo invariant} of $\rho$ is defined as
 \begin{equation}\label{eqn:toledo}
 \tau(\rho) = \frac{1}{n!} \int_X f^*\omega_Y \wedge \omega_X^{n-1},
 \end{equation}
 where $f \colon \tX \to Y$ is any $\rho$-equivariant continuous map from the universal cover $\tX$ of $X$ to $Y$.
\end{defn}

Such maps always exist (and can indeed be taken smooth), since $Y$ is contractible. For the same reason, any two of them are homotopic, and this implies that the Toledo invariant is independent of the chosen $f$.

\begin{lemma}
 $\tau$ defined as in \eqref{eqn:toledo} invariant under deformation of the representation $\rho$ and under conjugation. 
\end{lemma}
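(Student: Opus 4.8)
Proof proposal for the final Lemma ($\tau$ is invariant under deformation and conjugation).

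The plan is to exploit the homotopy-invariance of the pullback construction together with the fact that $\omega_Y$ is a closed form. First I would treat invariance under conjugation, which is the easier of the two claims. If $\rho' = g\rho g^{-1}$ for some $g \in G$, and $f \colon \tX \to Y$ is $\rho$-equivariant, then $f' = L_g \circ f$ (where $L_g$ denotes the isometry of $Y$ induced by $g$) is $\rho'$-equivariant: indeed $f'(\gamma \cdot \tx) = g \cdot f(\gamma \cdot \tx) = g\rho(\gamma) f(\tx) = \rho'(\gamma) g \cdot f(\tx) = \rho'(\gamma) f'(\tx)$. Since $g$ acts on $Y$ by a holomorphic isometry, $L_g^* \omega_Y = \omega_Y$, hence $(f')^* \omega_Y = f^* (L_g^* \omega_Y) = f^* \omega_Y$, and the integrand in \eqref{eqn:toledo} is literally unchanged, so $\tau(\rho') = \tau(\rho)$.

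For deformation-invariance, let $\rho_t \colon \Gamma \to G$, $t \in [0,1]$, be a continuous path of representations. The key point is that one can choose the equivariant maps to vary continuously with $t$: pick $f_0 \colon \tX \to Y$ a smooth $\rho_0$-equivariant map, and I would argue (exactly as in the remark preceding the Lemma, using that $Y$ is contractible, or rather that the associated flat bundle $\tX \times_{\rho_t} Y$ has contractible fibers) that there is a continuous family $f_t \colon \tX \to Y$ with $f_t$ being $\rho_t$-equivariant. Each $2$-form $f_t^* \omega_Y$ on $\tX$ descends to a closed $2$-form $\alpha_t$ on $X$ (closedness because $\omega_Y$ is closed; $\Gamma$-invariance by equivariance). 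The family $t \mapsto \alpha_t$ is then a smooth path of closed $2$-forms, and since the cohomology class $[\alpha_t] \in H^2(X;\R)$ depends continuously on $t$ while living in the lattice-like image of $H^2$ with the discreteness one gets from... — more carefully, I would simply observe that $\frac{d}{dt}[\alpha_t] = [\frac{d}{dt}\alpha_t]$ and that $\frac{d}{dt}\alpha_t$ is exact. Concretely, writing $V_t$ for the ($\Gamma$-equivariant) variation vector field of $f_t$ along $Y$, Cartan's formula gives $\frac{d}{dt}(f_t^*\omega_Y) = f_t^*(\mathcal{L}_{V_t}\omega_Y) = f_t^*(d\iota_{V_t}\omega_Y + \iota_{V_t} d\omega_Y) = d\big(f_t^* \iota_{V_t}\omega_Y\big)$, using $d\omega_Y = 0$. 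This $1$-form $f_t^* \iota_{V_t}\omega_Y$ is again $\Gamma$-invariant, so it descends to a $1$-form $\beta_t$ on $X$ with $\frac{d}{dt}\alpha_t = d\beta_t$. Hence
\begin{equation*}
 \frac{d}{dt}\tau(\rho_t) = \frac{1}{n!}\int_X \frac{d}{dt}\alpha_t \wedge \omega_X^{n-1} = \frac{1}{n!}\int_X d\beta_t \wedge \omega_X^{n-1} = \frac{1}{n!}\int_X d\big(\beta_t \wedge \omega_X^{n-1}\big) = 0
\end{equation*}
by Stokes' theorem, using $d\omega_X = 0$ (Kähler) and $\partial X = \emptyset$. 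Therefore $\tau(\rho_t)$ is constant in $t$.

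The main obstacle is the construction of a continuous (or smooth) family of equivariant maps $f_t$ and the justification that $\frac{d}{dt}f_t^*\omega_Y$ can be written via a genuine variation field — i.e. making the Cartan-formula computation rigorous in the equivariant setting. This is handled by the standard obstruction-theory argument: the associated bundles $E_t = \tX \times_{\rho_t} Y \to X$ over the contractible-fiber situation assemble, as $t$ varies over the compact interval $[0,1]$, into a single fiber bundle over $X \times [0,1]$ with contractible fibers $Y$; a section over $X \times \{0\}$ (namely $f_0$) extends to a section over $X \times [0,1]$ because the fibers are contractible, and the restrictions to the slices give the desired family $f_t$. Smoothness can be arranged by a standard approximation. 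One could alternatively bypass the infinitesimal computation entirely: $\alpha_0$ and $\alpha_1$ are cohomologous because $f_0$ and $f_1$ are homotopic through equivariant maps (the homotopy being exactly the family $f_t$), so $[\alpha_0] = [\alpha_1]$ in $H^2(X;\R)$ directly, and then $\tau(\rho_0) = \tau(\rho_1)$ since $[\omega_X^{n-1}]$ is a fixed cohomology class; this is cleaner and is the version I would actually write.
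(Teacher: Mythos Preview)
Your treatment of conjugation invariance is correct and matches the paper's.

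Your deformation-invariance argument, however, has a genuine gap. The assertion that the $1$-form $\beta_t = f_t^*(\iota_{V_t}\omega_Y)$ is $\Gamma$-invariant is false in general. The variation field $V_t$ is \emph{not} $\rho_t$-equivariant when $\rho_t$ is actually moving: differentiating the relation $f_t(\gamma\cdot\tx)=\rho_t(\gamma)\cdot f_t(\tx)$ in $t$ gives
\[
 V_t(\gamma\cdot\tx) \;=\; (\rho_t(\gamma))_* V_t(\tx) \;+\; \big[\dot\rho_t(\gamma)\,\rho_t(\gamma)^{-1}\big]^{\#}\big(f_t(\gamma\cdot\tx)\big),
\]
where $[\,\cdot\,]^{\#}$ denotes the fundamental vector field of the $G$-action on $Y$. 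The second summand is nonzero whenever $\dot\rho_t(\gamma)\neq 0$, and it produces a nonvanishing discrepancy $\gamma^*\beta_t-\beta_t$. Thus $\beta_t$ does not descend to $X$, and the Stokes step $\int_X d\beta_t\wedge\omega_X^{n-1}=0$ is unjustified. Your ``alternative'' version suffers from the same defect: $f_0$ and $f_1$ are equivariant for \emph{different} representations, so the homotopy $(f_t)$ does not descend to a map out of $X\times[0,1]$ into any fixed target, and homotopy invariance of pullback does not yield $[\alpha_0]=[\alpha_1]$ in $H^2(X;\R)$.

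The paper proceeds by a different mechanism that supplies exactly the missing rigidity. For irreducible $Y$ one has $\omega_Y=\frac{4\pi}{c_Y}\,c_1(K_{\check Y})$, the first Chern class of the canonical bundle of the compact dual restricted to $Y$. This line bundle is $G$-equivariant, so its pullback under $f$ descends to an honest line bundle on $X$, and hence $f^*c_1(K_{\check Y})$ defines an \emph{integral} class in $H^2(X;\Z)$. As $\rho$ varies continuously this integral class varies continuously, and therefore is locally constant; cupping with the fixed class $[\omega_X^{n-1}]$ then gives constancy of $\tau$. Integrality of the class, not a Cartan--Stokes transgression, is what forces local constancy.
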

\begin{proof}
 The proof of this Lemma is classical. If $\tilde \rho = \Ad_g \rho$, for some $g \in G$, and if $f$ is $\rho$-equivariant, then $\tilde f = g \cdot f$ is $\tilde \rho$ equivariant, and $\tilde f^*\omega_Y = f^*\omega_Y$ by left invariance of $\omega_Y$. To prove invariance under deformation, remark that the only term depending on $\rho$ in the definition of $\tau(\rho)$ is $f^*\omega_Y$. Suppose now that $Y$ is irreducible. Then, there is an integer $c_Y$ such that:
 \begin{equation}\label{eqn:KE}
 \omega_Y = \frac{4 \pi}{c_Y} c_1(K_{\check{Y}}),
 \end{equation}
 where $K_{\check{Y}}$ is the restriction to $Y$ of the canonical bundle of the compact dual $\check{Y}$ of $Y$ (for the explicit values of $c_Y$, see the table in \cite{KoMa10}, which is taken from \cite{He78} and \cite{Li06}). In particular, $f^*c_1(K_{\check{Y}})$ is an \emph{integral} cohomology 2-class. Since this varies continuously with $\rho$, it must actually be constant on connected components of $\Hom(\Gamma, G)$. As a consequence, the cup product with the fixed $2n-2$ cohomology class $[\omega_X^{n-1}]$ must be constant, as well. The general case follows from additivity of $\tau$ with respect to the decomposition of $Y$ into irreducible factors.
\end{proof}

The immediate consequence of this lemma is that $\tau$ is constant on every connected component of the quotient space $\Hom(\Gamma, G)/G$, where $G$ acts on $\Hom(\Gamma, G)$ by conjugation. In order to work on a separated space, we will actually consider the GIT quotient
$$
\M = \Hom(\Gamma, G)\GIT G \cong \Hom(\Gamma, G)^{ss}/G,
$$
which goes under the name of ``$G$-character variety'' or ``Betti moduli space''. Its points are in bijection with the orbits of \emph{semisimple} representations $\rho$ (i.e. representations such that the Zariski closure of $\rho(\Gamma)$ is a reductive subgroup of $G$).

\begin{defn}\label{defn:energy}
 The \emph{energy} of a representation $\rho \colon \Gamma \to G$ is defined as
 \begin{equation}\label{eqn:energy}
 E(\rho) = \inf \bigg\{ \frac{1}{2} \int_X \big\| \de f \big\|^2 \frac{\omega_X^n}{n!} \ \Big | \ f \text{ is smooth and } \rho \text{-equivariant} \bigg\}.
 \end{equation}
 Here, $\de f$ is seen as a section of the bundle $T^*\tX \otimes f^*TY$, endowed with the metric induced by the Riemannian metrics on $X$ and $Y$. Since $G$ acts by isometries, the norm of $\de f$ actually descends to a function on $X$.
\end{defn}

Recall that a map $f$ realizing the minimum in \eqref{eqn:energy} is called \emph{harmonic}. By Corlette's theorem \cite{Co88, JoYa91}, such a map exists if and only if $\rho$ is semisimple. Given a harmonic $\rho$-equivariant map $f$ and a faithful linear representation of $G$, one can interpret $f$ as a metric on the flat complex bundle corresponding to $\rho$. In that way, to a given semi-simple representation $\rho$, one can associate a \emph{Higgs bundle} $(\calE, \Phi)$ as in \cite{Si92}. This correspondence is well defined and, under a suitable notion of stability and up to some isomorphisms, bijective. Under this mapping, $\Phi$ is essentially the projection of $\de f$ to the holomorphic tangent bundle $T^{1,0}\tX$. In particular, one finds:
\begin{equation}\label{eqn:energyhiggs}
E(\rho) = \big\|\Phi\big\|_{L^2}^2.
\end{equation}

\begin{lemma}\label{lemma:toledohiggs}
 Let $\rho \colon \Gamma \to G$ be a semisimple representation to a simple Lie group of non-compact Hermitian type $G$. Denote by $Z$ the generator of the center of the Lie algebra $\k$ of $K$ having eigenvalues $\pm i$, so that $J = \ad(Z)$ gives the complex structure on $Y$. Let $\scal{\cdot}{\cdot}$ be the $G$-invariant metric on the Lie algebra $\g$ of $G$ inducing the chosen metric on $Y$ and extend it to a Hermitian metric on $\g_\C$, still denoted by $\scal{\cdot}{\cdot}$. Then:
  \begin{equation}\label{eqn:toledohiggs}
    \tau(\rho) = \frac{1}{n}\int_X \bigscal{\Phi}{[\Phi, iZ]} \frac{\omega_X^n}{n!}.
  \end{equation}
\end{lemma}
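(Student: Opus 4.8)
The plan is to evaluate the defining integral \eqref{eqn:toledo} on a \emph{harmonic} $\rho$-equivariant map $f$ — which exists by Corlette's theorem since $\rho$ is semisimple — and to rewrite the integrand in Lie-algebra terms. Throughout I use the identification recalled just before the statement: via the Cartan decomposition $\g=\k\oplus\p$ and $T_{eK}Y\cong\p$, the differential $\de f$ is the $\p$-part of the pulled-back Maurer--Cartan form, and the Higgs field attached to $\rho$ is exactly its $(1,0)$-part, $\Phi=(\de f)^{1,0}=\partial f$, now regarded as a $(1,0)$-form with values in the bundle with fibre $\p_\C=\p^+\oplus\p^-$, the $\pm i$-eigenspaces of $J=\ad(Z)$; the relation $E(\rho)=\|\Phi\|_{L^2}^2$ pins down this normalization with no spurious constant. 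Reality of $\de f$ gives $(\de f)^{0,1}=\overline{\partial f}$.

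Next, a piece of Kähler linear algebra reduces the wedge product to a contraction. Since $\omega_Y$ is of type $(1,1)$ on $Y$ while $f$ need not be holomorphic, $f^*\omega_Y$ has components of types $(2,0)$, $(1,1)$, $(0,2)$; the $(2,0)$ and $(0,2)$ parts vanish after wedging with $\omega_X^{n-1}$ on the $n$-dimensional $X$, and for the $(1,1)$-part the standard identity $\beta^{1,1}\wedge\frac{\omega_X^{n-1}}{(n-1)!}=\Lambda_{\omega_X}\beta^{1,1}\cdot\frac{\omega_X^n}{n!}$ gives
\begin{equation*}
 \tau(\rho)=\frac{1}{n!}\int_X f^*\omega_Y\wedge\omega_X^{n-1}=\frac{1}{n}\int_X\Lambda_{\omega_X}(f^*\omega_Y)\,\frac{\omega_X^n}{n!}.
\end{equation*}
It thus remains to establish the pointwise identity $\Lambda_{\omega_X}(f^*\omega_Y)=\bigscal{\Phi}{[\Phi,iZ]}$.

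Work in a local holomorphic coordinate $(z^i)$ and write $\Phi=\sum_i\Phi_i\,\de z^i$, so that $\de f(\partial_{z^i})=\Phi_i$ and $\de f(\partial_{\bar z^j})=\overline{\Phi_j}$. Using $\omega_Y(u,v)=\scal{Ju}{v}$ with $J=\ad(Z)$ and that the Hermitian extension of the metric conjugates its second argument,
\begin{equation*}
 (f^*\omega_Y)(\partial_{z^i},\partial_{\bar z^j})=\scal{[Z,\Phi_i]}{\Phi_j}=i\,\scal{[\Phi_i,iZ]}{\Phi_j}=i\,\scal{\Phi_i}{[\Phi_j,iZ]},
\end{equation*}
where the second equality is $[Z,\cdot]=i\,[\,\cdot\,,iZ]$ and the third uses that $\ad(iZ)$, hence $[\,\cdot\,,iZ]$, is self-adjoint for $\scal{\cdot}{\cdot}$ — a consequence of $\ad(Z)$ being skew for the invariant metric. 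Contracting with $g^{i\bar j}$, the factor $-i$ from the normalization of $\Lambda_{\omega_X}$ (with $\omega_X=i\sum g_{i\bar j}\,\de z^i\wedge\de\bar z^j$) cancels the $i$ above, giving $\Lambda_{\omega_X}(f^*\omega_Y)=\sum_{i,j}g^{i\bar j}\scal{\Phi_i}{[\Phi_j,iZ]}=\bigscal{\Phi}{[\Phi,iZ]}$; combined with the previous display, this is the assertion. Reality of the right-hand side follows from conjugate-symmetry of $\scal{\cdot}{\cdot}$ and the self-adjointness just used.

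The substantive difficulty is the bookkeeping of signs, powers of $i$ and conjugations, and keeping apart the two uses of $\scal{\cdot}{\cdot}$ — the invariant $\R$-bilinear form on $\g$ versus its Hermitian extension to $\g_\C$ — as well as the factor-of-$2$ conventions relating the form metric $\scal{\de z^i}{\de z^j}$ to $g^{i\bar j}$. A useful consistency check: $[\,\cdot\,,iZ]$ reverses the $\p^+/\p^-$ grading (acting by $+1$ on $\p^+$ and $-1$ on $\p^-$), so $\bigscal{\Phi}{[\Phi,iZ]}$ has pointwise integrand $\|(\partial f)^{[\p^+]}\|^2-\|(\partial f)^{[\p^-]}\|^2$, a genuinely signed quantity — as it must be, since $\tau$ changes sign under conjugating the complex structure of $Y$ — which is exactly what $\langle J\,\partial f,\overline{\partial f}\rangle$ produces, while the energy density $\|\Phi\|^2$ is the sum of the same two terms, consistently with $E(\rho)=\|\Phi\|_{L^2}^2$. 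No geometric input beyond the definitions recalled above is required.
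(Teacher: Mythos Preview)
Your argument is correct and follows essentially the same route as the paper: fix a harmonic $f$, reduce the wedge with $\omega_X^{n-1}$ to a pointwise trace, and identify the integrand with $\langle\Phi,[\Phi,iZ]\rangle$ via the skew-adjointness of $\ad(Z)$. The only real difference is organisational: the paper works in a real orthonormal frame $(\partial_{x_j},\partial_{y_j})$, expands $\omega_Y\big(\dev{f}{x_j},\dev{f}{y_j}\big)$ into four terms using $f^*\beta_Y=\Phi+\Phi^*$, and then shows two of them cancel and the remaining two coincide; you instead observe at the outset that only the $(1,1)$-part of $f^*\omega_Y$ survives against $\omega_X^{n-1}$ and compute that single term $\omega_Y(\Phi_i,\overline{\Phi_j})$ in holomorphic coordinates. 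Your packaging is a bit cleaner and avoids the explicit cross-term manipulation.

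One cosmetic slip: in your consistency check, $[\,\cdot\,,iZ]$ does not ``reverse'' the $\p^+/\p^-$ grading but preserves it, acting by the scalar $+1$ on $\p^+$ and $-1$ on $\p^-$ (exactly as you then use). This is only a wording issue in the final paragraph and does not affect the proof.
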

\begin{proof}
 Since $\rho$ is semisimple, we can, and will, fix a harmonic $\rho$-equivariant map $f$. First observe that $*\omega_X = \frac{1}{(n-1)!} \omega_X^{n-1}$, hence
 $$
 \tau(\rho) = \frac{1}{n!} \int_X f^*\omega_Y \wedge \omega_X^{n-1} = \frac{1}{n} \int_X \scal{f^*\omega_Y}{\omega_X} \frac{\omega_X^n}{n!} = \frac{1}{n}\sum_j \int_X \omega_N\big(\dev{f}{x_j}, \dev{f}{y_j}\big) \frac{\omega_X^n}{n!},
 $$
 where, abusing notation, we are denoting by $\dev{}{x_j}$, $\dev{}{y_j}$ a local orthonormal frame on $X$ compatible with the complex structure $J$, i.e. $J(\dev{}{x_j}) = \dev{}{y_j}$. Denote by $\beta_Y$ the $\g$-valued 1-form on $Y$ giving the usual inclusion of vector bundles $TY \subset Y \times \g$, that is a right inverse to the projection $Y \times \g \ni (y, \xi) \mapsto \dev{}{t} \big(\exp(t\xi)\cdot y\big)\big|_{t=0}$. Then:
 \begin{align*}
 \tau(\rho) &= \frac{1}{n} \sum_j \int_X \Bigscal{\big[Z,\beta_Y(\dev{f}{x_j})\big]}{\beta_Y(\dev{f}{y_j})} \frac{\omega_X^n}{n!}\\
            &= \frac{1}{n} \sum_j \int_X \Bigscal{\big[Z,\Phi(\partial_j)+\Phi^*(\bar\partial_j)\big]}{i\big(\Phi(\partial_j)-\Phi^*(\bar\partial_j)\big)},
 \end{align*}
 where we have used that $f^*\beta_Y = \Phi + \Phi^*$ and denoted by $\partial_j = \frac{1}{2}\dev{}{x_j} -\frac{i}{2}\dev{}{y_j}$. Now $Z$ is skew-adjoint; extending $\scal{\cdot}{\cdot}$ to a Hermitian form, and since $\Phi^*(\bar\partial_j)$ is the adjoint of $\Phi(\partial)$, we have
\begin{align*}
\bigscal{[Z,\Phi(\partial)]}{i\Phi^*(\bar\partial)} &= \bigscal{\Phi(\partial)}{[-Z,i\Phi^*(\bar\partial)]} = \bigscal{[-i\Phi(\partial), Z]}{\Phi^*(\bar\partial)}\\
                                                    &= \bigscal{[\Phi(\partial),Z]}{i\Phi^*(\bar\partial)},
\end{align*}
hence this vanishes. Furthermore,
\begin{align*}
\bigscal{[Z,\Phi^*(\bar\partial)]}{-i\Phi^*(\bar\partial)} &= \bigscal{i\Phi(\partial)}{[\Phi(\partial), -Z]} = \bigscal{[-iZ, \Phi(\partial)]}{\Phi(\partial)}\\
                                                           &= \bigscal{[Z, \Phi(\partial)]}{i\Phi(\partial)}.
\end{align*}
We thus obtain the wished result:
$$
\tau(\rho) = \frac{2}{m} \sum_j \int_X \bigscal{[Z,\Phi(\partial_j)]}{i\Phi(\partial_j)} \frac{\omega_X^n}{n!} = \frac{1}{n} \int_X \bigscal{[Z,\Phi]}{i\Phi} \frac{\omega_X^n}{n!}.
$$
\end{proof}

Given any $\rho$, one can define its semi-simplification $\rho^{ss}$ as any element in the unique closed orbit contained in the closure of the orbit of $\rho$. Recall the following:
\begin{lemma}[\cite{Sp14}, Lemma 3.4]\label{lemma:energysemisimplification}
 Let $\rho \colon \Gamma \to G$ be a representation. Then $E(\rho) = E(\rho^{ss})$.
\end{lemma}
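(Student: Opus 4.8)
The plan is to prove the two inequalities $E(\rho)\le E(\rho^{ss})$ and $E(\rho^{ss})\le E(\rho)$ separately, using throughout two elementary facts. First, $E$ is invariant under conjugation: if $\tilde\rho=\Ad_g\rho$ and $f$ is $\rho$-equivariant, then $g\cdot f$ is $\tilde\rho$-equivariant with the same energy, since $G$ acts on $Y$ by isometries; hence $E(\tilde\rho)=E(\rho)$, exactly as in the proof of the conjugation-invariance of $\tau$. Second, since $\Gamma$ is finitely presented, $\Hom(\Gamma,G)$ is an affine $G$-variety for the conjugation action, so by the Hilbert--Mumford--Kempf theory there is a one-parameter subgroup $\lambda\colon\C^*\to G$ such that $\lim_{t\to0}\Ad_{\lambda(t)}\rho$ lies in the unique closed orbit contained in $\overline{G\cdot\rho}$; possibly after iterating this finitely many times we may assume this limit is a conjugate of $\rho^{ss}$. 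Below, by the \emph{energy} of a smooth equivariant $f$ I mean $\tfrac12\int_X\|\de f\|^2\tfrac{\omega_X^n}{n!}$.

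For $E(\rho)\le E(\rho^{ss})$ I would show that $E$ is upper semicontinuous on $\Hom(\Gamma,G)$. Given $\varepsilon>0$, pick a smooth $\rho^{ss}$-equivariant $f_0$ of energy $<E(\rho^{ss})+\varepsilon$. Fix a compact fundamental domain $D\subset\tX$ for $\Gamma$ and a finite set $S\subset\Gamma$ of "neighbour" elements, so that an equivariant map is encoded by its restriction to $D$ together with the gluing relations $f(s\cdot\tx)=\sigma(s)f(\tx)$ for $s\in S$ on the overlaps of the translates of $\partial D$. Perturbing $\sigma$ away from $\rho^{ss}$ perturbs these constraints by $O(\dist(\sigma,\rho^{ss}))$, and one can absorb the perturbation by geodesic interpolation in the $\CAT(0)$ target $Y$, producing for every $\sigma$ near $\rho^{ss}$ a smooth $\sigma$-equivariant $f_\sigma$ whose energy differs from that of $f_0$ by $O(\dist(\sigma,\rho^{ss}))$. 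Applying this with $\sigma=\Ad_{\lambda(t)}\rho$ as $t\to0$, and using $E(\Ad_{\lambda(t)}\rho)=E(\rho)$, gives $E(\rho)\le E(\rho^{ss})+2\varepsilon$, hence $E(\rho)\le E(\rho^{ss})$.

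For the reverse inequality $E(\rho^{ss})\le E(\rho)$ the idea is to manufacture, from a near-optimal $\rho$-equivariant map, a $\rho^{ss}$-equivariant one of no greater energy, using that a minimizing sequence for $E(\rho)$ degenerates precisely onto the semisimplification. Fix $\varepsilon>0$ and a smooth $\rho$-equivariant $f_0$ of energy $<E(\rho)+\varepsilon$, and run the $\Gamma$-equivariant harmonic map heat flow $u_t$ with $u_0=f_0$: it exists for all $t\ge0$ and $t\mapsto\text{energy}(u_t)$ is non-increasing. By the (non-semisimple refinement of the) theory of equivariant harmonic maps (cf.\ \cite{Co88,JoYa91}), there is a path $g_t\in G$ such that $g_t\cdot u_t$ converges in $W^{1,2}_{\textnormal{loc}}$ on $\tX$ to a harmonic map $h$ that is equivariant for $\rho^{ss}$. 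Since the $g_t$ act by isometries, $\text{energy}(g_t\cdot u_t)=\text{energy}(u_t)\le\text{energy}(f_0)$, and by weak lower semicontinuity of the Dirichlet energy $\text{energy}(h)\le\lim_t\text{energy}(u_t)\le\text{energy}(f_0)<E(\rho)+\varepsilon$. As $h$ is $\rho^{ss}$-equivariant, $E(\rho^{ss})<E(\rho)+\varepsilon$; letting $\varepsilon\to0$ concludes.

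The hard part will be the convergence statement in the last paragraph — that the equivariant heat flow of an arbitrary representation converges, modulo the $G$-action, to the harmonic map of its semisimplification; this is the non-semisimple analogue of Corlette's existence theorem. An alternative route is to prove directly that $E$ is lower semicontinuous, by analysing minimizing sequences that escape to infinity in $Y$ and bounding their limiting energy below by $E(\rho^{ss})$ using convexity of the energy functional along geodesics in the $\CAT(0)$ space $Y$; this, however, carries essentially the same analytic difficulty. All the remaining ingredients — conjugation-invariance, the Hilbert--Mumford reduction, and the fundamental-domain interpolation argument for upper semicontinuity — are routine.
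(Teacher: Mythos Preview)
The paper does not prove this lemma; it is quoted from \cite{Sp14}, Lemma~3.4, so there is no in-paper argument to compare against directly.

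Your argument for $E(\rho)\le E(\rho^{ss})$ via upper semicontinuity of $E$ is correct and essentially standard. The gap is in the reverse inequality. The convergence statement you invoke---that the recentred heat flow $g_t\cdot u_t$ limits to a harmonic map equivariant for $\rho^{ss}$---is not contained in \cite{Co88,JoYa91}, which treat only the semisimple case, and you rightly flag it as the hard part. What one obtains cheaply is this: recentring so that $g_tu_t(\tx_0)=o$, the energy bound forces $\Ad_{g_t}\rho$ to stay precompact in $\Hom(\Gamma,G)$, so along a subsequence $\Ad_{g_{t_k}}\rho\to\rho'\in\overline{G\cdot\rho}$, and by weak lower semicontinuity the $W^{1,2}$ limit $h$ is $\rho'$-equivariant with energy $\le E(\rho)+\varepsilon$. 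But to conclude $\rho'\sim\rho^{ss}$ you need $\rho'$ to be \emph{semisimple}, which in turn requires $h$ to be harmonic (via the converse to Corlette), and weak $W^{1,2}$ convergence alone does not give that. Upgrading to $C^\infty_{\textnormal{loc}}$ convergence of the recentred flow (so that the tension field passes to the limit) is possible for non-positively curved targets, but it is genuine extra work and amounts to a Donaldson-type limiting statement no lighter than the lemma itself.

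A direct, flow-free argument bypasses this entirely and is almost certainly what is intended. By Kempf's theorem there is a one-parameter subgroup $\lambda$ with $\lim_{t\to 0}\Ad_{\lambda(t)}\rho$ in the closed orbit; let $P=P(\lambda)=L\ltimes N$ be the associated parabolic, so that $\rho(\Gamma)\subset P$ and $\rho^{ss}$ is conjugate to the composition of $\rho$ with the projection $P\to L$. The $N$-orbit projection $\pi\colon Y\to N\backslash Y\cong Y_L=L/(L\cap K)$ is $P$-equivariant and $1$-Lipschitz: at the basepoint $o$ the subspaces $T_oY_L$ and $T_o(N\cdot o)$ are orthogonal in $\p$ by the root-space decomposition, and this propagates everywhere since $L$ and $N$ act on $Y$ by isometries. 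Hence for any smooth $\rho$-equivariant $f$, the map $\pi\circ f\colon\tX\to Y_L\hookrightarrow Y$ is $\rho^{ss}$-equivariant with energy at most that of $f$, giving $E(\rho^{ss})\le E(\rho)$ immediately.
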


We give one last definition:
\begin{defn}
 A semisimple representation $\rho \colon \Gamma \to G$ will be called $\pm$-holomorphic if one (hence, any) harmonic $\rho$-equivariant map $f$ is (anti)-holomorphic.
\end{defn}

We are ready to state the result linking the different definitions given so far:
\begin{prop}\label{prop:energyinequality}
 For every representation $\rho \colon \Gamma \to G$, we have $E(\rho) \geq n|\tau(\rho)|$. Furthermore, $\rho$ is $\pm$-holomorphic if and only if it is semisimple and equality holds.
\end{prop}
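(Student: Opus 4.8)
The plan is to deduce both assertions from a single pointwise inequality, combined with the two identities recalled just above: $E(\rho)=\|\Phi\|_{L^2}^2$ from~\eqref{eqn:energyhiggs}, and $\tau(\rho)=\frac1n\int_X\langle\Phi,[\Phi,iZ]\rangle\,\frac{\omega_X^n}{n!}$ from Lemma~\ref{lemma:toledohiggs}. First I would assume $\rho$ semisimple and $G$ simple, so that a harmonic $\rho$-equivariant map $f$ and its Higgs field $\Phi$ are at hand. Writing $\p_\C=\p^+\oplus\p^-$ for the $(\pm i)$-eigenspaces of $J=\ad(Z)$, decompose $\Phi=\Phi^++\Phi^-$ accordingly. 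A one-line bracket computation gives $[\Phi,iZ]=\Phi^+-\Phi^-$; since $Z$ is skew-adjoint, $\p^+$ and $\p^-$ are orthogonal for the Hermitian extension of $\langle\cdot,\cdot\rangle$, so at every point $\langle\Phi,[\Phi,iZ]\rangle=\|\Phi^+\|^2-\|\Phi^-\|^2$ while $\|\Phi\|^2=\|\Phi^+\|^2+\|\Phi^-\|^2$. Integrating and using $|a-b|\le a+b$ for $a,b\ge 0$ yields $n\,|\tau(\rho)|\le\int_X\big(\|\Phi^+\|^2+\|\Phi^-\|^2\big)\,\frac{\omega_X^n}{n!}=E(\rho)$.

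For a general representation and general semisimple $G$ the inequality follows formally: $E(\rho)=E(\rho^{ss})$ by Lemma~\ref{lemma:energysemisimplification}, while $\tau(\rho)=\tau(\rho^{ss})$ because $\rho^{ss}$ lies in the closure of the $G$-orbit of $\rho$, hence in the same connected component of $\Hom(\Gamma,G)$, on which $\tau$ is constant; and the decomposition $Y=\prod_iY_i$ into irreducible factors splits $\tau$, $E$ and $\Phi$ into factors, so summing the simple case gives $E=\sum_iE_i\ge n\sum_i|\tau_i|\ge n|\tau|$. For the equality statement I would run this backwards. If $\rho$ is semisimple and $n|\tau(\rho)|=E(\rho)$, then in the simple case equality forces, almost everywhere on $X$, both that $\Phi^+=0$ or $\Phi^-=0$ at each point, and that $\|\Phi^+\|^2-\|\Phi^-\|^2$ has constant sign; a short case check then gives $\Phi^-\equiv0$ on all of $X$ if $\tau\ge0$, and $\Phi^+\equiv0$ if $\tau\le0$. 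Under the usual identification of $\Phi$ with the $(1,0)$-part of $\de f$, and of $\p^+$, $\p^-$ with the holomorphic and anti-holomorphic tangent spaces of $Y$, the vanishing of $\Phi^-$ (resp.\ $\Phi^+$) is exactly the vanishing of the anti-holomorphic (resp.\ holomorphic) part of $\de f$, i.e.\ $f$ is holomorphic (resp.\ anti-holomorphic), so $\rho$ is $\pm$-holomorphic; the factor decomposition upgrades this to general semisimple $G$, since equality also forces all nonzero $\tau_i$ to share a sign, hence a common holomorphicity type across the factors. Conversely, a $\pm$-holomorphic $\rho$ is semisimple by definition, and for the corresponding harmonic $f$ one of $\Phi^+$, $\Phi^-$ vanishes identically, so $n|\tau(\rho)|=\int_X\|\Phi\|^2\,\frac{\omega_X^n}{n!}=E(\rho)$.

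I expect the delicate points to be: (i) fixing the sign conventions relating $Z$, the complex structure of $Y$ and the $(1,0)/(0,1)$ splitting of $\de f$, so that $\Phi^-\equiv0$ really corresponds to $f$ holomorphic rather than anti-holomorphic; and (ii) checking that~\eqref{eqn:energyhiggs} and~\eqref{eqn:toledohiggs} are normalized with no spurious constant between their integrands. A cleaner variant bypasses Higgs bundles entirely: for any smooth $\rho$-equivariant $f$ one has the pointwise bound $|\langle f^*\omega_Y,\omega_X\rangle|\le\frac12\|\de f\|^2$, by Cauchy--Schwarz applied to $\omega_Y(\cdot,\cdot)=\langle J_Y\cdot,\cdot\rangle$ together with the arithmetic--geometric mean inequality; integrating and taking the infimum over $f$ gives $n|\tau(\rho)|\le E(\rho)$ for every $\rho$. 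The equality analysis is then carried out on a harmonic $f$ in the semisimple case, where equality in Cauchy--Schwarz and in AM--GM forces $\de f$ to intertwine the two complex structures up to one global sign; here one must still treat the locus $\{\de f=0\}$, using that a harmonic $f$ is smooth so that its anti-holomorphic part is a globally defined smooth tensor vanishing identically once it vanishes off a null set.
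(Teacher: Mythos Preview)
Your proof is correct and follows essentially the same route as the paper: reduce to the semisimple case via Lemma~\ref{lemma:energysemisimplification} and the deformation-invariance of $\tau$, then use the orthogonal splitting $\Phi=\Phi^++\Phi^-$ together with \eqref{eqn:energyhiggs} and Lemma~\ref{lemma:toledohiggs} to compare $\int(\|\Phi^+\|^2+\|\Phi^-\|^2)$ with $\int(\|\Phi^+\|^2-\|\Phi^-\|^2)$. You supply more detail than the paper (which simply writes ``the conclusion follows'') on the equality analysis and on the passage from simple to semisimple $G$, and your alternative pointwise argument via $|\langle f^*\omega_Y,\omega_X\rangle|\le\frac12\|\de f\|^2$ is a genuine, if minor, variant not present in the paper.
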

\begin{proof}
 Since $E(\rho) = E(\rho^{ss})$ and $\tau(\rho) = \tau(\rho^{ss})$ (as $\rho$ may be deformed to $\rho^{ss}$ by definition), and since $\pm$-holomorphic representations are semisimple by definition, we will assume that $\rho$ is semisimple. Then, using the Higgs bundles formalism in equation \eqref{eqn:energyhiggs} and Lemma \ref{lemma:toledohiggs}, we have
 $$
 E(\rho) = \int_X \big\|\Phi\big\|^2 \frac{\omega_X^n}{n!}, \quad \tau(\rho) = \frac{1}{n} \int_X \Big( \big\|\Phi^+\big\|^2 - \big\| \Phi^-\big\|^2\Big) \frac{\omega_X^n}{n!},
 $$
 where $\Phi = \Phi^+ + \Phi^-$ is the decomposition into $\pm i$-eigenspaces of $Z$. Since this decomposition is orthogonal,
 $$
 E(\rho) = \int_X \Big( \big\|\Phi^+\big\|^2 + \big\| \Phi^-\big\|^2\Big) \frac{\omega_X^n}{n!},
 $$
 and the the conclusion follows.
\end{proof}

\begin{remark}\label{rmk:positivity}
 One can always assume that $\tau(\rho) \geq 0$. Indeed, the representations with negative $\tau$ are obtained by exchanging the complex structure on $Y$ (explicitely, for matrix groups, by $\rho \mapsto (\rho^t)^{-1}$). Accordingly, one would simply speak about ``holomorphic $\rho$'' instead of ``$\pm$-holomorphic $\rho$.
\end{remark}

\section{Royden's Ahlfors-Schwarz Lemma}\label{sec:royden}

The main technical instrument in our proof is the following theorem by Royden:

\begin{theorem}[\cite{Ro80}, Theorem 1]\label{thm:royden}
 Let $(\tX, g)$, $(Y, h)$ be Kähler manifolds, such that $\tX$ is complete and with Ricci curvature bounded from below by $k \leq 0$ and $Y$ has holomorphic sectional curvature bounded from above by $K < 0$. Then, for every holomorphic map $f \colon \tX \to Y$ of (holomorphic) rank $\leq \nu$, we have
 \begin{equation}\label{eqn:Royden}
 e(f) = \big\|\de f\big\|^2 \leq \frac{2 \nu}{\nu + 1} \frac{k}{K},
 \end{equation}
 where $e(f) = \|\de f\|^2$ is the energy density of $f$, given by the norm of $\de f$ as in Definition \ref{defn:energy}.
\end{theorem}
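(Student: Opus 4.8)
The plan is to reduce the theorem to a pointwise bound for the energy density $u := e(f) = \|\de f\|^{2}$. Since $f$ is holomorphic, $u$ equals a fixed positive multiple (depending only on the metric normalisations) of the trace $\trace_{g}(f^{*}h)$ of the pulled-back metric; in particular $u$ is a smooth nonnegative function on $\tX$, smooth and strictly positive on the open set $\{u>0\}$. If $u\equiv 0$ there is nothing to prove, so assume $u\not\equiv 0$. The statement \eqref{eqn:Royden} is then equivalent to the pointwise inequality $u(p)\le \tfrac{2\nu}{\nu+1}\tfrac{k}{K}$ for all $p\in\tX$, which I would obtain from a differential inequality for $\log u$ combined with a maximum-principle argument adapted to the complete (but in general noncompact) manifold $\tX$.

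First I would derive the differential inequality. On $\{u>0\}$, compute $\Delta_{g}\log u$ via the Chern--Lu Bochner formula for a holomorphic map between Kähler manifolds: in Kähler normal coordinates it exhibits $\Delta_{g}\log u$ as the sum of a nonnegative Cauchy--Schwarz remainder built from $\nabla(\partial f)$, a term linear in $\Ric_{g}$ contracted with $\partial f\otimes\overline{\partial f}$, and a term built from the curvature tensor $R^{Y}$ of $Y$ contracted twice against the endomorphism $H := \partial f\,(\partial f)^{*}$, the positive semidefinite Hermitian endomorphism of $T^{1,0}_{f(x)}Y$ obtained by pushing the metric of $\tX$ forward along $\partial f$, so that $\trace H$ is proportional to $u$ while $\rk H=\rk_{\C}(\partial f)\le\nu$. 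Discarding the nonnegative term and using $\Ric_{g}\ge k\,g$, the Ricci contribution is $\ge c_{1}k$ for a universal $c_{1}>0$; the delicate point is to control the target term from above using only the hypothesis that the \emph{holomorphic} sectional curvature of $h$ is $\le K$ — the stronger bound on the holomorphic \emph{bisectional} curvature being unavailable — together with the rank constraint $\rk H\le\nu$. (This is exactly the difference between the present ``energy density'' form of the Schwarz lemma and the ``metric'' form $f^{*}h\le c\,g$, which does require a bisectional bound.)

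The main obstacle is the algebraic lemma that does this. Diagonalise $H=\sum_{a}\lambda_{a}\,e_{a}\otimes\bar e_{a}$ in an $h$-orthonormal frame, with $\lambda_{a}\ge0$, at most $\nu$ of them nonzero, and $\sum_{a}\lambda_{a}$ proportional to $u$; then the target term is, up to the same proportionality factors, $\sum_{a,b}R^{Y}_{a\bar a\, b\bar b}\,\lambda_{a}\lambda_{b}$, and what is needed is Royden's lemma: for any Kähler curvature tensor with holomorphic sectional curvature $\le K$ and any such $\lambda$,
\[
\sum_{a,b}R^{Y}_{a\bar a\, b\bar b}\,\lambda_{a}\lambda_{b}\;\le\;\frac{\nu+1}{2\nu}\,K\Big(\sum_{a}\lambda_{a}\Big)^{2}.
\]
I would prove this by polarising the quartic form $v\mapsto R^{Y}(v,\bar v,v,\bar v)$ restricted to $\Span\{e_{a}\}$ — averaging $R^{Y}$ evaluated at $\sum_{a}t_{a}e_{a}$ over the phases of the $t_{a}$, then optimising in the moduli $|t_{a}|^{2}$; the rank constraint is precisely what produces the sharp constant $\tfrac{\nu+1}{2\nu}$, and for $\nu=1$ the inequality is just the definition of holomorphic sectional curvature. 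Since $K<0$, feeding this back into the Bochner formula yields on $\{u>0\}$ a differential inequality $\Delta_{g}\log u\ge c_{1}k-c_{2}K\,u$ with $c_{1},c_{2}>0$ and, once the normalisations are tracked, $c_{1}/c_{2}=\tfrac{2\nu}{\nu+1}$ (for $\nu=1$ this is the classical $\Delta\log u\ge 2k-2Ku$ behind the Ahlfors--Schwarz lemma).

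Finally I would globalise, and it is here that completeness of $\tX$ and $\Ric_{g}\ge k$ are used, via the Laplacian comparison theorem (equivalently, the Omori--Yau maximum principle). Fix $p$ with $u(p)>0$, set $r=\dist(p,\cdot)$, and for $R$ large maximise $\psi:=(R^{2}-r^{2})^{2}u$ over the compact ball $\overline{B_{R}(p)}$; since $\psi$ vanishes on $\partial B_{R}(p)$ and $\psi(p)=R^{4}u(p)>0$, the maximum is attained at an interior point $q$ with $u(q)>0$ and $r(q)<R$. At $q$ one has $\nabla\log\psi=0$ and $\Delta\log\psi\le0$; expanding $\Delta\log\psi=\Delta\log u+2\Delta\log(R^{2}-r^{2})$ and using $\Delta r^{2}\le C(1+r\sqrt{-k})$ (Laplacian comparison, with the usual care at the cut locus via Calabi's trick) together with the inequality of the previous step, one bounds $R^{4}u(p)\le\psi(q)$ by an expression of the form $\tfrac{1}{-c_{2}K}\big(-c_{1}k\,R^{4}+O(R^{3})\big)$. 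Dividing by $R^{4}$ and letting $R\to\infty$ gives $u(p)\le \tfrac{c_{1}}{c_{2}}\tfrac{k}{K}=\tfrac{2\nu}{\nu+1}\tfrac{k}{K}$; since $p$ is arbitrary this is \eqref{eqn:Royden}. (Note $k=0$ then forces $u\equiv0$, i.e. $f$ constant, consistently with the classical Liouville-type statement.) I expect everything except Royden's algebraic lemma to be standard Schwarz-lemma technology.
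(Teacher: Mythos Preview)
The paper does not give its own proof of this theorem: it is quoted verbatim as Royden's result \cite{Ro80}, and only the key inequalities from Royden's argument are recalled later, in the proof of Lemma~\ref{lemma:roydenequality}, in order to analyse the equality case. Your sketch is a faithful reconstruction of Royden's proof and matches those recalled steps: your differential inequality $\Delta\log u\ge c_1k-c_2K\,u$ with $c_1/c_2=\tfrac{2\nu}{\nu+1}$ is exactly the paper's inequality~(i), $\Delta\log e(f)\ge 2k-\tfrac{\nu+1}{\nu}K\,e(f)$, and your ``Royden's algebraic lemma'' is precisely what one obtains by summing the paper's inequalities (ii) and (iii$'$). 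The only cosmetic difference is the globalisation: you use an explicit cutoff $(R^2-r^2)^2$ together with Laplacian comparison, whereas Royden invokes Yau's maximum principle directly; both are standard and use the same hypotheses (completeness of $\tX$ and the lower Ricci bound).
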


This theorem easily allows to prove the Milnor-Wood inequality for $\pm$-holomorphic representations (see Section \ref{sec:proof}). We will need, however, a closer inspection of the equality case. To do so, we adopt notations similar to Royden's: Let $\dev{}{z^\alpha}$ (resp. $\dev{}{w^j}$) be local normal coordinates on $\tX$ at a point $\tx_0$ (resp. $Y$ at $f(\tx_0)$), chosen in such a way that
$$
\dev{f^j}{z^\alpha} = \lambda_\alpha \delta_\alpha^j \dev{}{w^j}, \quad \lambda_\alpha \neq 0 \iff \alpha \leq \nu.
$$
With these notations, we have:
\begin{lemma}\label{lemma:roydenequality}
 Suppose that $f \colon \tX \to Y$ is as in Royden's Theorem \ref{thm:royden} and that equality holds generically in \eqref{eqn:Royden}. Then in fact it holds everywhere, and the rank of $\de f$ is constantly $\nu$. Then, using the coordinates chosen above, we have
 \begin{enumerate}
  \item $\lambda_1 = \dots = \lambda_\nu$, that is, $\big\|\dev{f}{z^1}\big\|^2 = \dots = \big\|\dev{f}{z^\nu}\big\|^2$; in particular, if $\nu = n$, then $f$ is a local isometry, up to a constant, hence, totally geodesic;
  \item For all $\alpha = 1, \dots, \nu$, $\dev{f}{z^\alpha}$ belongs to the subspace of $T^{1,0}Y$ where the maximum $K$ of the holomorphic sectional curvature is realized.
 \end{enumerate}
\end{lemma}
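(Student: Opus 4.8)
The plan is to go back through Royden's proof of Theorem \ref{thm:royden}, record every inequality that enters it, and then use the fact that equality in \eqref{eqn:Royden} collapses all of them at once. Write $c:=\frac{2\nu}{\nu+1}\frac{k}{K}$ for the right-hand side of \eqref{eqn:Royden}. First, equality holding generically already forces equality everywhere: $e(f)$ is smooth, hence continuous, is pointwise $\le c$ by Theorem \ref{thm:royden}, and coincides with the constant $c$ on a dense set, so $e(f)\equiv c$. If $k=0$ then $c=0$, so $\de f\equiv 0$, $f$ is constant and there is nothing to prove; so assume $k<0$, hence $c>0$ and $\de f$ never vanishes.

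Recall that a holomorphic map between Kähler manifolds is harmonic, and its energy density $e=e(f)$ obeys a Bochner–Weitzenböck identity (see \cite{Ro80}) of the form
\begin{equation*}
\tfrac12\,\Delta e \;=\; \big\|\nabla\partial f\big\|^2 \;+\; A \;-\; B,
\end{equation*}
where $A\ge k\,e$ since $\Ric^{\tX}\ge k$, and, in the normal coordinates fixed before the statement (so that $\partial f(\partial/\partial z^\alpha)=\lambda_\alpha\,\partial/\partial w^\alpha$), one has, in the relevant normalization, $e=\sum_\alpha\lambda_\alpha^2$ and the holomorphic bisectional curvature term $B=\sum_{\alpha,\beta}\lambda_\alpha^2\lambda_\beta^2\,R^Y_{\alpha\bar\alpha\beta\bar\beta}$. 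Royden estimates $B$ from above by combining a polarization inequality for Kähler curvature tensors, which from $H^Y\le K$ yields $B\le\frac{K}{2}\bigl[(\sum_\alpha\lambda_\alpha^2)^2+\sum_\alpha\lambda_\alpha^4\bigr]$, with the Cauchy–Schwarz inequality $\sum_\alpha\lambda_\alpha^4\ge\frac1r(\sum_\alpha\lambda_\alpha^2)^2$, where $r\le\nu$ is the number of nonzero $\lambda_\alpha$, i.e.\ the rank of $\de f$ at the point; since $K<0$ and $t\mapsto\frac{t+1}{t}$ is decreasing these give $B\le\frac{K(\nu+1)}{2\nu}e^2$. Feeding this into the identity and applying the Omori–Yau generalized maximum principle to $e$ — legitimate because $\tX$ is complete with Ricci bounded below — along a sequence realizing $\sup e$ on which $\Delta e$ is asymptotically nonpositive produces $\sup e\le c$, which is \eqref{eqn:Royden}.

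Assume now $e\equiv c$, so $\Delta e\equiv 0$. The numerical point is that $c$ is exactly the value for which $\frac{K(\nu+1)}{2\nu}c^2=k\,c$, so the identity rewrites as
\begin{equation*}
0\;=\;\big\|\nabla\partial f\big\|^2\;+\;(A-k\,e)\;+\;\Bigl(\tfrac{K(\nu+1)}{2\nu}e^2-B\Bigr),
\end{equation*}
and all three summands are pointwise $\ge 0$; hence each vanishes identically. From $\|\nabla\partial f\|^2\equiv 0$ we get that the second fundamental form of $f$ vanishes, i.e.\ $f$ is totally geodesic. From $\frac{K(\nu+1)}{2\nu}e^2-B\equiv 0$ we get equality at every step of the estimate of $B$: equality in $\frac{r+1}{r}\ge\frac{\nu+1}{\nu}$ forces $r=\nu$ at every point, so the rank of $\de f$ is constantly $\nu$; equality in Cauchy–Schwarz forces $\lambda_1^2=\dots=\lambda_\nu^2$, the first assertion; and equality in the polarization inequality, whose ``diagonal'' contributions are the holomorphic sectional curvatures $H^Y\bigl(\partial f(\partial/\partial z^\alpha)\bigr)\le K$, forces $H^Y\bigl(\partial f(\partial/\partial z^\alpha)\bigr)=K$ for each $\alpha$, i.e.\ each $\partial f(\partial/\partial z^\alpha)$ lies in the locus of $T^{1,0}Y$ where the maximum $K$ is attained (a linear subspace when $Y$ is Hermitian symmetric, the case relevant here), the second assertion. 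When moreover $\nu=n$, $\de f$ has full rank everywhere, and $\|\partial f(\partial/\partial z^1)\|^2=\dots=\|\partial f(\partial/\partial z^n)\|^2$ together with the choice of normal coordinates gives $f^*h=\mu^2 g$ for a positive function $\mu$; since $\|\de f\|^2$ is a constant multiple of $\mu^2$ and is constant, $\mu$ is constant and $f$ is a local homothety, consistently with being totally geodesic.

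The main obstacle is the equality analysis of the polarization inequality for holomorphic sectional curvature: once $\lambda_1=\dots=\lambda_\nu$ is known, one must verify that equality in $B\le\frac{K}{2}[(\sum\lambda^2)^2+\sum\lambda^4]$ indeed forces every image direction $\partial f(\partial/\partial z^\alpha)$ into the maximal-curvature locus, and — for the precise wording of the second assertion — that this locus may be taken to be a linear subspace (automatic for Hermitian symmetric $Y$). A secondary, purely bookkeeping point is checking that the rank conclusion is genuinely pointwise: this rests on the sign of $1-\frac{(r+1)\nu}{r(\nu+1)}=\frac{r-\nu}{r(\nu+1)}\le 0$ together with $k<0$, which is what lets ``$r=\nu$'' be read off at each point rather than only on a dense set; one should also double-check the normalizations so that the coincidence $\frac{K(\nu+1)}{2\nu}c^2=k\,c$ holds as used.
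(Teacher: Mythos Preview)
Your argument is correct and follows the same strategy as the paper's: retrace Royden's chain of inequalities and show that $e(f)\equiv c$ collapses every intermediate step; the paper stays with Royden's inequality for $\Delta\log e(f)$ rather than the Bochner identity for $\Delta e$, so it does not isolate a $\|\nabla\partial f\|^2$ term and deduces ``totally geodesic'' only for $\nu=n$, as a byproduct of the homothety, whereas your formulation yields it for every $\nu$. One correction to your parenthetical: the maximal-holomorphic-sectional-curvature locus is \emph{not} a linear subspace of $T^{1,0}_yY$, even for Hermitian symmetric $Y$ --- for $Y_{p,q}$ it is the cone $\{A:A^*A=\lambda I_q\}$, and finding the largest linear subspace it contains is precisely the task of Lemma~\ref{lemma:matrices} --- though this does not affect the present proof, which only asserts membership in that locus.
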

\begin{proof}
 First of all, remark that $e(f) = \|\de f\|^2$ is a continuous function, thus if equality holds generically in Theorem \ref{thm:royden} it holds everywhere. In particular, the rank cannot jump down, because of Royden's inequality.
 
 We now need to retrace Royden's proof to impose equality at all steps. The relevant ones for us are the following: For brevity of notations, denote by $S_{\alpha,\bar{\beta},\gamma,\bar{\delta}}$ the quantity $S\big(\dev{f}{z^\alpha}, \overline{\dev{f}{z^\beta}}, \dev{f}{z^\gamma}, \overline{\dev{f}{z^\delta}}\big)$, where $S$ is the Riemann curvature tensor of $Y$. Then:
 \begin{enumerate}[(i)]
  \item $\Delta \log e(f) \geq 2k - \frac{2}{e(f)} \sum_{\alpha,\gamma} S_{\alpha,\bar{\alpha},\gamma,\bar{\gamma}} \geq 2k - e(f) \frac{\nu+1}{\nu} K$ (\cite{Ro80}, Proposition 4);
  \item $\sum_\alpha S_{\alpha,\bar\alpha,\alpha,\bar\alpha} + 2 \sum_{\alpha \neq \gamma} S_{\alpha,\bar{\alpha},\gamma,\bar{\gamma}} \leq K \cdot e(f)^2$ (\cite{Ro80}, proof of the main Lemma);
  \item $S_{\alpha,\bar\alpha,\alpha,\bar\alpha} \leq K\|\dev{f}{z^\alpha}\|^4$ (hypothesis on the curvature).
 \end{enumerate}
 Remark also that the last inequality in (i) is obtained by summing (ii) to
 \begin{itemize}
  \item[(iii')] $\sum_\alpha S_{\alpha,\bar{\alpha},\alpha,\bar{\alpha}} \leq \frac{K}{\nu} e(f)^2$,
 \end{itemize}
 which is just the sum over all $\alpha$ of (iii) plus an application of the Cauchy-Schwarz inequality (recalling that $K \leq 0$).

 Now we impose $e(f) = 2 \frac{\nu}{\nu +1} \frac{k}{K}$. Then the first and last terms in (i) vanish; in particular, equality must hold everywhere. The conclusion is now straightforward, since by Cauchy-Schwarz $(\dev{f}{z^1},\dots,\dev{f}{z^\nu})$ must be a constant multiple of $(1, \dots, 1)$.
\end{proof}

\section{Proof of Theorem \ref{thm:main} and Corollary \ref{cor:standard}}\label{sec:proof}

We will split the proof into several intermediate results.

\begin{prop}\label{prop:MWinequality}
 Let $X$ be a compact Kähler manifold of dimension $n > 1$, $\Gamma = \pi_1(X)$ its fundamental group, $\rho \colon \Gamma \to G$ a representation to a Lie group of non-compact Hermitian type. Suppose that $\rho$ is deformable to a $\pm$-holomorphic representation. Then it satisfies a Milnor-Wood inequality:
 \begin{equation}\label{eqn:MWinequality}
 |\tau(\rho)| \leq \tau_{\max} = \frac{-2k}{n+1} \rk(G) \Vol(X).
 \end{equation}
\end{prop}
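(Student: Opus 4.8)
The plan is to reduce to the case where $\rho$ is $\pm$-holomorphic and then feed Royden's estimate into the identity relating $\tau$ and $E$. Since $\tau$ is a deformation invariant (the Lemma following the definition of $\tau$) and the right-hand side of \eqref{eqn:MWinequality} depends only on $(X,\omega_X)$ and $G$, I may replace $\rho$ by the $\pm$-holomorphic representation it deforms to; exchanging the complex structure on $Y$ if necessary (Remark \ref{rmk:positivity}), I may further assume that $\rho$ is holomorphic and $\tau(\rho)\geq 0$. In particular $\rho$ is semisimple, so there is a harmonic $\rho$-equivariant map $f\colon\tX\to Y$ (Corlette's theorem), and by hypothesis $f$ is holomorphic. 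Proposition \ref{prop:energyinequality} then gives equality in $E(\rho)\geq n|\tau(\rho)|$, whence
\begin{equation*}
 n\,\tau(\rho)=E(\rho)=\tfrac12\int_X e(f)\,\frac{\omega_X^n}{n!},
\end{equation*}
where $e(f)=\|\de f\|^2$ is the energy density of $f$.

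Next I would apply Royden's Theorem \ref{thm:royden} to $f$. Its hypotheses hold: $\tX$ is complete because $X$ is compact, and, being locally isometric to $X$, it has Ricci curvature bounded below by $k$; and $Y=G/K$ is a complete Kähler manifold whose holomorphic sectional curvature is bounded above by a constant $K<0$. At every point the holomorphic rank of $f$ is at most $n=\dim_\C\tX$, and $\nu\mapsto\frac{2\nu}{\nu+1}$ is increasing, so \eqref{eqn:Royden} gives the pointwise bound $e(f)\leq\frac{2n}{n+1}\,\frac{k}{K}$ on all of $X$. Integrating and using $\int_X\frac{\omega_X^n}{n!}=\Vol(X)$ yields
\begin{equation*}
 \tau(\rho)\ \leq\ \frac{1}{n+1}\,\frac{k}{K}\,\Vol(X).
\end{equation*}

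To finish I have to identify the constant $K$. In the normalization of the metric on $Y$ fixed by \eqref{eqn:KE} — the one for which $f^*c_1(K_{\check Y})$ is an integral cohomology class — the maximal holomorphic sectional curvature of $Y$ equals $K=-\dfrac{1}{2\,\rk(G)}$. For $G$ simple this is the classical curvature computation for an irreducible Hermitian symmetric space of non-compact type, the maximum being attained along a maximal polydisk (see the tables of \cite{He78,Li06} recorded in \cite{KoMa10}); the general semisimple case follows since the holomorphic sectional curvature of a product is the appropriate mean of those of the factors and $\rk(G)=\sum_i\rk(G_i)$, or else it may be split off at the outset using additivity of $\tau$ over the de Rham factors of $Y$. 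Substituting $\frac{k}{K}=-2k\,\rk(G)$ into the previous display gives $\tau(\rho)\leq\frac{-2k}{n+1}\rk(G)\Vol(X)=\tau_{\max}$, which is \eqref{eqn:MWinequality}.

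The one step I expect to require genuine care is this last one — checking, in the chosen normalization, that the maximal holomorphic sectional curvature of $Y$ is exactly $-\frac{1}{2\rk(G)}$; everything before it is a direct combination of Proposition \ref{prop:energyinequality} with the pointwise inequality of Royden's theorem, plus the elementary observation that the holomorphic rank of $f$ is at most $n$ and that $\nu\mapsto\frac{2\nu}{\nu+1}$ is increasing.
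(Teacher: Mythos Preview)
Your argument is correct and mirrors the paper's own proof: reduce to a holomorphic $\rho$ by deformation invariance of $\tau$, invoke the equality case of Proposition~\ref{prop:energyinequality} to get $n\,\tau(\rho)=E(\rho)=\tfrac12\int_X e(f)\,\omega_X^n/n!$, and then integrate Royden's pointwise bound using $\nu\le n$ and the monotonicity of $\nu\mapsto\frac{2\nu}{\nu+1}$. The paper is in fact terser than you are about the normalization of the metric on $Y$ (it simply asserts the final constant), so your caution about verifying $K=-\tfrac{1}{2\rk(G)}$ in the chosen normalization is well placed but not an additional gap relative to the original.
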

\begin{proof}
 Since the Toledo invariant does not change under deformation, we can assume that $\rho$ is $\pm$-holomorphic itself (or, indeed, holomorphic, see Remark \ref{rmk:positivity}). Then, just combine Royden's inequality with Lemma \ref{lemma:toledohiggs}, to obtain:
 $$
 \big|\tau(\rho)\big| \leq \frac{1}{n} E(\rho) = \frac{1}{m} \int_X e(f) \frac{\omega_X^n}{n!} \leq \frac{-2k}{n} \rk(G) \frac{\nu}{\nu +1} \Vol(X).
 $$
 Since $\nu \leq n$ implies $\frac{\nu}{\nu+1} \leq \frac{n}{n+1}$, the result is proved.
\end{proof}

From now on, we shall focus on the study of the case of equality, that is, maximal representations. Since every semisimple group $G$ splits as product of almost simple pieces, which are all Hermitian if and only if $G$ is, and since the Toledo invariant is additive under such a splitting, we shall suppose from now on that $G$ is simple. Also, thanks to Remark \ref{rmk:positivity}, we will simply speak about holomorphic maps or representations. Thanks to our discussion on the case of equality in Royden's Theorem, we immediately have:

\begin{lemma}\label{lemma:maximalholomorphic}
 Let $\rho \colon \Gamma \to G$ be a maximal holomorphic representation of a Kähler group to a non-compact Lie group of Hermitian type $G$. Then the holomorphic $\rho$-equivariant map $f \colon \tX \to Y$ gives an isometric and biholomorphic embedding (up to rescaling the metric on $\tX$) with its image.
\end{lemma}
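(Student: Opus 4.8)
The plan is to reduce everything to the equality case of Royden's theorem, treated in Lemma \ref{lemma:roydenequality}, applied to $\tX$ with (a rescaling of) the lifted Kähler metric and to $Y$ with its symmetric metric. First I would observe that since $\rho$ is maximal, the chain of inequalities in the proof of Proposition \ref{prop:MWinequality} is an equality throughout; in particular $|\tau(\rho)| = \frac{1}{n}E(\rho)$ forces (by Proposition \ref{prop:energyinequality}) $\rho$ to be $\pm$-holomorphic, so after passing to $\rho^{ss}$ and possibly exchanging the complex structure on $Y$ (Remark \ref{rmk:positivity}) we have a genuine holomorphic harmonic $\rho$-equivariant map $f\colon\tX\to Y$. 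Moreover the inequality $\int_X e(f)\,\frac{\omega_X^n}{n!} \le \frac{-2k}{1}\rk(G)\frac{\nu}{\nu+1}\Vol(X)$ is saturated, which has two consequences: $\nu = n$ (the holomorphic rank of $f$ is full, since $\frac{\nu}{\nu+1} < \frac{n}{n+1}$ whenever $\nu<n$, and $e(f)\ge 0$), and $e(f)$ achieves the pointwise Royden bound $\frac{2n}{n+1}\frac{k}{K}$ almost everywhere. Here $K<0$ is the (constant, since $Y$ is symmetric, or at least sup of) holomorphic sectional curvature bound for $Y$; strictly one should note $Y$ need not have $K<0$ if it is not of rank one, so I would either invoke that a maximal holomorphic map lands in a tube-type or rank-one part, or — more simply — replace the naive Royden bound by the fact that equality in step (i) of Lemma \ref{lemma:roydenequality}'s proof already yields the conclusions without needing a global negative curvature bound, only at the points in the image.

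The second step is to invoke Lemma \ref{lemma:roydenequality} directly. Since equality holds generically in \eqref{eqn:Royden}, that lemma tells us it holds everywhere, the rank of $\de f$ is constantly $\nu = n$, and — this is conclusion (1) with $\nu = n$ — the map $f$ is a local isometry up to a constant, hence totally geodesic. Rescaling the metric on $\tX$ by that constant (which is legitimate since $k$ and hence $\tau_{\max}$ scale accordingly, and the statement allows rescaling), $f$ becomes a local isometry. Being a local isometry and holomorphic, $\de f$ is everywhere injective, so $f$ is an immersion; being totally geodesic and defined on the complete simply connected $\tX$ mapping into the complete $Y$, its image is a totally geodesic complex submanifold and $f$ is a covering onto it, hence (as the image is simply connected, being totally geodesic in the Hadamard manifold $Y$) a biholomorphic isometry onto a totally geodesic complex submanifold of $Y$. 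This gives the isometric, biholomorphic embedding onto its image asserted in the lemma.

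The main obstacle I anticipate is the curvature-bound bookkeeping in the first paragraph: Royden's theorem as quoted requires $Y$ to have holomorphic sectional curvature bounded \emph{strictly} above by $K<0$, which fails for a general Hermitian symmetric $Y$ of higher rank (it has flat holomorphic directions along polydiscs). The clean fix is to replace the crude application of Theorem \ref{thm:royden} by the pointwise differential inequality of step (i) in Lemma \ref{lemma:roydenequality}: equality there, combined with the Cauchy–Schwarz saturation, forces $\dev{f}{z^\alpha}$ to lie in the locus where the holomorphic sectional curvature attains its maximum (conclusion (2) of that lemma), and forces all the $\lambda_\alpha$ equal. One then argues that if $f$ has full rank $n$ and maximality forces $e(f)$ to meet the bound, the image directions span a subspace of $T^{1,0}Y$ on which the metric is, up to scale, standard, so $f$ is totally geodesic onto that subspace. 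The rest of the proof — that a holomorphic totally geodesic local isometry from a complete simply connected manifold is a biholomorphic isometric embedding onto its image — is standard Riemannian geometry (Cartan–Hadamard) and needs no essentially new idea. A secondary nuisance is making the rescaling constant explicit and checking it is the same at every point, but that is exactly conclusion (1) of Lemma \ref{lemma:roydenequality} once we know $\nu = n$.
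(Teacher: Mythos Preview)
Your approach is essentially the paper's: saturate the chain of inequalities in Proposition~\ref{prop:MWinequality} to force $\nu = n$ and pointwise equality in Royden's bound, then invoke Lemma~\ref{lemma:roydenequality}(1) to get a local isometry up to scale, and finally argue global injectivity. Two small comments. First, your ``main obstacle'' is not an obstacle at all: an irreducible Hermitian symmetric space of noncompact type has holomorphic sectional curvature pinched between $-1$ and $-1/\rk(G)$ (see the normalization in Lemma~\ref{lemma:matrices} and the explicit computations in Section~\ref{sec:prooflemma}); it is the \emph{full} sectional curvature, not the holomorphic one, that has flat directions in higher rank, so Theorem~\ref{thm:royden} applies directly with $K = -1/\rk(G)$ and no workaround is needed. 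Second, for injectivity the paper uses a shorter argument than your covering/Cartan--Hadamard route: $Y$ is uniquely geodesic (being Hadamard), and a local isometry from a complete space into a uniquely geodesic one is injective, since two distinct preimages of a point would map a geodesic segment between them to a nonconstant geodesic loop in $Y$.
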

\begin{proof}
 From the proof of Proposition \ref{prop:MWinequality} it is clear that if equality holds then $\nu = n$, that is, $f$ is a generic immersion. As in the proof of Lemma \ref{lemma:roydenequality} one passes easily from generic immersion to a genuine immersion. Applying this same lemma, then, $f$ is proven to be a local isometry with its image (possibly after rescaling the metric on $\tX$). But then, $Y$ is uniquely geodesic, thus a local isometry must be injective, as well. An injective holomorphic immersion is a biholomorphism with the image, concluding the proof.
\end{proof}

We need to fix some notation for the following. Let $\rho \colon \Gamma \to \SU(p,q)$ be a semisimple representation and $(\calE, \Phi)$ be the Higgs bundles associated to this representation and some harmonic metric $f$. Split $\Phi = \Phi^+ + \Phi^-$, according to the decomposition into $\pm i$-eigenspaces of the almost complex structure $Z$ on $Y_{p,q}=\SU(p,q)/S(U(p) \times U(q))$. Then there is a holomorphic splitting $\calE = V \oplus W$, such that $\Phi^+$ maps $W$ to $V \otimes \Omega_X^1$ and $\Phi^-$ maps $V$ to $W \otimes \Omega_X^1$. Write $\beta \colon W \otimes T^{1,0} X \to V$ for the composition of $\Phi^+$ with the contraction of holomorphic vector fields with holomorphic 1-forms (and $\gamma \colon V \otimes T^{1,0}X \to W$ for the analogous construction with $\Phi^-$). We have the following Lie algebra result:

\begin{lemma}\label{lemma:matrices}
 Let $G$ be a classical Lie group of non-compact Hermitian type and $Y$ its associated symmetric space, which we normalize so as to have holomorphic sectional curvature pinched between $-1$ and $-\frac{1}{\rk(G)}$. Suppose that at a point $y \in Y$ there is a complex $n$-subspace of the holomorphic tangent bundle $\calT_yY$, with $n \geq 2$, entirely contained in the locus $\calL$ of maximal holomorphic sectional curvature $-\frac{1}{\rk(G)}$. Then necessarily $G = \SU(p,q)$, with $p \geq nq$ (or $q \geq np$). If $f \colon \tX \to Y_{p,q}$ is an immersion such that $\de f(T^{1,0}\tX) \subseteq \calL$, then $\beta$ is injective.
\end{lemma}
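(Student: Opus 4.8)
The plan is to reduce both assertions to an explicit, case-by-case description of the locus $\calL$ of maximal holomorphic sectional curvature in each irreducible classical Hermitian symmetric space, and then to argue by elementary linear algebra. Normalize $Y$ so that its holomorphic sectional curvature is pinched in $[-1,-\tfrac1{\rk(G)}]$ and realize $Y$ as a bounded symmetric domain, so that $\calT_yY$ becomes the associated Jordan triple system: $p\times q$ complex matrices for $G=\SU(p,q)$ (type $A_{III}$), symmetric $n\times n$ matrices for $\Sp(2n,\R)$ (type $C_I$), skew-symmetric $n\times n$ matrices for $\SO^*(2n)$ (type $D_{III}$), and a complex vector space with a nondegenerate symmetric bilinear form $Q$ for $\SO_0(n,2)$ (type $BD_I$). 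In each case the holomorphic sectional curvature at a vector $Z$ is an explicit rational function --- of the singular values of $Z$ in the matrix cases, and of $|Q(Z)|^2/\|Z\|^4$ in the last case --- and maximizing it over unit vectors pins down $\calL$: for $\SU(p,q)$ with $p\ge q$ it is $\{Z\neq0 : Z^*Z\in\C\cdot I_q\}$ (the rank-$q$ matrices with all singular values equal), and the transpose set when $q\ge p$; for $\Sp(2n,\R)$ and $\SO^*(2n)$ it is the set of matrices whose nonzero singular values are all equal and whose rank is maximal; and for $\SO_0(n,2)$ it is the set of nonzero complex multiples of $Q$-real vectors. Carrying out these curvature computations is the one genuinely laborious point, and I would relegate it to Section~\ref{sec:prooflemma}.

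Granting this, the first assertion follows by a uniform trick. Let $L\subseteq\calT_yY$ be a complex-linear subspace with $\dim_\C L=n\ge2$ and $L\setminus\{0\}\subseteq\calL$. Comparing the defining relation of $\calL$ for the four vectors $Z_1,Z_2,Z_1+Z_2,Z_1+iZ_2$ of $L$ forces, in the matrix cases, $Z_1^*Z_2\in\C\cdot I$ (respectively $Z_1\bar Z_2\in\C\cdot I$ for $\Sp(2n,\R)$ and $\SO^*(2n)$) for all $Z_1,Z_2\in L$. In the $\Sp(2n,\R)$ and $\SO^*(2n)$ cases, together with $Z_i\bar Z_i\in\C^*\cdot I$ and the invertibility of the $Z_i$, this gives $Z_2\in\C\cdot Z_1$, so $\dim_\C L\le1$, a contradiction; for $\SO_0(n,2)$ the analogous computation (specializing to $t=i$ in $Z_1+tZ_2$) forces Cauchy--Schwarz equality between $Z_1$ and $Z_2$, hence again $Z_2\in\C\cdot Z_1$ and a contradiction. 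Thus $G=\SU(p,q)$. Assuming $p\ge q$, equip $L$ with the Hermitian form $h$ determined by $Z_1^*Z_2=h(Z_1,Z_2)I_q$; it is positive definite since $\|Z\|^2=q\,h(Z,Z)$, so we may choose a basis $Z_1,\dots,Z_n$ of $L$ with $Z_i^*Z_j=\delta_{ij}I_q$. Then the $q$ columns of each $Z_i$ are orthonormal, and are orthogonal to the columns of $Z_j$ for $i\neq j$; this exhibits $nq$ orthonormal vectors in $\C^p$, whence $p\ge nq$. (If $q\ge p$ one gets $q\ge np$ by transposition.)

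For the second assertion, recall that fibrewise $\beta$ is the $\p^+$-component of $\de f$, so that $\beta(w\otimes v)=\de f(v)(w)$ for $w\in W_x$, $v\in T^{1,0}_xX$, where $\de f(v)\in\p^+=\Hom(W_x,V_x)$; since $f$ is a holomorphic immersion, $v\mapsto\de f(v)$ is an injective $\C$-linear map of $T^{1,0}_xX$ into $\calT_{f(x)}Y_{p,q}$, and its image lies in $\calL$ by hypothesis. Applying the computation of the previous paragraph to $L=\de f(T^{1,0}_xX)$, choose a basis $v_1,\dots,v_n$ of $T^{1,0}_xX$ with $\de f(v_i)^*\de f(v_j)=\delta_{ij}I_q$. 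Then the subspaces $\de f(v_i)(W_x)\subseteq V_x$ are mutually orthogonal and each $\de f(v_i)$ is injective, so $\sum_i\de f(v_i)(w_i)=0$ forces every $w_i=0$; that is, $\beta$ is fibrewise injective.

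The main obstacle is the first paragraph: there is no conceptual shortcut to reading $\calL$ off the curvature tensors of the classical domains, and one must take some care with the parity of $n$ in the $\SO^*(2n)$ case --- where for $n$ odd the elements of $\calL$ are singular and the ``$Z_1\bar Z_2\in\C\cdot I$'' step needs supplementing, e.g.\ by tracking how the (one-dimensional) common kernel varies --- as well as with the sporadic low-rank isomorphisms such as $\SO^*(6)\cong\SU(3,1)$, $\SO_0(4,2)\cong\SU(2,2)$ and $\SO_0(6,2)\cong\SO^*(8)$, checking they are consistent with the stated dichotomy. Once $\calL$ is identified, the linear algebra of the last two paragraphs is routine.
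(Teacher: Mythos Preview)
Your strategy coincides with the paper's: identify $\calL$ case by case via the explicit curvature formulas, then compare $Z_1,Z_2,Z_1+Z_2,Z_1+iZ_2$ to force $Z_1^*Z_2\in\C\cdot I_q$ (this is exactly the computation \eqref{eqn:aij} in the paper's Lemma~\ref{lemma:curvatureequality}). Your packaging of the $\SU(p,q)$ case via the Hermitian form $h$ determined by $Z_1^*Z_2=h(Z_1,Z_2)I_q$ is a clean reformulation of the paper's Gram--Schmidt step, and it immediately gives both $p\ge nq$ and the fibrewise injectivity of $\beta$. Your arguments for $\Sp(2n,\R)$, $\SO_0(n,2)$, and $\SO^*(2n)$ with $n$ even are correct and match the paper's (the paper reduces the last of these to the $\SU(n,n)$ case, which amounts to your invertibility argument).

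The one genuine gap is the odd $\SO^*(2n)$ case, and your suggestion to ``track the one-dimensional kernel'' is too vague to count as a proof. The paper's argument here is substantially more laborious and does not proceed along those lines. For $n\ge 9$ odd it intersects the four codimension-one \emph{large} eigenspaces $V_{1,0},V_{0,1},V_{1,1},V_{1,i}$ (on which $(tA+sB)^*(tA+sB)$ acts as a scalar) to get a subspace $W$ of codimension $\le 4$, then reruns the $Z_1^*Z_2\in\C\cdot I$ argument on the first $n-4$ columns; since $2(n-4)>n$, this forces linear dependence. For $n=5,7$ this dimension count fails, and the paper instead computes the Levi form $\partial\bar\partial F$ of the defining function $F(A)=\big(\trace(A^*A)\big)^2-c\,\trace\big((A^*A)^2\big)$ at a Youla-normal point, shows it is negative semidefinite with an explicit small kernel, and then checks by hand that $F<0$ along every nontrivial direction in that kernel. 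If you want to keep your approach, you should either carry out these computations or supply a concrete argument replacing them; the kernel-variation idea, as stated, does not obviously do either.
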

The proof of this lemma is a rather long matrix computation and will be postponed until the next section. As an immediate consequence of Lemmas \ref{lemma:maximalholomorphic} and \ref{lemma:matrices}, if $\rho$ is maximal and (deformable to) holomorphic, then $G$ is necessarily $\SU(p,q)$, so from now on we will stick to this situation, and assume $p \geq q$.

We now state the main point that allows us to extend our results from a holomorphic representation $\rho$ to all the other representations in the same connected component of $\M$. This is the crucial point where the hypothesis $n > 1$ is used.

\begin{lemma}\label{lemma:holomorphiccomponent}
 Suppose that $\Gamma = \pi_1(X, x)$ is the fundamental group of a compact Kähler manifold of dimension $n > 1$. Suppose that $\rho \colon \Gamma \to \SU(p,q)$ is a maximal $\pm$-holomorphic representation. Then every semisimple representation $\rho'$ in the same connected component as $\rho$ is $\pm$-holomorphic, as well.
\end{lemma}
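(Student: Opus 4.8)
The plan is to first promote $\rho'$ to a maximal representation and then to reduce the statement to the vanishing of one component of its Higgs field. Since the Toledo invariant is locally constant, $\tau(\rho')=\tau(\rho)=\pm\tau_{\max}$, so $\rho'$ is maximal as well; after possibly replacing $Y$ by its complex conjugate (Remark \ref{rmk:positivity}) I may assume $\tau(\rho')=+\tau_{\max}>0$ and must show that $\rho'$ is $+$-holomorphic. Let $(\calE',\Phi')$ be a Higgs bundle attached to $\rho'$ and a harmonic metric, with the $Z$-eigenspace decomposition $\calE'=V'\oplus W'$, and let $\beta'\colon W'\otimes T^{1,0}X\to V'$ and $\gamma'\colon V'\otimes T^{1,0}X\to W'$ be the bundle maps obtained by contracting the two $Z$-eigencomponents of $\Phi'$, as in the discussion preceding Lemma \ref{lemma:matrices}. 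By the computation inside the proof of Proposition \ref{prop:energyinequality} one has $E(\rho')-n\tau(\rho')=2\|\gamma'\|^2_{L^2}\ge 0$; since $\rho'$ is semisimple, that proposition says $\rho'$ is $+$-holomorphic exactly when $E(\rho')=n\tau(\rho')$, i.e.\ exactly when $\gamma'\equiv0$. So it suffices to prove $\gamma'\equiv0$.

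Next I would record the rigidity of the reference representation $\rho$. Being maximal and holomorphic, by Lemma \ref{lemma:maximalholomorphic} its harmonic map $f$ is an isometric biholomorphic embedding of $\tX$ onto a totally geodesic complex submanifold of $Y$ which, by Lemma \ref{lemma:roydenequality}(2), lies entirely in the locus of maximal holomorphic sectional curvature; hence $\tX$ has constant holomorphic sectional curvature, so $\tX\cong\B^n$ and $X$ is a ball quotient. In particular $X$ is Kähler--Einstein with $\Ric(X)=k\,\omega_X$, $k<0$, so $T^{1,0}X$ and $\Omega^1_X$ are $\omega_X$-stable, of opposite sign slopes. Moreover, by Lemma \ref{lemma:matrices}---the one place where $n>1$ is used---we get $G=\SU(p,q)$ with $p\ge nq$, $\gamma=0$, and $\beta\colon W\otimes T^{1,0}X\to V$ injective as a bundle map. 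The degree $\deg_{\omega_X}W$ is determined by $\tau(\rho)=\tau_{\max}$ through the standard relation between the Toledo number and the degrees of an $\SU(p,q)$-Higgs bundle (it equals $\tfrac{q}{n+1}\deg_{\omega_X}K_X$); together with injectivity of $\beta$, which gives $\deg_{\omega_X}\image\beta=\deg_{\omega_X}(W\otimes T^{1,0}X)$, this shows that the $\Phi$-invariant subsheaf $W\oplus\image\beta\subseteq\calE$ has slope $0$, i.e.\ it is a polystable direct summand. Finally all these degrees, computed through the continuously varying $C^\infty$ splitting into $Z$-eigenbundles, are deformation-invariant, hence equal to the corresponding degrees for $\rho'$.

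The heart of the proof is then a semistability argument for the polystable Higgs bundle $(\calE',\Phi')$ attached to the semisimple representation $\rho'$. One feeds the inequality $\mu(\calF)\le\mu(\calE')=0$ to the saturated $\Phi'$-invariant subsheaves built from the two components of $\Phi'$---principally $W'\oplus\overline{\image\beta'}$, $V'\oplus\overline{\image\gamma'}$ and $\ker\gamma'\subseteq V'$---and combines the resulting degree inequalities with (i) the pinned values of $\deg_{\omega_X}V'$ and $\deg_{\omega_X}W'$ from the previous step and (ii) the stability of $T^{1,0}X$ on the ball quotient $X$, which controls the slopes of subsheaves of $W'\otimes T^{1,0}X$. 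Since for the reference $\rho$ all of these inequalities are already equalities---precisely because there $\beta$ is injective and $\gamma=0$---for $\rho'$ they are forced to be equalities too; unwinding this forces first $\ker\beta'=0$ (so $\beta'$ is injective with $\deg_{\omega_X}\overline{\image\beta'}=\deg_{\omega_X}V'$) and then $\overline{\image\gamma'}=0$, i.e.\ $\gamma'\equiv0$. By the first paragraph this gives $E(\rho')=n\tau(\rho')$, so $\rho'$ is $+$-holomorphic.

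I expect the last step to be the main obstacle: the degree computations are delicate because the kernels and images of $\beta'$ and $\gamma'$ need not be subbundles, so one must consistently pass to saturations and keep track of torsion contributions, and one must verify that the chain of inequalities is rigid enough to force $\gamma'=0$ rather than merely to bound it. It is exactly here that the hypothesis $n>1$ is indispensable: for $n=1$ Lemma \ref{lemma:matrices} is vacuous, the topological type of the component is not constrained to be the extremal one, the inequalities above acquire slack, and indeed the conclusion genuinely fails---by Theorem \ref{thm:classification}, a maximal connected component of a surface-group character variety contains many semisimple representations that are not holomorphic, the holomorphic ones being only the energy-minimizers in that component.
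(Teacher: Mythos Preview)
Your approach is genuinely different from the paper's, and the step you flag as ``the main obstacle'' is exactly where the paper takes a different route that avoids the difficulty entirely.

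You attempt a direct Higgs-bundle semistability computation in the style of Koziarz--Maubon: first deduce from the reference $\rho$ that $X$ is a ball quotient (so $T^{1,0}X$ is stable), pin down the degrees of $V',W'$ topologically, and then hope that feeding the $\Phi'$-invariant subsheaves $W'\oplus\overline{\image\beta'}$, $\ker\gamma'$, etc.\ into polystability forces $\gamma'=0$. The gap is the inference ``since for $\rho$ these inequalities are equalities, for $\rho'$ they are forced to be equalities too'': the degrees of $V',W'$ are deformation-invariant, but the degrees of $\image\beta'$, $\ker\gamma'$ are not, so nothing a priori transports the equalities from $\rho$ to $\rho'$. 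Making this rigid is precisely the hard part of the Koziarz--Maubon program, and it is only known to go through for $\rk G\le 2$; you would essentially be reproving (a case of) the open conjecture.

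The paper sidesteps all of this with a short open--closed argument. For maximal representations, ``$f$ holomorphic'' is \emph{equivalent} to ``$\beta$ injective'': one direction is Lemma \ref{lemma:matrices}, and the other is the Koziarz--Maubon remark that $\beta$ injective together with $n\ge 2$ and $\Phi\wedge\Phi=0$ forces $\gamma=0$ (this is where $n>1$ enters, and it is the only place). Now ``$\beta$ injective'' is an \emph{open} condition on $\Hom(\Gamma,\SU(p,q))^{ss}$, since it is the non-vanishing of a minor of $\Phi$ and the assignment $\rho\mapsto(\calE,\Phi)$ is continuous. On the other hand, by Proposition \ref{prop:energyinequality} ``holomorphic'' is the same as $E(\rho)=n\tau_{\max}$, and properness of $E$ makes this a \emph{closed} condition. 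An open-and-closed subset is a union of connected components, and the lemma follows. This argument uses neither the ball-quotient structure, nor stability of $T^{1,0}X$, nor any saturation or torsion bookkeeping; the entire semistability computation you propose is replaced by the single topological observation that the two equivalent conditions are respectively open and closed.
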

\begin{proof}
 This follows from a crucial remark in \cite{KoMa10}, which is as follows:
 \begin{equation}\label{eqn:KM}
 \text{If } \beta \colon W \otimes T^{1,0}X \to V \text{ is injective, } n \geq 2 \implies f \text{ is holomorphic, i.e. } \gamma = 0.
 \end{equation}
 They prove this remark by taking, at a given point $x \in X$, two linearly independent tangent vectors $\xi, \eta \in T_x^{1,0}X$, and exploiting the Higgs bundles relation $\Phi \wedge \Phi = 0$, that is, for every $v \in V_x$, $\beta(\xi) \gamma(\eta) v = \beta(\eta) \gamma(\xi) v$. Then by injectivity of $\beta$ one may conclude that $\gamma$ vanishes at $x$. This fact, together with Lemma \ref{lemma:matrices}, gives in fact an equivalence of the two notions of holomorphicity of $f$ and injectiveness of $\beta$ (for maximal $\rho$).
 
 Now remark that the condition of $\beta$ being injective is open in the set of semisimple representations, since the maps associating a Higgs bundle $(\calE, \Phi)$ to a semisimple representation is continuous and $\beta$ not being injective is a minor vanishing condition on $\Phi$. On the other hand, by Proposition \ref{prop:energyinequality}, under the hypothesis $\tau(\rho) = \tau_{\max} = \frac{q}{n} \Vol(X)$, a representation $\rho$ is holomorphic $\iff E(\rho) = \tau_{\max}$. Since the energy is a proper map on $\M$ (this is a standard application of Uhlenbeck's compactness criterion; see, for example, \cite{DaDoWe98}, Proposition 2.1), the conjugacy classes of holomorphic representations form a compact subset thereof, hence the preimage in $\Hom(\Gamma, \SU(p,q))^{ss}$ is a closed subset. Being open, as well, it must consist of connected components.
\end{proof}

We now want to exclude non-semisimple representations. For $n \geq 2$, this can be proved explicitly as follows: Let $\rho$ be a non-semisimple representation with $E(\rho) = |\tau(\rho)|$ (so that, in particular, its semisimplification $\rho^{ss}$ is holomorphic, and inequality \eqref{eqn:MWinequality} holds). Suppose that $\rho$ is maximal, and denote by $f$ a $\rho^{ss}$-equivariant holomorphic map and by $y_\infty$ a point at infinity in $Y_{p,q}$ fixed by $\rho(\Gamma)$. One can see that if $\tx_0$ is a base point, denoting by $o = f(\tx_0)$ and letting $\chi \in \g$ represent a vector pointing from $o$ to $y_{\infty}$, then $[\de f_{\tx_0}(\xi), \chi] = 0$ for every $\xi \in T_{\tx_0}\tX$. But it is easy to see from the proof of Lemma \ref{lemma:matrices} that the special form $\de f_{\tx_0}(\xi)$ must have implies that no $\chi$ can centralize both $\de f_{\tx_0}(\xi)$ and $\de f_{\tx_0}(\eta)$ if $\xi$ and $\eta$ are any two linearly independent vectors in $T_{\tx_0}\tX$. However, there is a shorter proof that gives a stronger result (and that works in the $n=1$ case, as well). This is essentially due to Burger--Iozzi \cite{BuIo07}; I thank Beatrice Pozzetti for pointing this out to me.

\begin{lemma}
 Let $X$ be any Kähler manifold, $\rho \colon \Gamma = \pi_1(X) \to G$ a representation to a Lie group of Hermitian type. Suppose that the Milnor--Wood inequality \eqref{eqn:MWinequality} holds for $\rho$ (for example, that $\tX = \B^n$ or that $\rho$ is deformable to a $\pm$-holomorphic representation). Then, if $\rho$ is maximal, the Zariski closure $G_0 = \overline{\rho(\Gamma)}$ is a Lie group of Hermitian symmetric type. In particular, $\rho$ is semisimple.
\end{lemma}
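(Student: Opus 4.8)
The plan is to reformulate maximality as a \emph{tightness} statement in the sense of Burger--Iozzi \cite{BuIo07} (see also \cite{BuIoWi10} for $\dim_\C X = 1$) and then to feed it into the structure theory of tight subgroups of Hermitian Lie groups. Recall that $\tau(\rho)$ is, up to a fixed positive constant determined by the normalization, the evaluation $\langle \rho^*_b\kappa_G \cup [\omega_X]^{n-1}, [X]\rangle$, where $\kappa_G \in H^2_{cb}(G;\R)$ is the bounded Kähler class, normalized so that $\|\kappa_G\| = \rk(G)$; with this realization the Milnor--Wood inequality refines to $|\tau(\rho)| \le \|\rho^*_b\kappa_G\|\cdot\frac{-2k}{n+1}\Vol(X)$, by estimating the pairing against the relative $\ell^1$-norm of $[\omega_X]^{n-1}$. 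Since $\rho$ factors through $G_0 = \overline{\rho(\Gamma)}$, we have $\rho^*_b\kappa_G = \rho^*_b(\kappa_G|_{G_0})$, whence
$$
\|\rho^*_b\kappa_G\| \le \|\kappa_G|_{G_0}\| \le \|\kappa_G\| = \rk(G).
$$
If $\rho$ is maximal, all of these are equalities, so the inclusion $G_0 \hookrightarrow G$ is \emph{tight}: $\|\kappa_G|_{G_0}\| = \rk(G)$.

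First I would show that $G_0$ is reductive. If not, let $R = R_u(G_0) \neq \{1\}$; since $\rho(\Gamma)$ normalizes $R$, the Borel--Tits theorem places $\rho(\Gamma)$ inside a proper parabolic $Q \subset G$ with $R \subseteq R_u(Q)$, and we fix a Levi decomposition $Q = L \ltimes R_u(Q)$. As $R_u(Q)$ is amenable, the inclusion $L \hookrightarrow Q$ induces an isometric isomorphism $H^\bullet_{cb}(Q) \xrightarrow{\sim} H^\bullet_{cb}(L)$ sending $\kappa_G|_Q$ to $\kappa_G|_L$, so $\|\kappa_G|_{G_0}\| \le \|\kappa_G|_Q\| = \|\kappa_G|_L\|$. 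But a proper parabolic of a Hermitian group corresponds to a proper boundary component of the associated bounded symmetric domain, and its Levi is, up to isogeny and compact and central factors, a product $L_h \times L_\ell$ with $L_h$ Hermitian of rank strictly smaller than $\rk(G)$, while $\kappa_G|_L$ is carried by $L_h$; hence $\|\kappa_G|_L\| \le \rk(L_h) < \rk(G)$, contradicting tightness. Therefore $R_u(G_0) = \{1\}$, $G_0$ is reductive, and in particular $\rho$ is semisimple.

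It remains to rule out non-Hermitian noncompact simple factors and a noncompact central torus in $G_0$. Writing $G_0$, up to isogeny, as $G_0^c \times A \times \prod_i G_i$ with $G_0^c$ compact, $A$ an $\R$-split torus, and the $G_i$ the noncompact simple factors, the groups $G_0^c$ and $A$ are amenable and each non-Hermitian $G_i$ has $H^2_{cb}(G_i;\R) = 0$, so $\kappa_G|_{G_0}$ is pulled back from the product of the Hermitian factors, and the computation of the norm of a sum of Kähler classes on a product gives $\|\kappa_G|_{G_0}\| = \sum_{i\text{ Herm.}}|c_i|\,\rk(G_i)$ for coefficients $|c_i| \le 1$. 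Tightness then forces
$$
\rk(G) = \sum_{i\text{ Herm.}}|c_i|\rk(G_i) \le \sum_{i\text{ Herm.}}\rk(G_i) \le \rk_\R(G_0) \le \rk_\R(G) = \rk(G),
$$
so every inequality is an equality; the equality $\sum_{i\text{ Herm.}}\rk(G_i) = \rk_\R(G_0)$ forces $A$ to be trivial and every noncompact simple factor $G_i$ to be Hermitian, which is exactly the assertion that $G_0$ is of Hermitian symmetric type.

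I expect the genuine content — the place where one really relies on Burger--Iozzi rather than on formal properties of bounded cohomology — to be twofold: first, the refinement of Milnor--Wood to the inequality involving $\|\rho^*_b\kappa_G\|$ in place of $\rk(G)$; and second, the structural input that passing to a proper parabolic, equivalently to a proper boundary component, strictly lowers the Hermitian rank, $\|\kappa_G|_L\| < \|\kappa_G\|$, together with the norm bookkeeping for Kähler classes on products. Both rest on the classification of parabolic subgroups of Hermitian Lie groups and the behaviour of the Bergman/Kähler cocycle under this stratification; the remaining ingredients — amenability of unipotent radicals, vanishing of $H^2_{cb}$ of compact and of non-Hermitian simple groups, and monotonicity of the norm under restriction — are soft.
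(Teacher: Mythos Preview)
Your approach via tightness is the right one in the locally symmetric case $\tX = \B^n$ --- it is essentially how Burger--Iozzi argue --- but there is a real gap under the other hypothesis the lemma allows. The refinement
\[
|\tau(\rho)| \le \|\rho^*_b\kappa_G\|\cdot\tfrac{-2k}{n+1}\Vol(X)
\]
that you need in order to extract tightness from maximality is not something the hypothesis hands you: the lemma only assumes the inequality with $\rk(G)$ on the right holds for this one $\rho$, and when that inequality comes from Royden's Ahlfors--Schwarz lemma (a pointwise curvature bound on the energy density of a holomorphic map) there is no bounded-cohomology content to refine. Nothing in the paper establishes an $\ell^1$-norm estimate on the Poincar\'e dual of $[\omega_X]^{n-1}$ for a general K\"ahler $X$, and the Royden argument certainly does not produce one. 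So your chain of norm inequalities, and with it the conclusion $\|\kappa_G|_{G_0}\| = \rk(G)$, never gets started in that case.

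The paper's argument is both simpler and more robust, and does not go through tightness at all. One observes that the equivariant map lands in the orbit $Y_0 = G_0/K_0 \hookrightarrow Y$; after splitting $Y_0$ into irreducible pieces and reducing to a single factor, one invokes the van Est isomorphism $H^2_{cb}(G_0,\R) \cong H^2\big(\calA^\bullet(Y_0)^{G_0}\big)$ together with the dichotomy that this space is $\R$ when $G_0$ is Hermitian and $0$ otherwise. In the non-Hermitian case the restricted K\"ahler form is therefore $G_0$-exact, $i^*\omega_Y = \de\eta$ with $\eta$ a $G_0$-invariant $1$-form, and Stokes gives
\[
\tau(\rho) = \frac{1}{n!}\int_X f_0^*i^*\omega_Y \wedge \omega_X^{n-1} = \frac{1}{n!}\int_X \de(f_0^*\eta) \wedge \omega_X^{n-1} = 0,
\]
contradicting maximality. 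This needs only \emph{vanishing} of the restricted class, not a norm estimate, so no refinement of Milnor--Wood is required, and your Borel--Tits and rank-bookkeeping steps become unnecessary.
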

\begin{proof}
 Let $K_0 = G_0 \cap K$ be a maximal compact subgroup of $G_0$, and write $Y_0 = G_0/K_0 = Y_0^{(1)}\times\dots\times Y_0^{(k)}$ for the decomposition into irreducible pieces. Then the Milnor--Wood inequality also holds for the projection of $\rho$ to any of the groups $\Isom(Y_0^{(i)})$, and $\rho$ is maximal if and only if each of these projections is. We may thus assume that $Y_0$ is irreducible itself.
 
 As in \cite{BuIo07}, \S 5, one has:
 $$
 H_{cb}^2(G_0, \R) \cong H_c^2(G_0, \R) \cong H^2(\calA^\bullet(Y_0)^{G_0}),
 $$
 where the first is the isomorphism between the bounded cohomology of $G_0$ and its continuous cohomology, and the last is the van Est isomorphism relating these spaces to the de Rham cohomology of $G_0$-invariant differential forms. Then, $H_{cb}^2(G_0, \R)$ is either isomorphic to $\R$ if $G_0$ is of Hermitian type, or $0$ otherwise. Because of this, denoting by $i \colon Y_0 \hookrightarrow Y$, we have that $i^*\omega_Y$ vanishes in cohomology if $G_0$ is not Hermitian, i.e. $i^*\omega_Y = \de \eta$ for some $G_0$-invariant 1-form $\eta$ on $Y_0$. Letting $f_0 \colon \tX \to Y_0$ be any $\rho$-equivariant smooth map, we have
 $$
 \tau(\rho) = \frac{1}{n!} \int_X f_0^*i^*\omega_Y \wedge \omega_X^{n-1} = \frac{1}{n!} \int_X \de f_0^*\eta \wedge \omega_X^{n-1} = 0,
 $$
 using $\de \omega_X = 0$ and Stokes theorem.
\end{proof}

Finally, let us explain how Corollary \ref{cor:standard} follows from Theorem \ref{thm:main} and \cite{Ha12}.

\begin{proof}[Proof of Corollary \ref{cor:standard}]
 Under the hypothesis that $\rho$ extends to $\SU(n,1)$, the equivariant harmonic map is just the totally geodesic map on the associated symmetric spaces $f \colon \B^n \to Y$. Hamlet \cite{Ha12} proves that every totally geodesic \emph{tight} map is (anti)-holomorphic. Thus we need only to see that a maximal representation factoring through $\SU(n,1)$ necessarily induces a tight map. The argument for doing that is standard and is taken essentially from \cite{BuIoWi10}.
 
 Recall that by the Van Est isomorphism, the bounded Kähler class $\kappa_{\B^n}^b$ essentially coincides with the Kähler form $\omega_X$ (and similarly on $Y$). Since $\mathbb{H}_{cb}^2(\SU(n,1), \R) \cong \R$, there is a $\lambda$ such that $f^*\kappa_{Y}^b = \lambda \kappa_{\B^n}^b$. One sees easily as in \cite{BuIoWi10} that $f$ is tight if, and only if, $\lambda = \rk(G)$. But then the Toledo invariant is
 $$
 \tau(\rho) = \frac{1}{n!} \int_X \lambda \omega_X^n = \lambda \Vol(X).
 $$
 Using Burger--Iozzi Theorem \eqref{eqn:MWBI}, it follows that any $\rho$ factoring through $\SU(n,1)$ is maximal if and only if $f$ is tight.
\end{proof}

\section{Proof of Lemma \ref{lemma:matrices}}\label{sec:prooflemma}

In order to prove Lemma \ref{lemma:matrices}, we need to recall briefly the structure of the simple Lie groups of Hermitian type and the definition of Hermitian sectional curvature.

\begin{defn}
 Let $Y$ be a Kähler manifold, and, in a local holomorphic frame $\{\dev{}{z^i}\}$, write $g_{i\bar{j}}$ for the metric tensor and $R_{i\bar{j}k\bar{\ell}}$ for the Riemann curvature tensor. The Hermitian sectional curvature along a holomorphic tangent vector $\xi = \xi^j \dev{}{z^j} \in T^{1,0}Y$ is defined by
 $$
 K_H(\xi, \xi) = \frac{R_{i\bar{j}k\bar{\ell}} \xi^i \bar\xi^j \xi^k \bar\xi^\ell}{\big(g_{i\bar{j}} \xi^i\bar\xi^j\big)^2}
 $$
\end{defn}

In the case where $Y$ is a Hermitian symmetric space associated to a matrix Lie group, we can endow it with the invariant metric
$$
g(A, B) = \trace\big(AB^*\big)
$$
(beware that in general, this differs from both the one induced by the Killing form and the one having minimal holomorphic sectional curvature $-1$ by some constants, but we will stick to this definition for the ease of computations). With this form, the holomorphic sectional curvature of a classical Hermitian symmetric space is given by
$$
 K_H(M, M) = -\frac{\trace\big([M, M^*]^2\big)}{\big(\trace(MM^*)\big)^2}, \quad M \in \p^{1,0} \subset \g^\C.
$$
We will now distinguish between the four different classes of the classical Lie group of Hermitian type to identify the locus maximizing the holomorphic sectional curvature (the bounds on this quantity are classic, but for completeness they will be proved in Lemma \ref{lemma:curvatureequality}, as well).
\begin{itemize}
 \item $G = \SU(p,q)$: In this case, $\p^{1,0}$ consists of matrices of the form $M = \begin{pmatrix}0&A\\0&0\end{pmatrix}$, with $A$ a complex valued $p \times q$ matrix. In particular, $\|M\|^2 = \trace(M^*M) = \trace(A^*A)$ and $\trace\big([M, M^*]^2\big) = 2 \trace\big((A^*A)^2\big)$. Thus the holomorphic sectional curvature along $M$ satisfies:
$$
-2 \leq K_H(M, M^*) \leq -\frac{2}{\min(p,q)}.
$$
 \item $G = \Sp(2n, \R)$: A matrix $M$ is in $\p^{1,0}$ if it is of the form $\begin{pmatrix}A & iA\\iA & -A\end{pmatrix}$, with $A$ an $n \times n$ complex matrix such that $A^t = A$. In particular, $\trace\big([M, M^*]^2\big) = 32 \trace\big((A^*A)^2\big)$ and $\trace(M^*M) = 4\trace(A^*A)$. This implies
$$
-2 \leq K_H(M,M^*) \leq -\frac{2}{n}.
$$
 \item $G = \SO(p,2)$: Here, $\p^{1,0}$ is made of matrices $M = \begin{pmatrix}0&A\\A^t&0\end{pmatrix}$ where $A$ is a $p \times 2$ matrix whose two vector columns are of the form $v$ and $iv$ for some vector $v \in \C^p$. Then we have $\trace\big([M, M^*]^2\big) = 16 \|v\|^4 - 8 |\scal{v}{\bar{v}}|^2$ and $\trace(M^*M) = 4\|v\|^2$. Thus
$$
-1 \leq K_H(M, M^*) \leq -\frac{1}{2}.
$$
 \item $G = \SO^*(2n)$: In this case $\p^{1,0}$ contains the matrices $M$ of the form $\begin{pmatrix}iA&-A\\-A&-iA\end{pmatrix}$, with $A$ a complex $n \times n$ matrix such that $A^t = -A$. Similar computations as in the case of $\Sp(2n, \R)$ show that $\trace\big([M, M^*]^2\big) = 32 \trace\big((A^*A)^2\big)$ and that $\trace(M^*M) = 4\trace(A^*A)$. However, in this case $A$ is skew-symmetric, and one has the stricter inequalities:
 $$
 -1 \leq K_H(M, M^*) \leq -\frac{1}{\lfloor n/2\rfloor}.
 $$
\end{itemize}
With these preliminaries, Lemma \ref{lemma:matrices} is deduced from the following:
\begin{lemma}\label{lemma:curvatureequality}
 \begin{enumerate}
  \item Let $A$ be a non-zero $p\times q$ complex-valued matrix with $p \geq q$. Then
 \begin{equation}\label{eqn:pqinequality}
 \frac{1}{q} \leq \frac{\trace\big((A^*A)^2\big)}{\big(\trace(A^*A)\big)^2} \leq 1.
 \end{equation}
 The upper bound is reached by all matrices of rank $1$, the lower one by those satisfying $A^*A = \lambda I_q$, for some $\lambda > 0$. The maximal dimension of a linear space $L \subset M_{p\times q}(\C)$ such that every $A \in L$ realizes this minimum is $\big\lfloor \frac{p}{q} \big\rfloor$. More precisely, if $A_1, \dots, A_k$ are linearly independent matrices in such a $L$, then necessarily $(A_1,\dots,A_k)$ gives an immersion $\C^{kq} \hookrightarrow \C^p$.
 \item Let $v \in \C^p$ be a vector with $p \geq 2$. Then
 $$
 0 \leq \|v\|^4 - \big|\bigscal{v}{\bar{v}}\big|^2 \leq \|v\|^4.
 $$
 The upper bound is reached by vectors $v = (z_1,\dots,z_p)$ such that $\sum z_j^2 = 0$, and the lower bound by those such that $v \in U(1) \cdot \R^n$. The maximal dimension of a $\C$-linear subspace $L \subset \C^p$ such that every $v \in L$ realizes the minimum is $1$.
 \item Let $A$ be a non-zero $n \times n$ complex-valued skew-symmetric matrix (that is, $A^t = -A$). Then
 \begin{equation}\label{eqn:skewinequality}
 \frac{1}{2\lfloor\frac{n}{2}\rfloor} \leq \frac{\trace\big((A^*A)^2\big)}{\big(\trace(A^*A)\big)^2} \leq \frac{1}{2}.
 \end{equation}
 The upper bound is reached by rank $2$ matrices; if $n$ is even, the lower bound is realized by the matrices satisfying $A^*A = \lambda I_n$. If $n$ is odd by those such that $A^*A$ has a (strictly) positive eigenvalue of multiplicity $n-1$. Finally, the maximal dimension of a linear space $L$ such that every $A \in L$ realizes this minimum is $1$ for $n \geq 4$.
 \end{enumerate}
\end{lemma}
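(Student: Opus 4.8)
The plan is to reduce parts (1) and (3) to the behaviour of the eigenvalues of the positive semidefinite Hermitian matrix $A^*A$ --- using in part (3) the classical fact (Youla normal form) that the nonzero singular values of a complex skew-symmetric matrix occur with even multiplicity --- whereas part (2) is an elementary inequality for a single vector; the dimension bounds will all come from polarising the quadratic ``equality conditions''. First the inequalities: in part (1), write $\mu_1,\dots,\mu_q \geq 0$ for the eigenvalues of $A^*A$, not all zero, so that the middle term of \eqref{eqn:pqinequality} equals $\big(\textstyle\sum_i \mu_i^2\big)\big/\big(\sum_i \mu_i\big)^2$; the upper bound is $\sum \mu_i^2 \leq \big(\sum \mu_i\big)^2$ with equality iff at most one $\mu_i \neq 0$ (i.e.\ $\rk A \leq 1$), and the lower bound is Cauchy--Schwarz, $\big(\sum \mu_i\big)^2 \leq q\sum \mu_i^2$, with equality iff all $\mu_i$ coincide (i.e.\ $A^*A = \lambda I_q$, $\lambda>0$). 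Part (2) reduces to $|\scal{v}{\bar v}|^2 = \big|\sum_j v_j^2\big|^2$ together with $\big|\sum_j v_j^2\big| \leq \sum_j |v_j|^2 = \|v\|^2$ (equality iff the $v_j^2$ share a common argument, i.e.\ $v \in U(1)\cdot\R^p$) and $\big|\sum_j v_j^2\big|^2 \geq 0$ (equality iff $\sum v_j^2 = 0$). For part (3), by the Youla form the eigenvalues of $A^*A$ may be listed as $\mu_1,\mu_1,\dots,\mu_m,\mu_m$, plus one extra $0$ when $n$ is odd, with $m = \lfloor n/2 \rfloor$, so the middle term of \eqref{eqn:skewinequality} equals $\tfrac12\big(\sum\mu_i^2\big)\big/\big(\sum\mu_i\big)^2$ and the two bounds of part (1) yield \eqref{eqn:skewinequality}, with equality cases $A^*A = \lambda I_n$ for $n$ even and ``$A^*A$ has a single positive eigenvalue, of multiplicity $n-1$'' for $n$ odd.

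For the dimension bound in part (1), suppose every $A$ in a subspace $L \subset M_{p\times q}(\C)$ satisfies $A^*A = \lambda_A I_q$. Polarising (replace $A$ by $A+B$, then by $A+iB$) shows that $A^*B + B^*A$ and $A^*B - B^*A$ are both scalar, hence $A^*B \in \C\cdot I_q$ for all $A,B \in L$; for a basis $A_1,\dots,A_k$ write $A_i^*A_j = h_{ij}I_q$, so $(h_{ij})$ is Hermitian positive definite (positive-definiteness being equivalent to linear independence of the $A_i$), and after a change of basis of $L$ we may assume $A_i^*A_j = \delta_{ij}I_q$. This says precisely that $(v_1,\dots,v_k) \mapsto \sum_i A_i v_i$ is an isometric embedding $\C^{kq} \hookrightarrow \C^p$, so $kq \leq p$; since changing the basis of $L$ multiplies the stacked matrix $[A_1 \mid \cdots \mid A_k]$ on the right by an invertible matrix, injectivity of that matrix depends only on $L$, which gives the ``immersion'' assertion, and the block-diagonal construction attains $\lfloor p/q\rfloor$. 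For part (2), if $\dim_\C L \geq 2$ take linearly independent $u,v \in L$; after rescaling we may assume $u,v \in \R^p$, and then for every $(s,t)\in\C^2$ the requirement $\overline{su+tv}\in U(1)\cdot(su+tv)$ forces $\bar s = \mu s$ and $\bar t = \mu t$ for one common $\mu \in U(1)$; taking $(s,t)=(1,i)$ gives simultaneously $\mu = 1$ and $\mu = -1$, a contradiction --- so $\dim_\C L \leq 1$, which is attained by any complex line spanned by a real vector.

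It remains to bound $\dim L$ in part (3). For $n$ even the equality case forces $A^*A = \lambda_A I_n$ for all $A \in L$, so part (1) with $p = q = n$ gives $\dim L \leq \lfloor n/n\rfloor = 1$. For $n$ odd ($\geq 5$) the equality case forces every nonzero $A \in L$ to have rank exactly $n-1$ with $A^*A = \mu_A\big(I - P_{v_A}\big)$, where $\C v_A = \ker A$ and $P_{v_A}$ is the orthogonal projection onto it. The key point is that the kernel line $\C v_A$ is independent of $A \in L\setminus\{0\}$: granting this, conjugate so that $\ker A \equiv \C e_n$, observe that $A e_n = 0$ together with $A^t = -A$ forces the last row and column of every $A \in L$ to vanish, so $L$ sits inside the $(n-1)\times(n-1)$ skew-symmetric matrices, where the equality condition becomes $A^*A = \mu_A I_{n-1}$, and conclude via the even case that $\dim L \leq 1$. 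To prove the key point one can use that $\ker(sA_1 + tA_2)$ is spanned by the Pfaffian vector $\mathrm{pf}(sA_1+tA_2)$, a nowhere-vanishing homogeneous polynomial in $(s,t)$, so a non-constant kernel line would sweep out a genuine rational curve in $\mathbb{P}^{n-1}$; one then polarises the rigid identity $A^*A = \mu_A(I - P_{v_A})$ as in part (1), exploiting $v_A^*A^*A = 0 = A^*Av_A$ to pin $v_A$ down.

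I expect this last step --- ruling out a varying kernel line in the odd-dimensional skew-symmetric case --- to be the main obstacle, the ``scalar'' structure there being available only on a hyperplane rather than on the whole space, so that one must genuinely control how the omitted line depends on the matrix. Everything else is routine polarisation and eigenvalue bookkeeping, and the even-dimensional case of (3) is merely a special case of (1).
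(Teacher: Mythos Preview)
Your treatment of parts (1), (2), and the even case of (3) is correct and essentially the same as the paper's; your polarisation in (1), deducing $A^*B \in \C\cdot I_q$ directly and then diagonalising the Gram matrix $(h_{ij})$, is in fact a bit cleaner than the paper's column-by-column computation, but the content is identical.

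The genuine gap is part (3) for odd $n \geq 5$. You correctly observe that if the kernel line $\C v_A$ were constant on $L \setminus \{0\}$ then a unitary conjugation would embed $L$ into the $(n-1)\times(n-1)$ skew-symmetric matrices and the even case would finish. But you have not proved constancy, and the polarisation you sketch does not go through: in (1) the right-hand side of $A^*A = \lambda_A I_q$ is scalar, so the cross terms $A^*B + B^*A$ and $i(B^*A - A^*B)$ are forced to be scalar; here the right-hand side $\mu_A(I - P_{v_A})$ carries a rank-one perturbation that moves with $A$, and expanding $(sA+tB)^*(sA+tB)$ yields no usable identity for $A^*B$ alone. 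Note too that for $n=3$ every nonzero skew matrix realises the lower bound and the kernel line certainly varies, so any argument for constancy must genuinely use $n \geq 5$; the Pfaffian-vector observation only tells you the kernel moves algebraically, not that it is constant.

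The paper does not try to control the kernel. For $n \geq 9$ it intersects the four hyperplanes $V_{1,0}, V_{0,1}, V_{1,1}, V_{1,i} \subset \C^n$ (on each of which the corresponding $(sA+tB)^*(sA+tB)$ restricts to a scalar) to obtain a subspace $W$ of codimension at most $4$; on the columns indexed by $W$ the argument of part (1) applies verbatim and forces those $n-4$ columns of $B$ to equal $e^{i\theta}$ times those of $A$, whence $e^{i\theta}A - B$ has rank at most $4 < n-1$, contradicting the equality condition. For $n = 5$ and $n = 7$ this codimension count is too lossy, and the paper instead computes the Levi form $\partial\bar\partial F$ of $F(A) = (\trace A^*A)^2 - 2\lfloor n/2\rfloor\,\trace\big((A^*A)^2\big)$ at a reference point, shows it is negative semidefinite, and checks that $F$ is strictly negative along the nontrivial null directions. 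You will need to supply an argument of one of these two kinds to close the odd case.
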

\begin{proof}
 1. The inequalities are classical: $A^*A$ being Hermitian and positive definite, it has positive eigenvalues $\lambda_1, \dots, \lambda_q$. We are then stating that
  $$
  \frac{1}{q} \Big(\sum_j \lambda_j\Big)^2 \leq \sum_j \lambda_j^2 \leq \Big(\sum_j \lambda_j\Big)^2,
  $$
  which is clear. The equality on the right hand side holds if there is only one non-zero $\lambda_1$, and the one on the left if all the $\lambda_j = \lambda$ are equal, hence $A^*A = \lambda I_q$. Let $m = \lfloor \frac{p}{q} \rfloor$. Define $A_1, \dots, A_{m}$ as follows:
  \begin{equation}\label{eqn:Aistd}
  A_1 = \begin{pmatrix}I_q\\0_q\\0_q\\\vdots\end{pmatrix},\quad A_2 = \begin{pmatrix}0_q\\I_q\\0_q\\\vdots\end{pmatrix}, \quad A_3 = \begin{pmatrix}0_q\\0_q\\I_q\\\vdots\end{pmatrix},\dots
  \end{equation}
  where $0_q$ is the zero $q \times q$ matrix (and, if $p > mq$, each matrix $A_i$ is completed with zeros, as well). Then it is clear that for every $t_0, \dots, t_{m-1}$ we have
  $$
  (t_1 A_1 + \dots + t_m A_m)^*(t_1 A_1 + \dots + t_m A_m) = \sum_j |t_j|^2 I_q.
  $$
  We need to prove that there can be no $(m+1)$-dimensional complex linear space with this property. To that aim, we prove that any $k$-tuple of linearly independent matrices $A_1, \dots, A_k$ such that every non-zero linear combination $\sum_j t_j A_j$ realizes the minimum in \eqref{eqn:pqinequality} can be modified to another such $k$-tuple so that, moreover, every pair of column vector of any of these matrices are mutually orthogonal. This will clearly imply that $kq \leq p$ and also the claim about the injectivity of $(A_1,\dots,A_k)$. For brevity's sake, we will only consider two matrices $A$, $B$ such that $(tA + sB)^*(tA+sB) = \lambda_{t,s} I_q$ and modify them to such an ``orthonormal pair'', leaving the straightforward adaptations for the general case to the reader. First of all, rescale them to get $A^*A = B^*B = I_q$, so that, letting $v_1, \dots, v_q$ be the column vectors of $A$, $w_1,\dots,w_q$ those of $B$, each set consists of $q$ orthonormal vectors. Write $w_j = \sum_{i=1}^q a_{ij} v_i + w'$, with $w'$ orthogonal to each of the $v_i$'s. By hypothesis, there exist positive reals $\lambda, \mu$ such that $(A+B)^*(A+B) = \lambda I_q$ and $(A+iB)^*(A+iB) = \mu I_q$. Writing down these conditions explicitly, one finds
  \begin{equation}\label{eqn:aij}
  a_{ij} = \Big(\frac{\lambda-2}{2} + i \cdot \frac{2-\mu}{2}\Big) \delta_{ij} \quad 1 \leq i,j \leq q.
  \end{equation}
  Thus, either $B - \frac{1}{2} \big(\lambda-2 + i (2-\mu)\big) A$ has the required property, or it is zero, forcing $A$ and $B$ to be linearly dependent in the first place.
  
  2. One can repeat computations similar to the ones above; however, one can easily be more precise: The locus realizing the minimum $\|v\|^4 = |\scal{v}{\bar v}|^2$ consist of the vectors $v$ such that $v = e^{i\theta} \bar{v}$. All real vectors $v \in \R^p$ are in this locus, and in fact if $v \in \C^p$ is in there, then $e^{-i\theta/2}v$ is real. Thus $L \subset U(1) \cdot \R^p$ has at most dimension $1$.
  
  3. In this case, we make use of the Youla decomposition of complex skew-sym\-me\-tric matrices (cfr. \cite{Yo61}) to infer that, up to multiplying by a unitary matrix $U$ on the left and by $U^t$ on the right, any skew-symmetric matrix $A$ can be reduced to a block diagonal matrix with only $2 \times 2$ skew-symmetric blocks with real entries (plus one zero if $n$ is odd). This forces the non-zero eigenvalues of $A^*A$ to appear in pairs. Writing $\lambda_1, \lambda_1, \dots, \lambda_k, \lambda_k$ for the non-zero eigenvalues of $A^*A$, so that $2k \leq n$, \eqref{eqn:skewinequality} follows from the obvious
  $$
  \frac{1}{2k} \Big(\sum_j 2\lambda_j\Big)^2 \leq 2 \sum_j \lambda_j^2 \leq \frac{1}{2} \Big(\sum_j 2\lambda_j\Big)^2.
  $$
  Equalities hold if there is only one pair $k = 1$ or the maximal possible number $k = \lfloor \frac{n}{2} \rfloor$. When $n$ is even, the latter reduces to $A^*A = \lambda I_n$, so the assertion about the maximal dimension of $L$ follows from the first case with $p = q = n$. When $n$ is odd, however, we can only infer that $A^*A$ is unitary conjugate to $\diag(\lambda, \dots, \lambda, 0)$, so some other argument is needed.
  
  Suppose first that $n \geq 9$. By contradiction, suppose that $A$, $B$ are $n \times n$ skew-symmetric matrices with $n \geq 9$ odd, such that for every $t, s \in \C$ the matrix $(tA + sB)^*(tA+sB)$ has one positive eigenvalue of multiplicity $n-1$ (which again we take to be $1$ for $(t, s) = (1,0)$ or $(0,1)$). In particular, for every $t,s$ there is a subspace $V_{s,t} \subset \C^n$ of codimension $1$ where $(tA + sB)^*(tA+sB)|_{V_{s,t}} = \lambda I_{V_{s,t}}$. Let $W$ be the intersection $V_{1,0} \cap V_{0,1} \cap V_{1,1} \cap V_{1,i}$, which has at most codimension $4$. Extend an orthonormal basis of $W$ to one of $\C^n$, and write $v_1,\dots, v_n$ (resp. $w_1,\dots,w_n$) for the column vectors of $A$ (resp. $B$). By assumption, $v_1, \dots, v_{n-4}$ are orthonormal, and the same is true for $w_1, \dots, w_{n-4}$. We can repeat the steps as in \eqref{eqn:aij} to modify $A$ and $B$; since we cannot get to $v_1,\dots,v_{n-4},w_1,\dots,w_{n-4}$ forming a $(2n-8)$-tuple of orthogonal vectors, this forces $w_1 = e^{i\theta}v_1, \dots, w_{n-4} = e^{i\theta}v_{n-4}$. But then $e^{i\theta}A-B$ has at most rank $4$, hence so does $(e^{i\theta}A-B)^*(e^{i\theta}A-B)$; since this can only have rank $n-1$ or $0$, it must be $0$, hence $e^{i\theta}A = B$, and we are done.
  
  The lower dimensional cases must be treated separately. Suppose for example that $n = 5$. The locus of maximal holomorphic sectional curvature is given by
  $$
  V = \Big\{ A \in SS_5(\C) \cong \C^{10} \ : \ F(A) := \big(\trace(A^*A)\big)^2 - 4 \trace\big((A^*A)^2\big) =0 \Big\}.
  $$
  The isomorphism between the $5\times5$ skew symmetric complex matrices and $\C^{10}$ can be given, for example, by the following ordering:
  \begin{equation}\label{eqn:paramSS}
  SS_5(\C) \ni A = \begin{pmatrix}
                    0 & a_1 & a_2 & a_3 & a_4\\
                    -a_1 & 0 & a_5 & a_6 & a_7\\
                    -a_2 & -a_5 & 0 & a_8 & a_9\\
                    -a_3 & -a_6 & -a_8 & 0 & a_{10}\\
                    -a_4 & -a_7 & -a_9 & -a_{10} & 0
                   \end{pmatrix}.
  \end{equation}
  Remark that $U \in \SU(5)$ acts on $A \in V$ by $A \mapsto U^t A U$; this action is linear on $V$, so it descends to an action on the set of complex lines $\mathbb{P}V$. By Youla's decomposition, this action is transitive. To prove that no complex plane is contained in $V$, we can thus choose work locally around any preferred point. Our choice will be $A_0$ whose parametrization as in \eqref{eqn:paramSS} has $a_1 = a_8 = 1$ and 0 elsewhere. Since $V$ is defined by $F = 0$, any direction of a plane $L \subset V$ through $A_0$ must make the Levi form vanish, i.e. $(\partial \bar\partial F)(\xi) = 0$ for all $\xi \in T_{A_0}L$. Let us compute the quadratic form $Q = \partial \bar\partial F$. Since $\trace(A^*A) = 2\sum_j |a_j|^2$,
  \begin{align*}
  \dev{}{a_k} \dev{}{\bar{a_j}} \big( \trace(A^*A)\big)^2 &= \begin{cases}
                                                                     8 \bar{a_k}a_j            & \text{if } j \neq k\\
                                                                     8 |a_j|^2 + 4\trace(A^*A) & \text{if } j = k,
                                                                    \end{cases}\\
  \dev{}{a_k} \dev{}{\bar{a_j}} \big( \trace(A^*AA^*A)\big) &= -16 \trace\Big(\dev{A}{\bar{a_j}} \dev{A}{a_k} A^*A\Big),
  \end{align*}
  where in the last equality we have used that all the matrices involved are skew-symmetric. Specializing at $A = A_0$, the first line gives $8$ for $j \neq k \in \{1, 8\}$, and 0 for other $j \neq k$, 24 for $j = k \in \{1, 8\}$ and 16 otherwise. The second line give $0$ whenever $j \neq k$, 16 for $j = k \in \{4, 7, 9, 10\}$ and 32 otherwise: Indeed, $A_0^*A_0 = \diag(1,1,1,1,0)$ and $\partial{A}{a_j} \partial{A}{a_k}$ has no non-zero terms on the diagonal unless $j = k$. In this case, it has exactly two $-1$ on the diagonal, of which one is in position $(5,5)$ if and only if $j = k \in \{4,7,9,10\}$.
  
  The quadratic form $Q$ is the difference of the two quadratic forms above, and a straightforward computation proves that it is negative semi-definite with 5-dimensional kernel generated by $\langle e_4, e_7, e_9, e_{10}, e_1+e_8\rangle$, where $e_i$ corresponds to the $i$-th element of the canonical basis of $\C^{10}$. The direction $e_1 + e_8$ is the ``trivial one'', along multiples of $A_0$. It is thus enough to prove that there is no other line passing by $A_0$ on which $F$ vanishes. This follows from the straightforward computation giving: For all $a,b,c,d \in \C$,
  $$
  F(A_0 + ae_4 + be_7 + ce_9 + de_{10}) = -4\big(|a|^2 + |b|^2 + |c|^2 + |d|^2\big).
  $$
  The case $n = 7$ is essentially identical: in that case, $V$ is defined by the two equations
  \begin{align*}
  V = \Big\{ A \in SS_7(C) \cong \C^{21} \ : \ F(A) = \big(\trace(A^*A)\big)^2 - 6 \trace\big((A^*A)^2\big) & =0,\\
                                                     \big(\trace(A^*A)\big)^3 - 36 \trace\big((A^*A)^3\big) & =0\Big\},
  \end{align*}
  but the same proof gives that no complex plane is contained in the bigger subspace defined by $F = 0$ only: $Q$ is still negative semi-definite, with $7$-dimensional kernel composed by 1 ``trivial'' direction as above, plus 6 more along which $F < 0$. The same proof should also work in higher dimension, but we preferred to give a different, shorter one to avoid introducing heavy notations.
\end{proof}

\section{Proof of Theorem \ref{thm:classification}}\label{sec:proofclass}
The fact that for the stated $G$'s every minimum of the Morse function (in a connected component) is holomorphic (or Fuchsian for $G = \Sp(2n, \R)$) is due to Bradlow, García-Prada and Gothen (see \cite{BrGPGo06} and the references therein, where also the numbers of maximal connected components are computed). We now discuss the classification of maximal holomorphic representations $\rho \colon \Gamma_g \to G$. Let $f \colon \tilde \Sigma \cong \H_\R^2 \to Y = G/K$ be a $\rho$-equivariant holomorphic map. Fix a base point $\tx_0 \in \tilde \Sigma$; up to conjugation of $\rho$, we can suppose that $f(\tx_0) = eK$. Thanks to Lemma \ref{lemma:roydenequality}, $f$ is totally geodesic, so there exists a representation $\rho_{tot} \colon \SL_2(\R) \to G$ such that $f(\tx) = f(g \tx_0) = \rho_{tot}(g)\cdot K$. It is then easy to see that $\rho \colon \Gamma \to G$ must be of the form $\rho(\gamma) = \chi(\gamma) \rho_{tot}(\gamma)$, where $\chi \colon \Gamma_g \to Z_G(\rho_{tot}(\SL_2(\R))) \subset K$ takes values in the centralizer of the image of $\rho_{tot}$, so that
$$
\forall \tx = g\tx_0 \in \tilde \Sigma, \quad \rho(\gamma)f(\tx) = \chi(\gamma) \rho_{tot}(\gamma g) K = \rho_{tot}(\gamma g) \cdot K = f(\gamma \tx).
$$
To conclude the proof, we will do the following in each of the possible cases for $G$:
\begin{enumerate}
 \item Describe, thanks to Lemma \ref{lemma:curvatureequality}, the possible maps $f_* \colon T_{\tx_0}\tilde \Sigma \to T_{eK}Y$, corresponding to some Lie algebra homomorphism $f_* \colon \sl_2(\R) \to \g$.
 \item Choose a preferred element between these possibilities, and compute the corresponding Lie group homomorphism, as in Table \ref{tab:representations}.
 \item Describe the centralizer of the image of these homomorphism, in order to complete table \ref{tab:centralizers}.
 \item To rule out the remaining ambiguities, we have to check that the maximal compact subgroup $K$ acts transitively on the possible choices compatible with 1.
\end{enumerate}

$\bullet \quad G = \SU(p,q)$, $p \geq q$: In this case, the holomorphic tangent bundle $\p^{1,0}$ must be sent to something of the form $\begin{pmatrix}0&A\\0&0\end{pmatrix}$, where $A$ is a $p \times q$ matrix such that $A^*A = \lambda I_q$. In this case, it is clear that $K = S(U(p)\times U(q))$ acts transitively on the possible choices by $A \mapsto PAQ^*$. One preferred choice is $A = \begin{pmatrix} I_q\\0\end{pmatrix}$. In that case, the Lie algebra morphism is clearly the one shown in Table \ref{tab:representations}, which is induced by the linear Lie groups homomorphism $\SU(1,1) \to \SU(p,q)$ shown in the same table. A straightforward check gives that the centralizer of its image is given by
$$
Z = Z_G(f_*(\sl_2(\R))) = \Bigg\{ \begin{pmatrix}U\\&F\\&&U\end{pmatrix}, \  U \in U(q), F \in U(p-q), \det(U)^2 \det F = 1 \Bigg\}.
$$
When $p = q$, there is no $F$, hence this group is isomorphic to $\SU(q) \rtimes \Z/2\Z$. When $p > q$, the determinant of $F$ is uniquely determined, so $Z$ is isomorphic to $U(q) \times \SU(p-q)$. Because of that, and since $\Hom(\Gamma_g, \GL(q,\C))$ and hence $\Hom(\Gamma_g, U(q))$ are connected, $\Hom(\Gamma_g, Z)$ has $2^{2g}$ components in the former case, and it is connected in the latter.

$\bullet \quad G = \Sp(2n,\R)$, $n \geq 3$: Again by Lemma \ref{lemma:curvatureequality}, $\p^{1,0} \mapsto W = \Big\{\begin{pmatrix}A&iA\\iA&-A\end{pmatrix}\Big\}$, where $A$ is complex symmetric such that $A^*A = \bar{A}A = \lambda I_n$, for some $\lambda > 0$. As a preferred choice, we can take $A = I_n$. This corresponds to the Lie algebra morphism as in Table \ref{tab:representations}, which again corresponds trivially to a linear Lie group homomorphism $\SL_2(\R) \to \Sp(2n,\R)$. The centralizer of the image is readily computed to be of the form $\bigg\{\begin{pmatrix}Q\\&Q\end{pmatrix},\ Q \in O(n)\bigg\}$. Hence, to see that $K$ acts transitively on the set of complex lines in $W$, and since the Lie algebra is
$$
\k = \bigg\{\begin{pmatrix}B&C\\-C&A\end{pmatrix},\ B^t=-B, C^t=C\bigg\},
$$
it is enough to consider adjunction by elements of the form $\exp k$ where $k = \begin{pmatrix}0&C\\-C&0\end{pmatrix}$. The usual formula $\Ad_{\exp} = e^{\ad}$ gives
$$
\Ad_{\exp k}\begin{pmatrix}
             I & iI\\
	     iI & -I
            \end{pmatrix}
         =
            \begin{pmatrix}
             \exp(2iB)&i\exp(2iB)\\
             i\exp(2iB)&-\exp(2iB)
            \end{pmatrix}.
$$
When $B$ varies across all symmetric real matrices, $\exp(2iB)$ gives all the unitary symmetric ones, that proves the transitivity. Finally, the count of the connected components follows from $\big|\Hom\big(\Gamma_g, O(n)\big)\big| = 2^{2g+1}$.

$\bullet \quad G = \SO_0(n,2)$, $n \geq 4$: In this case Lemma \ref{lemma:curvatureequality} gives $\p^{1,0} \mapsto \bigg\{\begin{pmatrix}0_n&v&iv\\v^t&0&0\\iv^t&0&0\end{pmatrix}\bigg\}$, where $v \in U(1) \cdot \R^n$. It is clear in this case that the maximal compact acts transitively on the complex lines therein, since $O(n)$ does on the real lines in $\R^n$. A preferred choice is $v = e_1$, the first vector of the canonical base of $\R^n$. This corresponds to the Lie algebra map $f_*$ in Table \ref{tab:representations}. A long but easy computation using $\exp \circ f_* = \rho_{tot} \circ \exp$ gives the Lie group homomorphism
$$
\begin{pmatrix}
 \alpha&\beta\\
 \bar\beta&\bar\alpha
\end{pmatrix}
 \textrightarrow{\rho_{tot}}
\begin{pmatrix}
 2|\beta|^2+1 & 0 & \cdots & 0 & 2 \Re(\alpha\bar\beta) & 2 \Im(\alpha \bar\beta)\\
 0 & 1 & 0 & \vdots & 0  & 0\\
 \vdots & 0 & \ddots & 0 & \vdots & \vdots\\
 0 & \cdots & 0 & 1 & 0 & 0\\
 2 \Re(\alpha\beta) & 0 & \cdots & 0  & \Re(\alpha^2 + \beta^2) & \Im(\alpha^2 + \beta^2)\\
 -2 \Im(\alpha\beta) & 0 & \cdots & 0 & -\Im(\alpha^2 + \beta^2) &\Re(\alpha^2-\beta^2)
\end{pmatrix}.
$$
The centralizer of its image is $Z = \Big\{\diag\big(\det P, P, \det P, \det P\big),\ P \in O(n-1) \Big\}$. Again, this implies that $\Hom(\Gamma_g, Z)$ has $2^{2g+1}$ connected components.

$\bullet \quad \SO^*(2n)$: Here $\p^{1,0} \mapsto W' = \Big\{\begin{pmatrix}A&iA\\iA&-A\end{pmatrix}\Big\}$, where this time $A$ is complex \emph{skew-symmetric}, such that $A^*A = -\bar{A}A = \lambda I_n$, if $n$ is even, and has $n-1$ equal eigenvalues otherwise. A preferred choice for even $n$ is $A = J = \begin{pmatrix} 0 & I_{n/2}\\-I_{n/2}&0\end{pmatrix}$, and for odd $n$ one adds one row and one column of zeros. The corresponding Lie algebra morphism is the one shown in Table \ref{tab:representations}. In this case, the corresponding Lie group homomorphism involves high degree polynomials, so we restrain from writing it down, as it would not be very informative. The centralizer of its image is isomorphic to the symplectic group $\Sp(n)$, and explicitly its Lie algebra is the first factor in the following decomposition of $\k$:
\begin{equation*}
\hspace{-0.6cm}
\k = \Bigg\{\begin{pmatrix}
             B&C&D&E\\
             -C&B&E&-D\\
             D&-E&B&C\\
             -E&D&-C&B
            \end{pmatrix},
	\begin{array}{l} B^t=-B,\\ C^t=C,\\ D^t=D,\\ E^t=E\end{array}
    \Bigg \}
   \oplus
    \Bigg\{\begin{pmatrix}
             B&C&D&E\\
             C&-B&-E&D\\
             -D&-E&B&C\\
             E&-D&C&-B
            \end{pmatrix},
         \begin{array}{l} B^t=-B,\\ C^t=-C,\\ D^t=D,\\ E^t=-E\end{array}
    \Bigg \}.
\end{equation*}
As in the case of $\Sp(2n,\R)$, using the formula $\Ad(\exp) = e^{\ad}$ for element in the second factors only, and applying it to $J$ as above, a long computation proves that the action is transitive. Furthermore, the space $\Hom(\Gamma_g, \Sp(n))$ is connected, since $\Hom(\Gamma_g, \Sp(n,\C))$ is (see \cite{Ra75}, Proposition 4.2). The analysis for odd $n$ is more cumbersome but it follows the same ideas, so it will be omitted.

\section{Other Milnor--Wood inequalities}\label{sec:other}

The study of the fundamental group of a Kähler (or projective) manifold is generally carried through under some hypothesis of negative curvature, or, from the algebraic point of view, positivity of the canonical bundle. Indeed, at least in the algebraic case, one can always arrange things in order to work with a general type variety (see, for example, the introduction of \cite{KoMa10}). On the other hand, non-negativity of the curvature is very special, as our proof of the Milnor--Wood inequality underlines: By Proposition \ref{prop:MWinequality}, if $\Ric(X) \geq 0$ and $\rho$ is $\pm$-holomorphic, then $\tau(\rho) = E(\rho) = 0$, suggesting that the Toledo invariant should be trivial for these Kähler groups. This is indeed the case: If one assumes further that $X$ has non-negative holomorphic sectional curvature, then one can deform any $\rho$ to a $\C$-VHS (see \cite{Si92}), and apply Theorem 2 in \cite{Ro80} to the period mapping (that is holomorphic, with values in the period domain that has negative holomorphic sectional curvature, but is non-Kähler) to obtain $\tau \equiv 0$. However, under this additional hypothesis, much stronger results are known on $X$ and its fundamental group, see \cite{DePeSc94}. Using a recent result by Biswas and Florentino \cite{BiFl14} one can prove that the Toledo invariant is trivial for a class of Kähler manifolds containing that of non-negative Ricci curvature:

\begin{theorem}[\cite{BiFl14}]
 Suppose that $X$ is a compact Kähler manifold and that $\Gamma = \pi_1(X)$ is virtually nilpotent. Let $G^c$ be a complex Lie group, and $(\calE, \Phi)$ be a $G^c$-Higgs bundle. Then
 $$
 \lim_{t \to 0} (\calE, t\Phi) = (\calE, 0).
 $$
 This gives a homotopy retraction of $\M(X, G^c)$ to $\M(X, H)$, where $H$ is the chosen maximal compact subgroup of $G^c$.
\end{theorem}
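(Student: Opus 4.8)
The plan is to reduce the assertion, via the non-abelian Hodge correspondence, to the elementary case of representations into a complex torus, exploiting that the $\C^*$-action $(\calE,\Phi)\mapsto(\calE,t\Phi)$ on the Dolbeault moduli space commutes with pullback along covering maps.

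First I would fix the inputs. Up to $S$-equivalence we may assume $(\calE,\Phi)$ polystable, so it is the Higgs bundle attached by non-abelian Hodge theory \cite{Si92} to a semisimple representation $\rho\colon\pi_1(X)\to G^c$; in particular the Zariski closure $R:=\overline{\rho(\pi_1(X))}$ is reductive. By Simpson's theory of the $\C^*$-action, the limit $\lim_{t\to 0}(\calE,t\Phi)$ exists in $\M(X,G^c)$, and I will also use that non-abelian Hodge theory is compatible with pullback along a finite \'etale cover $p\colon X'\to X$ (pull back the harmonic metric), that such a pullback commutes with rescaling the Higgs field, and that it is faithful enough to detect the vanishing of a Higgs field and the isomorphism class of the underlying bundle.

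The heart of the argument is a rigidity statement for $R$. Since $\pi_1(X)$ is virtually nilpotent, so is the image $\rho(\pi_1(X))$, hence so is its Zariski closure $R$; being also reductive, $R$ has identity component $R^0$ a connected reductive nilpotent group, i.e.\ a torus $T^c$, with $R/R^0$ finite. Let $p\colon X'\to X$ be the finite \'etale cover corresponding to the finite-image homomorphism $\pi_1(X)\to R\to R/R^0$. Then $\rho|_{\pi_1(X')}$ takes values in $T^c$, so $p^*(\calE,\Phi)$ splits as a direct sum of rank-one Higgs bundles $\bigoplus_i(\calL_i,\phi_i)$ with $\calL_i\in\Pic^0(X')$; in particular $\calE$ is itself polystable of degree zero. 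For a rank-one Higgs bundle the $\C^*$-flow fixes the underlying line bundle and merely rescales the Higgs field, so $\lim_{t\to 0}p^*(\calE,t\Phi)=\bigoplus_i(\calL_i,0)=(p^*\calE,0)$. Since the flow commutes with $p^*$ and $p^*$ detects both the underlying bundle and the vanishing of the Higgs field, it follows that $\lim_{t\to 0}(\calE,t\Phi)=(\calE,0)$, which lies in the zero-Higgs-field locus $\M(X,H)\subset\M(X,G^c)$.

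Finally, since every $\C^*$-limit is now a point with $\Phi=0$, the fixed locus of the $\C^*$-action on $\M(X,G^c)$ is exactly this zero-Higgs-field locus, which is $\M(X,H)$; the map $[0,1]\times\M(X,G^c)\to\M(X,G^c)$, $(s,(\calE,\Phi))\mapsto(\calE,s\Phi)$, extended at $s=0$ by the limit above, is then the asserted deformation retraction, the only point needing care being continuity at $s=0$, which is the standard semiprojectivity input from Simpson's theory. I expect the main obstacles to be exactly these ``soft'' analytic inputs in the Kähler (rather than projective) setting — existence and continuity of the $\C^*$-limits and the compatibility of the entire picture with the finite cover $p$ — whereas the algebraic core (virtually nilpotent plus reductive $\Rightarrow$ torus up to finite index $\Rightarrow$ trivial $\C^*$-flow) is short.
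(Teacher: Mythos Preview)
The paper does not prove this theorem at all: it is stated as a result of Biswas and Florentino \cite{BiFl14} and used as a black box to deduce Proposition~\ref{prop:nilpotent}. So there is no proof in the paper to compare your proposal against.

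That said, your outline is a reasonable sketch of the argument one would expect. The algebraic core is correct: if $\Gamma_0\subset\Gamma$ is nilpotent of finite index, then $\overline{\rho(\Gamma_0)}$ is nilpotent and of finite index in $R=\overline{\rho(\Gamma)}$, hence $R^0\subset\overline{\rho(\Gamma_0)}$ is a connected reductive nilpotent group, i.e.\ a torus. Passing to the finite cover $X'$ associated to $\Gamma\to R/R^0$ then reduces to the abelian case, where the limit is obvious. The steps you flag as delicate---existence and continuity of $\C^*$-limits in the Kähler (not just projective) setting, and compatibility of the non-abelian Hodge correspondence and the $\C^*$-action with finite étale pullback---are indeed where the real content lies, and you would need to cite or supply them carefully. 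One small point: rather than saying $p^*$ ``detects the vanishing of the Higgs field,'' it is cleaner to note that on the finite cover the entire flow $t\mapsto(p^*\calE,t\,p^*\Phi)$ is constant in $\M(X',G^c)$ (the underlying bundle never moves), and then use that $p^*$ is finite-to-one on moduli to conclude downstairs.
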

With this result at hand, we can easily prove the following:
\begin{prop}
 Let $X$ be a compact Kähler manifold such that $\Ric(X) \geq 0$, or, more generally, such that $\Gamma = \pi_1(X)$ is nilpotent. Then, for every Hermitian Lie group $G$ and any $\rho \colon \Gamma \to G$, $\tau(\rho) = 0$.
\end{prop}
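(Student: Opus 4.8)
The plan is to combine the deformation invariance of the Toledo invariant (Section~\ref{sec:definitions}) with the Biswas--Florentino theorem, exploited through the $\C^*$-action on Higgs bundles. First I would make the usual reductions. Since $\Ric(X) \geq 0$ forces $\pi_1(X)$ to be virtually nilpotent by Milnor \cite{Mi58}, we may assume $\Gamma$ is virtually nilpotent. By additivity of $\tau$ over the almost-simple factors of $G$ we may assume $G$ is simple of Hermitian type, and, passing to its adjoint form (which changes neither $Y = G/K$ nor $\tau$), that $G$ is linear; let $G^c$ denote its complexification, with inclusion $\iota \colon G \hookrightarrow G^c$. Finally, $\tau(\rho) = \tau(\rho^{ss})$ because $\rho$ deforms to $\rho^{ss}$, so we may assume $\rho$ is semisimple; fix a harmonic $\rho$-equivariant map and let $(\calE, \Phi)$ be the associated polystable $G$-Higgs bundle.

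Next I would run the $\C^*$-flow. For each $t \in \C^*$ the pair $(\calE, t\Phi)$ is again a polystable $G$-Higgs bundle, since polystability only involves $\Phi$-invariant reductions and these do not depend on $t$; hence, by the non-abelian Hodge correspondence, it corresponds to a semisimple representation $\rho_t \colon \Gamma \to G$ with $\rho_1 = \rho$, and $t \mapsto [\rho_t]$ is a continuous map $\C^* \to \M$. By properness of the $G$-Hitchin fibration the limit $[\calE_0, \Phi_0] = \lim_{t\to 0} [\calE, t\Phi]$ exists in the moduli space of $G$-Higgs bundles. Extending the structure group by $\iota$ turns $(\calE, t\Phi)$ into the $G^c$-Higgs bundle attached to $\iota \circ \rho$, and the Biswas--Florentino theorem says its $t \to 0$ limit has vanishing Higgs field; since extension of structure group leaves the Higgs field unchanged and commutes with the flow, this forces $\Phi_0 = 0$. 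Thus $[\calE_0, 0]$ corresponds to a semisimple representation $\rho_0$, necessarily with image in the maximal compact $K \subset G$.

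It then remains to conclude. By Lemma~\ref{lemma:toledohiggs}, $\tau(\rho_0) = \frac{1}{n}\int_X \bigscal{0}{[0, iZ]}\, \frac{\omega_X^n}{n!} = 0$ (equivalently: $\rho_0(\Gamma) \subseteq K$ fixes $eK \in Y$, so the constant map is $\rho_0$-equivariant and pulls $\omega_Y$ back to $0$). On the other hand $\tau$ is locally constant on $\M$, and $\{[\rho_t] : t \in \C^*\} \cup \{[\rho_0]\}$ is connected, being a connected set with one of its limit points added; hence $[\rho]$ and $[\rho_0]$ lie in the same connected component of $\M$, and $\tau(\rho) = \tau(\rho_0) = 0$.

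The main obstacle is the transfer of the Biswas--Florentino limit from $G^c$ down to $G$: one must check that the limit of $[\calE, t\Phi]$ taken in the $G$-Higgs moduli space is carried, by the proper morphism to the $G^c$-Higgs moduli space, to the limit taken there (this follows from continuity of the morphism together with the existence of both limits), and that a $G$-Higgs bundle whose $G^c$-extension has zero Higgs field has zero Higgs field itself, which is immediate since the extension does not alter $\Phi$. Everything else is a routine repackaging of facts already invoked in this paper.
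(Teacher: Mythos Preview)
Your proposal is correct and follows essentially the same route as the paper: reduce via Milnor to virtually nilpotent $\Gamma$, run the $\C^*$-flow on Higgs bundles, apply the Biswas--Florentino theorem to see that the limit has vanishing Higgs field (hence corresponds to a representation into $K$), and conclude by deformation invariance of $\tau$. The paper's version is terser---it simply asserts that $\M(X,G)$ sits as a closed, $\C^*$-invariant subspace of $\M(X,G^c)$ so that the Biswas--Florentino retraction restricts---whereas you spell out the transfer between the $G$- and $G^c$-moduli spaces more carefully, but the substance is the same.
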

\begin{proof}
 The fact that compact manifolds with non-negative Ricci curvature have virtually nilpotent fundamental groups is due to Milnor, see \cite{Mi58}. Given $G$, the moduli space of representations $\M(X, G)$ is a closed submanifold of the moduli space $\M(X, G^c)$ of representation in the complexification $G^c$ of $G$. Furthermore, this subspace is preserved by the $\C^*$-action (see, for example, \cite{Xi00}, Proposition 3.1, for a proof of the case $G = U(p,q)$ and $\dim(X) = 1$, but the result is true in general). As a consequence, every representation in this space is deformable (through representations taking values in $G$) to a unitary one, that is, one such that $\tau(\rho) = E(\rho) = 0$.
\end{proof}
\begin{cor}
 No cocompact lattice in a Hermitian Lie group $\Gamma < G'$ is solvable.
\end{cor}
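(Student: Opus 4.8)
The plan is to argue by contradiction, producing from a hypothetical solvable cocompact lattice a representation whose Toledo invariant is at once zero and strictly positive. So suppose $\Gamma < G'$ is a cocompact lattice in a semisimple Lie group of non-compact Hermitian type, and assume $\Gamma$ is solvable. Let $Y' = G'/K'$ be the associated Hermitian symmetric space — a product of irreducible non-compact Hermitian symmetric spaces, hence a complete Kähler manifold, with invariant Kähler form $\omega_{Y'}$. By Selberg's lemma I would first pass to a torsion-free finite-index subgroup of $\Gamma$ (still a cocompact lattice, still solvable), so that I may assume $\Gamma$ acts freely on $Y'$. Then $X := \Gamma\backslash Y'$ is a compact Kähler manifold with universal cover $\tX = Y'$ and $\pi_1(X) \cong \Gamma$; write $\omega_X$ for the Kähler form descended from $\omega_{Y'}$ and $n = \dim_\C X$.

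Next I would invoke the theorem of Delzant \cite{De10} to the effect that a solvable Kähler group is virtually nilpotent. Applied to $\Gamma = \pi_1(X)$, it gives that $\Gamma$ is virtually nilpotent, so Proposition \ref{prop:nilpotent} applies — taken with the Hermitian Lie group $G'$ itself — and forces $\tau(\rho) = 0$ for every $\rho\colon\Gamma\to G'$; in particular for the tautological inclusion $\rho_0\colon\Gamma\hookrightarrow G'$. On the other hand $\tau(\rho_0)$ can be read off directly: the identity map $f = \Id\colon \tX = Y' \to Y' = G'/K'$ is $\rho_0$-equivariant and satisfies $f^*\omega_{Y'} = \omega_{Y'}$, which descends to $\omega_X$ on $X$, so that
\[
\tau(\rho_0) \;=\; \frac{1}{n!}\int_X f^*\omega_{Y'}\wedge\omega_X^{\,n-1} \;=\; \frac{1}{n!}\int_X \omega_X^{\,n} \;=\; \Vol(X)\;>\;0 .
\]
This contradicts $\tau(\rho_0)=0$, so $\Gamma$ cannot have been solvable.

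The only genuine external ingredient — and the step I expect to be the real obstacle if one wanted a self-contained argument — is Delzant's theorem, which is precisely what upgrades ``solvable'' to the ``virtually nilpotent'' hypothesis under which Proposition \ref{prop:nilpotent} (via the Biswas--Florentino retraction) bites; without it, the elementary vanishing argument above only yields the statement for virtually nilpotent lattices. Everything else is routine: the reduction to the torsion-free case, the fact that compact quotients of Hermitian symmetric spaces are Kähler, and the positivity $\tau(\rho_0)>0$, which is automatic since $\omega_X^{\,n}$ is a volume form on the compact manifold $X$. (One can of course bypass Delzant entirely: for $\rk(G') = 1$ a cocompact lattice is word-hyperbolic, hence virtually cyclic if solvable, which is impossible for a lattice in a contractible symmetric space of real dimension $\geq 2$; in higher rank one may appeal to the Margulis normal subgroup theorem, or simply to the Borel density theorem, which makes $\Gamma$ Zariski dense in the non-solvable group $G'$. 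But the uniform route through the vanishing of $\tau$ is the one signalled by Proposition \ref{prop:nilpotent}, and is the one I would write up.)
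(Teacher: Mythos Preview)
Your proof is correct and follows essentially the same route as the paper: invoke Delzant's theorem to pass from solvable to virtually nilpotent, apply Proposition~\ref{prop:nilpotent} to force $\tau\equiv 0$, and contradict this with the inclusion $\Gamma\hookrightarrow G'$, whose Toledo invariant is $\Vol(\Gamma\backslash G'/K')>0$. Your write-up is in fact slightly more careful than the paper's, in that you make explicit the passage to a torsion-free finite-index subgroup via Selberg so that $X$ is genuinely a manifold.
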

\begin{proof}
 By a Theorem of Delzant \cite{De10}, a (virtually) solvable Kähler group is virtually nilpotent, so the preceding corollary applies. The inclusion $\Gamma \to G'$, then, gives a representation with Toledo invariant $\tau = \Vol(\Gamma \backslash G'/K')$, a contradiction.
\end{proof}

We conclude this section with some discussion on the restrictiveness of the hypothesis ``being deformable to a $\pm$-holomorphic map''. In a series of papers (see \cite{BrGPGo06} for an overview), Bradlow, García-Prada and Gothen proved that for most simple Hermitian Lie groups $G$ every representation of a surface group can be deformed to a $\pm$-holomorphic one (this is true for $G = \U(p,q)$, $G = \SO^*(2n)$ or $G = \SO(n,2)$, if $n \geq 4$). This motivated us to introduce the condition of being deformable to a $\pm$-holomorphic representation, as possibly not too restrictive. They prove this result (see, for example, \cite{BrGPGo03} for $\U(p,q)$) by considering a minimum of the energy functional $E$, which exists because of properness, and proving, thanks to a formula by Hitchin (see \cite{Hi92}, \S 9) and Riemann-Roch, that simple minima must be $\pm$-holomorphic. Properness and Hitchin's formula hold in higher dimension as well, the latter being proved in \cite{Sp14}, Theorem 7.6; however, although the quest for similar topological results was part of our motivation in proving that theorem, the proof by Bradlow--García-Prada--Gothen does not carry through in higher dimension, the key point being that for manifolds of general type $\chi(\calO_X) < 0$ if $\dim(X) = 1$ and $\chi(\calO_X) > 0$ if $\dim(X) > 1$. Remark that proving this result for stable Higgs bundles would suffice, since the general polystable case would follow (the Toledo invariant is additive on direct sums, and maximality implies that all signs must agree). Remark, however, that in higher dimension it is \emph{not} true that every representation may be deformed to a $\pm$-holomorphic one: The example studied by Kim, Klingler and Pansu \cite{KiKlPa12} gives a locally rigid representation $U(n,1) \to U(2n,2)$ such that $\tau(\rho) = 0$ but that is not unitary, hence not $\pm$-holomorphic. However, this is the direct sum of something holomorphic and something anti-holomorphic, and this phenomenon cannot happen for maximal representation.

Finally, let us link the (deformability to) $\pm$-holomorphic representation to dimensional reduction. Suppose that $X$ is projective, and even that $X$ is Kähler-Einstein, so that the Kähler class is as in \eqref{eqn:KE}. Then, taking $n-1$ hyperplane sections in general position, we obtain a smooth curve $i \colon \Sigma \subset X$. The inclusion is submersive on fundamental groups, $i_* \colon \pi_1(\Sigma) \twoheadrightarrow \Gamma = \pi_1(X)$, and gives an embedding of representation varieties. Up to some constants, our definition of the Toledo invariant of $\rho \colon \Gamma \to G$ is just the Toledo invariant of the induced representation $i^*\rho \colon \pi_1(\Sigma) \to G$. The same is true for $E(\rho)$, so one might hope to restrict a minimum of $E$ to obtain a minimum on the bigger space $\Hom(\pi_1(\Sigma), G)$ (this is actually equivalent to our thesis: If a minimum in $\Hom(\Gamma, G)$ is holomorphic, then the restriction must be holomorphic, hence a minimum in $\Hom(\pi_1(\Sigma),G)$). The exact relations are:
$$
\tau(\rho) = \frac{1}{n!} \bigg(\frac{4\pi}{m \cdot c_X}\bigg)^{n-1} \tau(i^*\rho), \quad E(\rho) = \frac{1}{(n-1)!} \bigg(\frac{4\pi}{m\cdot c_X}\bigg)^{n-1} E(i^*\rho).
$$
Remark that in particular for an $n$-dimensional locally symmetric $X$ (for which $c_X$ is an integer), $\tau(\rho) \in \Q \pi^n$ (and since $\tau(i^*\rho) \in 4\Z \pi$, this can be used to study the divisibility of $\tau(\rho)$). Note also that a maximal representation of $\Gamma$ will never restrict to a maximal representation of $\pi_1(\Sigma)$. The short motivation for this is that, by \cite{BuIoWi10}, maximal representations are faithful and discrete, hence this would imply that $\pi_1(\Sigma) \cong \Gamma$. But we can be more explicit: Suppose that $X$ is a compact quotient of the complex 2-ball $\B^2$, let $\rho \colon \Gamma \to G$ be a representation, and suppose that $\Sigma \subset X$ is cut out by the very ample divisor $K_X$. Then by Riemann--Roch for surfaces, $\chi(\Sigma) = -K_X^2 = -\frac{9}{8 \pi^2} \Vol(X)$ (recall that $c_X = 3$ in this case). The Milnor-Wood inequality of Proposition \ref{prop:MWinequality} (that in this case agrees with the one by Burger-Iozzi) gives $|\tau(\rho)| \leq \rk(G) \Vol(X)$. However, applying the classical Milnor-Wood inequality to $i^*\rho$, one obtains
$$
\big|\tau(\rho)\big| = \frac{2\pi}{3}|\tau(i^*\rho)| \leq \frac{8\pi^2}{3}\rk(G) |\chi(\Sigma)| = 3 \rk(G) \Vol(X).
$$
In particular, maximal representations of $\Gamma < \SU(2,1)$ restrict to representations of $\pi_1(\Sigma)$ having Toledo invariant equal to one third of the maximal Toledo invariant $\tau_{\max} = 2\pi \rk(G) |\chi(\Sigma)|$.

\bibliographystyle{alpha}
\bibliography{refs}

\end{document}